\tikzset{negated/.style={
        decoration={markings,
            mark= at position 0.5 with {
                \node[transform shape] (tempnode) {$\backslash$};
            }
        },
        postaction={decorate}
    }
}
\newtheorem{theorem}{Theorem}[section]
\newtheorem{proposition}[theorem]{Proposition}
\newtheorem{lemma}[theorem]{Lemma}
\newtheorem{alphatheorem}{Theorem}
\theoremstyle{definition}
\newtheorem{remark}[theorem]{Remark}
\newtheorem*{remark*}{Remark}
\newtheorem{question}[theorem]{Question}
\renewcommand{\Re}{\operatorname{Re}}
\renewcommand{\Im}{\operatorname{Im}}
\newcommand{\supp}%
{\operatorname{supp}}
\newcommand{\fp}[1]{\left\{ #1 \right\} }
\newcommand{\ip}[1]{\left\lfloor #1 \right\rfloor }
\newcommand{\fpnormal}[1]{\{ #1 \} }
\newcommand{\ipnormal}[1]{\lfloor #1 \rfloor }
\newcommand{\fpa}[1]{\left\lVert #1 \right\rVert_{\mathbb{R}/\mathbb{Z}}}
\newcommand{\floor}[1]{\left\lfloor #1 \right\rfloor}
\newcommand{\bra}[1]{\left(#1\right)}
\newcommand{\braBig}[1]{\Big( #1 \Big)}
\renewcommand{\tilde}{\widetilde}
\renewcommand{\bar}{\overline}
\newcommand{\nint}[1]{\left\lfloor #1 \right\rceil}
\newcommand{\abs}[1]{\left|#1\right|}
\newcommand{\set}[2]{\left\{ #1 \ \middle| \ #2 \right\} }
\newcommand{\ceil}[1]{\left\lceil #1 \right\rceil}
\renewcommand{\a}{\alpha}
\renewcommand{\b}{\beta}
\newcommand{\NN}{\mathbb{N}}
\newcommand{\QQ}{\mathbb{Q}}
\newcommand{\ZZ}{\mathbb{Z}}
\newcommand{\RR}{\mathbb{R}}
\newcommand{\CC}{\mathbb{C}}
\newcommand{\cC}{\mathcal{C}}
\newcommand{\cN}{\mathcal{N}}
\newcommand{\cO}{\mathcal{O}}
\definecolor{fresh}{HTML}{2bb101}
\definecolor{checked}{HTML}{1e5e06}
\definecolor{double}{HTML}{5E3800}
\definecolor{external}{HTML}{a81a78}
\definecolor{later}{HTML}{0410ff}
\definecolor{minor-rev}{HTML}{d96a09}
\definecolor{major-rev}{HTML}{c90000}
\definecolor{skip}{HTML}{ffffff}%
\definecolor{normal}{HTML}{000000}%
\definecolor{fresh}{HTML}{000000}
\definecolor{checked}{HTML}{000000}
\definecolor{double}{HTML}{000000}
\definecolor{external}{HTML}{000000}
\definecolor{later}{HTML}{000000}
\definecolor{minor-rev}{HTML}{000000}
\definecolor{major-rev}{HTML}{000000}
\renewcommand{\subset}{\subseteq}
\renewcommand{\leq}{\leqslant}
\renewcommand{\geq}{\geqslant}
\newcommand*\patchAmsMathEnvironmentForLineno[1]{%
  \expandafter\let\csname old#1\expandafter\endcsname\csname #1\endcsname
  \expandafter\let\csname oldend#1\expandafter\endcsname\csname end#1\endcsname
  \renewenvironment{#1}%
     {\linenomath\csname old#1\endcsname}%
     {\csname oldend#1\endcsname\endlinenomath}}%
\newcommand*\patchBothAmsMathEnvironmentsForLineno[1]{%
  \patchAmsMathEnvironmentForLineno{#1}%
  \patchAmsMathEnvironmentForLineno{#1*}}%
\newcounter{claimcounter}
\newenvironment{claim*}{\refstepcounter{claimcounter}{\vspace{3pt}\par\noindent\textit{Claim:}}}{\vspace{3pt}}
\newenvironment{claimproof}[1]{\par\noindent\textit{Proof:}\space#1}{\hfill $\triangle$ \vspace{3pt}}
\newcommand{\Tr}{\operatorname{Tr}}
\newcommand{\Norm}{\operatorname{N}}
\begin{document}
\author[J.\ Byszewski ]{Jakub Byszewski}
\address[J.\ Byszewski]{Faculty of Mathematics and Computer Science,
Jagiellonian University\\
 \L{}ojasiewicza 6\\
30-348 Krak\'{o}w, Poland}
\email{jakub.byszewski@uj.edu.pl}

\author[J.\ Konieczny ]{Jakub Konieczny}
\address[J.\ Konieczny]{Universit\'e Claude Bernard Lyon 1, CNRS UMR 5208, Institut Camille Jordan,
F-69622 Villeurbanne Cedex, France}
\email{jakub.konieczny@gmail.com}

\title[{Pisot numbers, Salem numbers, and generalised polynomials}]{Pisot numbers, Salem numbers, \\ and generalised polynomials}

\date{\today}
\begin{abstract}
We study sets of integers that can be defined by the vanishing of a generalised polynomial expression. We show that this includes sets of values of  linear recurrent sequences of Salem type and some linear recurrent sequences of Pisot type. To this end, we introduce the notion of a generalised polynomial on a number field. We establish a connection between the existence of  generalised polynomial expressions for  sets of values of linear recurrent sequences and for subsemigroups of multiplicative groups of number fields.
\end{abstract}

\keywords{Pisot numbers, Salem numbers, generalised polynomials, bracket words, linear recurrent sequences, $S$-unit equation, Skolem--Mahler--Lech theorem}
\subjclass[2020]{
Primary:  11R06, 11J54,
Secondary: 11D61, 11J87
}	

\maketitle 

\section{Introduction}\label{sec:Intro}

Generalised polynomials are expressions built up from ordinary polynomials with the use of the integer part function, addition, and multiplication. In contrast with ordinary polynomials, generalised polynomial sequences can be bounded or even finitely-valued without being constant. For instance, for any irrational $\alpha \in(0,1)$ and any real $\beta$, the generalised polynomial map $g$ given by
\begin{equation}\label{eq:Int:sturm}
	g(n) = \floor{\a (n+1) + \b} - \floor{\a n + \b}
\end{equation}
defines a Sturmian sequence, which takes on only the values $0$ and $1$, with density $1-\alpha$ and $\alpha$, respectively.  
We define generalised polynomial sets to be the level sets of such maps. Equivalently, a generalised polynomial set $E\subset \ZZ$ is a set such that the characteristic function $1_E \colon \ZZ \to \{0,1\}$ is a generalised polynomial map.

It turns out that some sets of arithmetical or combinatorial interest are generalised polynomial sets. One example is the set of Fibonacci numbers, in which case an appropriate generalised polynomial can be constructed using the relation between the Fibonacci numbers and the golden mean together with some classical properties of continued fractions. It is a difficult problem to determine the extent to which this generalises to sequences $(n_i)_{i=0}^\infty$ that  satisfy a linear  recurrence 
\begin{equation}\label{eq:int:lin-rec}
	n_{i+m}=\sum_{j=0}^{m-1} a_{j} n_{i+j}, \quad i\geq 0,
\end{equation} 
for some $a_0,\ldots,a_{m-1} \in \ZZ$. One result in this direction concerns linear recurrent sequences whose characteristic polynomial is the minimal polynomial of a Pisot number. Recall that the characteristic polynomial of the recurrence \eqref{eq:int:lin-rec} is $X^m-\sum_{j=0}^{m-1} a_j X^j$, a (Galois) conjugate of an algebraic number $\beta$ is any root of the minimal polynomial of $\beta$ over $\QQ$, and a \emph{Pisot number} (or a Pisot--Vijayaraghavan number) is a real algebraic number $\beta$ such that $\beta > 1$, but all conjugates $\alpha$ of $\beta$ except for $\beta$ itself satisfy $\abs{\alpha} < 1$. An algebraic number is a \emph{unit} if both the number and its reciprocal are algebraic integers. An algebraic number is \emph{totally real} if all of its conjugates are real. The Dirichlet's unit theorem implies that for a real algebraic number $\beta$, the group of units $\cO_{\QQ(\b)}^*$ in $\QQ(\beta)$ has rank $1$ if and only if $\beta$ is either quadratic, or cubic and not totally real. The following theorem has been proved in many cases in \cite[Thm.\ B]{ByszewskiKonieczny-2018-TAMS} and  in full generality in \cite{AdamczewskiKonieczny}.

\begin{theorem}\label{thm:Pisot}
	Let $\beta$ be a Pisot unit such that $\cO_{\QQ(\b)}^*$ has rank $1$  and let $(n_i)_{i=0}^\infty$ be an integer-valued linear recurrent sequence with characteristic polynomial the minimal polynomial of $\beta$. Then the set $\set{n_i }{ i \in \NN_0}$ is generalised polynomial. 
\end{theorem}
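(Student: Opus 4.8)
The plan is to reduce the statement, via the connection between sets of values of linear recurrences and subsemigroups of multiplicative groups of number fields, to a generalised‑polynomiality statement about the cyclic semigroup generated by $\beta$ in $\QQ(\beta)$, and then to settle that statement using the rank‑$1$ hypothesis together with the asymmetry of the floor function. First I would put the recurrence into closed form: since the characteristic polynomial is the (separable) minimal polynomial of $\beta$, one has $n_i=\Tr_{\QQ(\beta)/\QQ}(c\beta^i)$ for a unique nonzero $c\in\QQ(\beta)$; fixing a $\ZZ$‑basis of $\tfrac1D\cO_{\QQ(\beta)}$ for a suitable denominator $D$, let $\iota\colon\tfrac1D\cO_{\QQ(\beta)}\xrightarrow{\ \sim\ }\ZZ^d$ with $d=\deg\beta$, so that $\iota(c\beta^i)\in\ZZ^d$. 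Because $\beta$ is Pisot, every conjugate of $c\beta^i$ other than $c\beta^i$ itself tends to $0$; hence $n_i-c\beta^i\to0$, and each coordinate of $\iota(c\beta^i)$ equals a fixed real‑algebraic multiple of $c\beta^i$ up to an error tending to $0$, so $\iota(c\beta^i)=(\nint{\lambda_0 n_i},\dots,\nint{\lambda_{d-1}n_i})$ for all large $i$, with fixed $\lambda_k$.

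Since $m\mapsto(\nint{\lambda_k m})_k$ is a generalised polynomial map and generalised polynomial sets are stable under composition with such maps, under finite modification, and under Boolean operations, the theorem reduces to the claim that $\iota(\{c\beta^i:i\geq0\})$ is a generalised polynomial subset of $\ZZ^d$ — equivalently, $m\in\{n_i:i\in\NN_0\}$ iff $m$ lies in a fixed finite set, or the reconstructed vector $v=(\nint{\lambda_k m})_k$ lies in $\iota(\{c\beta^i:i\geq0\})$ and satisfies $\Tr_{\QQ(\beta)/\QQ}(\iota^{-1}(v))=m$. (Spurious solutions are impossible: if $v=\iota(c\beta^j)$ and the trace equals $m$, then $m=n_j$.) This is the semigroup reduction promised by the abstract. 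For the semigroup claim I would use the rank‑$1$ hypothesis: by Dirichlet, $\beta$ is quadratic, or cubic with a single real place, and $\cO^*_{\QQ(\beta)}=\{\pm1\}\times\langle u\rangle$; since $\beta>1$ is Pisot, $\beta=u^k$ for some $k\geq1$. The coordinate set of the full unit group, $\{v\in\ZZ^d:\Norm(v)\in\{\pm1\}\}$ with $\Norm$ the norm form, is generalised polynomial as a level set of a polynomial, and because $\langle\beta\rangle$ has finite index in $\cO^*_{\QQ(\beta)}$, it (and the coset $c\langle\beta\rangle$) is further cut out by congruence conditions, which are again generalised polynomial. In the quadratic case $\{\iota(\beta^i):i\in\ZZ\}$ is then the set of integer points on a Pell conic $\Norm(v)=\pm1$, carrying the real‑linear functionals $v\mapsto\sigma_1(\iota^{-1}(v))$ and $v\mapsto\sigma_2(\iota^{-1}(v))$ whose product is constant; in the cubic case one gets a "twisted Pell" surface with one real and one complex functional, still with a product relation. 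In either case $\iota(\beta^i)$ is determined on the variety by its trace together with these functionals, and the forward orbit corresponds to the dominant real functional being $\geq1$ in absolute value.

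The crux — and where I expect essentially all the work to lie — is turning that last inequality into something generalised polynomial, since "$\sigma_1(\iota^{-1}(v))\geq1$" is a half‑space condition and is \emph{not} generalised polynomial. The idea is to rephrase membership in the forward orbit as an \emph{equality} involving $\floor{\cdot}$, whose failure to be symmetric under sign change automatically discards the backward orbit $\{\beta^{-i}\}$ and the sign copies $\{-\beta^i\}$. This is precisely the mechanism behind the classical characterisation $m\in\{F_n:n\geq0\}$ iff $m=0$ or $5m^2-4=\floor{\sqrt5\,m}^2$ or $5m^2+4=(\floor{\sqrt5\,m}+1)^2$: the quantity $5m^2\pm4$ is a perfect square exactly when $|m|$ is a Fibonacci number, and the two floor‑identities, being non‑even in $m$, retain only the positive solutions — and this is in fact the $\Norm(u)=-1$ quadratic case of the theorem. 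Carrying this over, for $z=\beta^i$ the non‑dominant conjugate data (one conjugate in the quadratic case, a complex‑conjugate pair in the cubic case) is recovered, up to $o(1)$, as floors of fixed real‑linear functionals of $v$, and "$z\in\{\beta^i:i\geq0\}$" becomes a finite disjunction of polynomial identities among the coordinates of $v$ and such floors, into which the norm‑form and congruence constraints are folded; finitely many small $i$ are treated separately.

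I expect the quadratic case to follow the Fibonacci template fairly directly, and the cubic (single real place) case to carry almost all the difficulty: there the subdominant eigenvalue is complex, so two coordinates must be rounded in a correlated manner, the relevant variety is a surface rather than a conic, and one must verify carefully that the floor‑identities select exactly the forward orbit $\{\beta^i:i\geq0\}$ and nothing else. The rank‑$1$ hypothesis enters essentially at this point: only then is $\langle\beta\rangle$ of finite index in the unit group, so that one may reduce to finitely many floor‑identities together with congruences and finitely many exceptions; for higher unit rank $\langle\beta\rangle$ becomes a thin subgroup, the congruence step fails, and the argument breaks down.
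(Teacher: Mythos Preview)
This paper does not itself prove Theorem~\ref{thm:Pisot}; it is quoted as a prior result from \cite{ByszewskiKonieczny-2018-TAMS} and \cite{AdamczewskiKonieczny}, and is then used as a black box in the proof of the hereditary strengthening (Theorem~\ref{thm:Pisot-hereditary-Znew}). So there is no proof here to compare against directly.

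That said, the paper's ingredients would assemble into a self-contained proof considerably simpler than your plan, and the comparison is instructive. Your reduction step is correct and matches the paper exactly: write $n_i=\Tr_{K/\QQ}(c\beta^i)$, use the Pisot property to build a generalised polynomial map $g\colon\QQ\to K$ with $g(n_i)=\beta^i$ for large $i$ (Proposition~\ref{lem:subset:n_i<->n_i'}), combine with $f(x)=\Tr_{K/\QQ}(cx)$ going the other way (as in Lemma~\ref{lem:Sal:select}), and reduce to showing that $\{\beta^i:i\in\NN_0\}$ is a generalised polynomial subset of $K$. Your appeal to Chevalley to cut out the finite-index subgroup $\langle\beta\rangle\subset\cO_K^*$ by congruences is also what the paper does (Proposition~\ref{prop:gpsetsexam}\ref{prop:gpsetsexam3a}).

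Where you take a detour is the forward-orbit selection. You pose it as the half-space condition ``dominant embedding $\geq 1$'', correctly note that this is not generalised polynomial, and then propose to encode it via Fibonacci-style floor identities on the Pell conic (quadratic case) or an analogous cubic surface (complex cubic case), expecting the latter to carry almost all the difficulty; this step is only sketched, not executed. The paper's observation (Lemma~\ref{lem:subset:all-beta^i} via Proposition~\ref{prop:gpsetsexam}\ref{prop:gpsetsexam4}) makes all of that unnecessary: within $\langle\beta\rangle$, the forward semigroup $\{\beta^i:i\geq 1\}$ is exactly the set of Pisot units, and ``$x$ is a Pisot unit in $K$'' is a generalised polynomial condition because for each non-identity embedding $\sigma$ the map $x\mapsto\sigma(x)\bar\sigma(x)=|\sigma(x)|^2$ is a generalised polynomial $K\to\RR$, and $|\sigma(x)|^2\in(0,1)$ holds precisely when $\lfloor\sigma(x)\bar\sigma(x)\rfloor=0$ and $x\neq 0$. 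This single device treats the quadratic and cubic cases uniformly, so the cubic case is no harder; your route might be pushed through, but it is replaced here by one line.
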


It seems considerably more difficult to prove results in the opposite direction, that is, to establish that the set of values of a certain linear recurrent sequence is not generalised polynomial. Essentially the only known examples of such sequences have been obtained in \cite{Konieczny-2021-JLM}, where it is shown that the set $\{ k^i \mid i \in \NN_0\}$ is not generalised polynomial for any integer $k\geq 2$. Note that $k$ is a Pisot \emph{number}, but it is not a Pisot \emph{unit}.

In this paper, we obtain several extensions of Theorem \ref{thm:Pisot}. 
The first of them concerns Salem numbers. Recall that a real algebraic number $\beta$ is a \emph{Salem number} if $\beta > 1$, all conjugates $\alpha$ of $\beta$ except for $\beta$ itself satisfy $\abs{\alpha} \leq 1$, and there exists at least one  conjugate $\alpha$ with $\abs{\alpha} = 1$. If $\beta$ is a Salem number, then $1/\beta$ is a conjugate of $\beta$, and for all remaining  conjugates $\alpha$ we have $\abs{\alpha} = 1$ \cite[Lem.\ 1]{Smyth-2015}. For background on Salem numbers, we refer to \cite{Bertin-book} and \cite{Smyth-2015}. 

\begin{alphatheorem}\label{thm:Salem}
	Let $\beta$ be a Salem number and let $(n_i)_{i=0}^\infty$ be an integer-valued linear recurrent sequence with characteristic polynomial the minimal polynomial of $\beta$. Then the set $\set{n_i}{i \in \NN_0}$ is generalised polynomial. 
	\end{alphatheorem}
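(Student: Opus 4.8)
The plan is to pass from the additive statement about $E=\set{n_i}{i\in\NN_0}\subset\ZZ$ to a multiplicative statement about the cyclic semigroup generated by $\beta$ inside $K=\QQ(\beta)$, and then to analyse that semigroup using the archimedean structure forced by the Salem condition. Write $2d=\deg\beta$; recall that Salem numbers have even degree at least $4$ (so $d\geq 2$) and that $\beta$ is a unit, its minimal polynomial being reciprocal with constant term $1$. Since the $n_i$ are rational and satisfy a recurrence whose irreducible characteristic polynomial is the minimal polynomial of $\beta$, a standard Galois-descent/Vandermonde argument produces $\gamma\in K$ with $n_i=\Tr_{K/\QQ}(\gamma\beta^i)$ for all $i$. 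If $\gamma=0$ the sequence is identically $0$ and $\{0\}$ is a generalised polynomial set, so assume $\gamma\neq 0$; clearing the denominator of $\gamma$ replaces $(n_i)$ by $(q n_i)$ for a positive integer $q$, which preserves the class of generalised polynomial sets, so we may assume $\gamma\in\cO_K$. Writing $\sigma_1$ for the embedding of $K$ with $\sigma_1(\beta)=\beta$, the conjugate expansion $n_i=\sum_{\rho}\sigma_\rho(\gamma)\rho^i$ over the conjugates $\rho$ of $\beta$ shows $n_i=\sigma_1(\gamma)\beta^i+O(1)$: the term with $\rho=\beta^{-1}$ tends to $0$, and every remaining $\rho$ lies on $\UU$ and contributes a bounded term. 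Since $\sigma_1(\gamma)\neq 0$ we get $\abs{n_i}\to\infty$, so $i\mapsto n_i$ is finite-to-one. It therefore suffices to prove that the semigroup $\set{\beta^i}{i\in\NN_0}$ is a generalised polynomial subset of $K$ in the sense introduced in this paper: the map $x\mapsto\Tr_{K/\QQ}(\gamma x)$ is $\QQ$-linear, and the correspondence between generalised polynomials on $K$ and on $\QQ$, together with the finite-to-one property, then pushes generalised-polynomiality from $\set{\beta^i}{i\in\NN_0}\subset\cO_K$ down to $E\subset\ZZ$.

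The Salem hypothesis enters through the Minkowski embedding $K\hookrightarrow K\otimes_{\QQ}\RR\cong\RR^2\times\CC^{d-1}$, under which $\beta^i$ has coordinates $(\beta^i,\,\beta^{-i},\,\alpha_1^i,\dots,\alpha_{d-1}^i)$, where $\alpha_1,\dots,\alpha_{d-1}$ are the conjugates of $\beta$ lying on $\UU$. At the two real places we see the pair $(\beta^i,\beta^{-i})$; this is governed by the same Ostrowski/$\beta$-expansion mechanism as the quadratic prototype of Theorem~\ref{thm:Pisot}, with the totally real field $K^+=\QQ(\beta+\beta^{-1})$ of degree $d$ playing the role of the base, since $[K:K^+]=2$ with relative conjugates $\beta,\beta^{-1}$ and $\beta$ a Pisot unit over $K^+$. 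At each of the $d-1$ complex places the coordinate is $\alpha_j^i$ with $\abs{\alpha_j}=1$, i.e.\ the orbit $i\mapsto(\alpha_1^i,\dots,\alpha_{d-1}^i)$ of a rotation on the torus $\UU^{d-1}$, and membership along such an orbit in a semialgebraic region of the torus is the archetypal generalised polynomial phenomenon. The decisive point is that, although $\cO_K^*$ has rank $d\geq 2$ — so $\beta$ is far from generating a finite-index subgroup of the units, unlike the rank-one hypothesis of Theorem~\ref{thm:Pisot} — the semigroup $\langle\beta\rangle$ still has rank one, and, crucially, every conjugate of $\beta$ off the two real places lies on $\UU$. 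Hence the Minkowski image of $\langle\beta\rangle$ is bounded in all but the first two coordinates: given a target value $m=n_i$, the leading coordinate of $\beta^i$ lies within $O(1)$ of $m/\sigma_1(\gamma)$, the second coordinate lies in $[0,1]$, and each of the remaining $d-1$ coordinates lies in the closed unit disk. Thus $\beta^i$ is confined to a bounded semialgebraic region $R_m\subset\RR^{2d}$ whose position depends on $m$ only through the affine quantity $m/\sigma_1(\gamma)$, and $R_m$ meets the lattice $\cO_K$ in a uniformly bounded number of points.

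The proof then runs as follows. First, set up the notion of generalised polynomial on a number field and establish the transfer lemma used above: the image of a generalised polynomial subset of $K$ under a $\QQ$-linear map that is finite-to-one on that subset is a generalised polynomial subset of $\QQ$. Secondly, show that as $m$ ranges over $\ZZ$ the finite set $\cO_K\cap R_m$ of lattice points in the moving region is described by a generalised polynomial in $m$; this is again an Ostrowski-type statement, since the displacement of $R_m$ relative to $\cO_K$ is controlled by the single quantity $m/\sigma_1(\gamma)$. Thirdly, for each candidate $x\in\cO_K\cap R_m$ express the condition $x\in\langle\beta\rangle$ as a generalised polynomial condition; this splits into a real-place part (the two real coordinates of $x$ are $(\beta^i,\beta^{-i})$ for a common $i$ — handled by the $\beta$-expansion machinery of the Pisot-unit case) and a complex-place part (the complex coordinates of $x$ are $(\alpha_j^i)_j$ for that same $i$ — handled by the torus-rotation machinery). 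I expect the third step, and specifically the \emph{coupling} between its two halves, to be the main obstacle: one must verify, uniformly in $i$ and hence in $m$, that one and the same integer $i$ produces the prescribed $\beta$-expansion data at the real places and the prescribed rotation data at the complex places, and fold this joint verification into a single generalised polynomial. The cleanest way to organise this — and, I expect, the route the paper takes — is to descend to $K^+=\QQ(\beta+\beta^{-1})$: there $\beta$ is a relative Pisot unit of degree two whose group of relative units has rank one (because $\rank\cO_K^*=d=(d-1)+1=\rank\cO_{K^+}^*+1$), and $\bigl(\Tr_{K/K^+}(\gamma\beta^i)\bigr)_i$ is an $\cO_{K^+}$-valued linear recurrent sequence with characteristic polynomial $X^2-(\beta+\beta^{-1})X+1$, exactly the shape treated by a relative version of Theorem~\ref{thm:Pisot}; combining that relative result with the transfer lemma (applied to $\Tr_{K^+/\QQ}$, again finite-to-one since $\abs{n_i}\to\infty$) yields the claim. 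A subsidiary point is the accompanying rank bookkeeping — that $\langle\beta\rangle$, the roots of unity of $K$, and $\cO_{K^+}^*$ together generate a finite-index subgroup of $\cO_K^*$ — which is what licenses treating the problem as rank one over the totally real part.
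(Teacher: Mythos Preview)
Your high-level plan --- show that $\{\beta^i\mid i\in\NN_0\}$ is a generalised polynomial subset of $K=\QQ(\beta)$, then push this forward along the trace $x\mapsto\Tr_{K/\QQ}(\gamma x)$ --- is exactly the paper's strategy. But the step you call the ``transfer lemma'' is where the actual work lies, and as you have stated it (``the image of a generalised polynomial subset of $K$ under a $\QQ$-linear map that is finite-to-one on that subset is a generalised polynomial subset of $\QQ$'') it is not justified. Finite-to-one is not enough: what one needs, and what the paper proves as Lemma~\ref{lem:Sal:select}, is a finite family of generalised polynomial maps $g_c\colon\QQ\to L$ (with $L$ the splitting field) such that for every $i$ some $g_c(n_i)=\beta^i$. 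Producing these explicit right-inverses is the heart of the Salem argument (Proposition~\ref{lem:Sal:recover}): one checks that $\bigl|\lfloor\beta^j n_i\rfloor-n_{i+j}\bigr|=O(\beta^j)$ with implied constant \emph{independent of $i$}, so the correction vector $(c_j^{(i)})_{0\le j<2d}$ ranges over a fixed finite set, and for each fixed correction vector a Vandermonde inversion expresses $\beta^i$ as an explicit generalised polynomial in $n_i$. Your proposal does not identify this mechanism; the vague ``Ostrowski-type statement'' and the bounded-region enumeration do not supply it.

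Your alternative route via $K^+=\QQ(\beta+\beta^{-1})$ and a ``relative version of Theorem~\ref{thm:Pisot}'' is not what the paper does and would require genuinely new input: no such relative Pisot theorem is developed, and the coupling problem you yourself flag (synchronising the real-place $\beta$-expansion data with the complex-place rotation data through a single $i$) is precisely what the Vandermonde argument above handles cleanly and uniformly. Likewise, the paper's proof that $\{\beta^i\}$ is generalised polynomial in $K$ (Lemma~\ref{lem:Salem:beta^j-is-GP}) is much shorter than the Minkowski/Ostrowski analysis you sketch: the Salem numbers in $K$ form a cyclic semigroup $\{\gamma^j\mid j\in\NN\}$, Chevalley's theorem makes $\langle\beta\rangle$ a congruence subgroup of $\langle\gamma\rangle$, and one concludes by intersecting with a lattice coset.
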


The proof of this result is most naturally phrased in terms of the notion of a \emph{generalised polynomial map on a number field}. A number field $K$ is a finite extension of $\QQ$, and generalised polynomials on $K$ can be defined in terms of the coordinates of an element in some $\QQ$-basis of $K$; for example, a generalised polynomial map $g$ on $\QQ(\sqrt{2})$ is of the form $g(x + y\sqrt{2}) = h(x,y)$, where $h$ is a generalised polynomial expression in two variables $x,y$ taking values in $\QQ$. We carefully introduce this concept in Section 2. 
Once the notion has been introduced, it is rather immediate to see  that the set $\{\beta^i \mid i\in \NN_0\}$ of powers of a Salem number $\beta$ is a generalised polynomial subset of $\QQ(\beta)$, and Theorem \ref{thm:Salem} can be deduced from this.

The special role of Pisot and Salem numbers in diophantine approximation is well recognized, even as many of the characterisations of Pisot and Salem numbers by their diophantine properties remain conjectural. In the context of generalised polynomials, we believe that Theorems \ref{thm:Pisot} and \ref{thm:Salem} should provide an essentially complete list of linear recurrent sequences whose set of values is generalised polynomial, and, similarly, an essentially complete list of algebraic numbers $\beta$ such that the set of powers $\{\beta^i \mid i\in \NN_0\}$ is a generalised polynomial subset of $\QQ(\beta)$. The following result elucidates the connection between these two questions.

\begin{alphatheorem}\label{thm:Tr-vs-Pow}
	Let $\beta$ be an algebraic integer. Suppose that there exists an integer-valued sequence $(n_i)_{i=0}^{\infty}$ with characteristic polynomial the minimal polynomial of $\beta$ that is not identically zero and is  such that the  set $\{n_i  \mid i \in \NN_0\}$ is generalised polynomial. Then the set
	$ \set{\beta^i}{i \in \NN_0}$  is a generalised polynomial subset of $\QQ(\beta)$. 
\end{alphatheorem}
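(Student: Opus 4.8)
The plan is to strip the number field out of the problem by a soft linear-algebra manoeuvre, reducing it to a purely Diophantine statement about the recurrence $(n_i)$, and then to identify the powers of $\beta$ inside the ``state space'' of the recurrence by finitely many generalised polynomial conditions. For the first part, I would begin by noting that, as $P$ is irreducible of degree $m=[\QQ(\beta):\QQ]$, the $\QQ$-vector space of $\QQ$-valued solutions of the recurrence with characteristic polynomial $P$ is $m$-dimensional, and $c\mapsto\big(i\mapsto\Tr_{\QQ(\beta)/\QQ}(c\beta^i)\big)$ defines an injective $\QQ$-linear map from $\QQ(\beta)$ onto it; hence $n_i=\Tr_{\QQ(\beta)/\QQ}(c\beta^i)$ for a unique $c\in\QQ(\beta)$, necessarily nonzero since $(n_i)$ is not identically zero. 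Next I would introduce the $\QQ$-linear map
\[
  \Phi\colon\QQ(\beta)\longrightarrow\QQ^{m},\qquad \Phi(x)=\big(\Tr_{\QQ(\beta)/\QQ}(c\beta^{j}x)\big)_{j=0}^{m-1},
\]
whose coordinates are $\QQ$-linear functionals, hence generalised polynomial maps on $\QQ(\beta)$; because $c\neq0$ and the trace form is non-degenerate, $\Phi$ is a bijection with $\QQ$-linear (hence generalised polynomial) inverse, and $\Phi(\beta^i)=(n_i,n_{i+1},\dots,n_{i+m-1})$. Since generalised polynomial maps are closed under composition, $\set{\beta^i}{i\in\NN_0}$ is a generalised polynomial subset of $\QQ(\beta)$ if and only if
\[
  \widehat F:=\set{(n_i,n_{i+1},\dots,n_{i+m-1})}{i\in\NN_0}
\]
is a generalised polynomial subset of $\QQ^m$. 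So it suffices to prove: if the value set of a not identically zero linear recurrence with irreducible characteristic polynomial is generalised polynomial, then so is the set of its consecutive windows of length $m$.

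For the Diophantine step, let $M$ be the companion matrix of $P$, so $v\mapsto Mv$ is a $\QQ$-linear (hence generalised polynomial) self-map of $\QQ^m$ and $\widehat F=\set{M^i\widehat n_0}{i\in\NN_0}$ with $\widehat n_0=(n_0,\dots,n_{m-1})$; if $\beta=0$ or $\beta$ is a root of unity then $\widehat F$ is finite and we are done (finite sets, and finite modifications of generalised polynomial sets, are generalised polynomial), so assume otherwise. Then $\widehat n_0\neq0$ is a cyclic vector for $M$, the map $i\mapsto\widehat n_i$ is injective, and by Skolem--Mahler--Lech each value is attained by $(n_i)$ only finitely often (the degenerate case being dispatched by splitting into arithmetic subsequences). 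Observe that $v\in\widehat F$ forces the first coordinate of $M^kv$ to lie in $E_k:=\set{n_i}{i\ge k}$ --- a finite modification of $E$, hence generalised polynomial --- for every $k\ge0$; the task is to show that finitely many such conditions already cut out $\widehat F$ up to a finite set. This is straightforward when $\beta$ is a Pisot number: there $n_i=c\beta^i+o(1)$, so $n_{i+1}=\nint{\beta n_i}$ for all large $i$, and consequently, up to a finite set,
\[
  \widehat F=\set{(v_0,\dots,v_{m-1})\in\QQ^m}{v_0\in E\ \text{and}\ v_{j+1}=\nint{\beta v_j}\ \text{for}\ 0\le j<m-1},
\]
a finite intersection of generalised polynomial sets --- here one uses that $1_{\{0\}}$, hence the relation ``$v_{j+1}=\nint{\beta v_j}$'', is generalised polynomial --- and so $\widehat F$, and therefore $\set{\beta^i}{i\in\NN_0}$, is generalised polynomial. (The same argument covers all Pisot $\beta$, whether or not $\cO_{\QQ(\beta)}^*$ has rank $1$ and whether or not $\beta$ is a unit, thereby extending Theorem \ref{thm:Pisot}.)

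I expect the main obstacle to be the remaining case, in which $\beta$ has no simple real conjugate of strictly largest modulus, or has subdominant conjugates on the unit circle (as happens for Salem numbers): then the correction $n_{i+1}-\beta n_i$ no longer tends to $0$ but oscillates, being governed by a linear orbit on a torus coming from the arguments of those conjugates, and to carry out the analogue of the Pisot argument one must recover that torus position --- and hence the correction --- from $n_i$ alone by a generalised polynomial expression, which cannot be done through the naive estimate $i\approx\log_\beta(n_i/c)$ since the logarithm is not generalised polynomial. This is the step where the full hypothesis --- that $E$ itself, and not merely some linear recurrence with characteristic polynomial $P$, is generalised polynomial --- must be exploited: one would combine the structural description of $E$ as a generalised polynomial set with finiteness theorems for $S$-unit equations (to bound coincidences among the recurrences $i\mapsto\Tr_{\QQ(\beta)/\QQ}(c\beta^i y)$ and to exclude spurious vectors whose forward $M$-orbit accidentally keeps its first coordinate in $E$), and, when $\beta$ is not a unit, with $\mathfrak p$-adic information about the $n_i$ at the primes dividing $\Norm_{\QQ(\beta)/\QQ}(\beta)$. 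Assembling these ingredients into a finite generalised polynomial description of $\widehat F$ is, I anticipate, the bulk of the work.
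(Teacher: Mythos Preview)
Your reduction via the linear bijection $\Phi$ is correct and is exactly the framework the paper uses: the paper considers the set $A=\{x\in K\mid \Tr_{K/\QQ}(\gamma^k x)\in X\text{ for }0\le k<N\}$, which in your coordinates is precisely the set of vectors whose first $N$ forward $M$-iterates have first coordinate in $E$. Your treatment of the Pisot case is also fine (and indeed recovers more than Theorem~\ref{thm:Pisot} in this direction). The gap is that for general $\beta$ you only gesture at the main step, and some of the complications you anticipate are not the ones that actually arise.

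Concretely, the paper shows (Proposition~\ref{prop:x*beta^i}) that for $N$ large enough the condition ``$\Tr_{K/\QQ}(\gamma^k x)\in X$ for all $0\le k<N$'' forces $x=\gamma^l\sigma(s)$ for some $l\in\ZZ$, some $s$ in a fixed finite set, and some automorphism $\sigma$ with $\sigma(\gamma)=\omega\gamma^{\pm 1}$. Three ingredients you do not mention are essential here: (i) one must first replace $\beta$ by a power $\gamma=\beta^d$ satisfying the non-degeneracy condition $(\dagger)$, since otherwise coincidences $\Tr(\beta^i z)=\Tr(\beta^j z')$ are too frequent to control; (ii) the $S$-unit argument does not directly exclude spurious $x$ but only pins them down up to the ambiguity $\sigma(\gamma)=\omega\gamma^{\pm 1}$, and the case where $\gamma$ and $\gamma^{-1}$ are conjugate (e.g.\ Salem numbers) requires a separate analysis using an archimedean place to separate the two orbits; (iii) even after this one is left not with $\{\gamma^l\}$ but with $\{\gamma^l s\mid s\in S\}$ for a finite set $S$, and a further descent argument together with Chevalley's congruence-subgroup theorem is needed to isolate $\{\gamma^l\mid l\in\NN_0\}$. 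None of these steps is routine. By contrast, your suggestion to use $\mathfrak p$-adic information at primes dividing $\Norm_{K/\QQ}(\beta)$ is not needed: the paper's argument is uniform in whether $\beta$ is a unit, and in fact works for arbitrary algebraic numbers (Theorem~\ref{thm:Tr-vs-Pownew}).
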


The interest in the above result arises from the fact that it is likely  easier to show that the set of powers of an algebraic number is not generalised polynomial than to show that the corresponding result holds for the set of values of a linear recurrent sequence. In particular, the methods of \cite{Konieczny-2021-JLM} relied strongly on the fact that the set of powers of an integer $k$ forms a semigroup, and could conceivably be generalised.

Returning to Pisot numbers, we observe that we can strengthen the result obtained in Theorem \ref{thm:Pisot}. We say that a set $E$ is \emph{hereditarily generalised polynomial} or  that a generalised polynomial set is \emph{hereditary} if each subset $E' \subset E$ is generalised polynomial (this notion applies to generalised polynomial subsets of integers, number fields, etc.). The following result shows that the the sets considered in Theorem \ref{thm:Pisot} are in fact hereditary. In particular, any set consisting of Fibonacci numbers is generalised polynomial.

\begin{alphatheorem}\label{thm:Pisot-hereditary-Z}
	Let $\beta$ be a Pisot unit such that $\cO_{\QQ(\b)}^*$ has rank $1$, and let $(n_i)_{i=0}^\infty$ be an integer-valued linear recurrent sequence with characteristic polynomial the minimal polynomial of $\beta$. Let $I$ be an arbitrary subset of $\NN_0$. Then the set $\set{n_i}{i \in I}$ is generalised polynomial. 
\end{alphatheorem}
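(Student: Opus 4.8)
The plan is to reduce the statement to the following claim about the number field $K := \QQ(\beta)$: for every $I \subseteq \NN_0$ the set $\set{\beta^i}{i \in I}$ is a generalised polynomial subset of $K$. Granting this, one transfers back to $\ZZ$ as follows. Writing $m := [K:\QQ]$, the space of $\QQ$-valued sequences satisfying the given recurrence is $m$-dimensional, and since the trace form on $K$ is non-degenerate, every such sequence has the form $n_i = \Tr_{K/\QQ}(\gamma\beta^i)$ for a unique $\gamma \in K$. Let $\sigma_1 \colon K \hookrightarrow \RR$ be the embedding with $\sigma_1(\beta) = \beta$. If $\sigma_1(\gamma) = 0$ then $n_i \to 0$, so $(n_i)$ is eventually zero and $\set{n_i}{i \in I}$ is finite, hence generalised polynomial; so we may assume $\sigma_1(\gamma) \neq 0$. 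Expanding $\beta^i = \sum_{k=0}^{m-1} c_i^{(k)}\beta^k$ with $c_i^{(k)} \in \ZZ$, each $(c_i^{(k)})_i$ satisfies the recurrence, so $c_i^{(k)} = \lambda_k\beta^i + O(\rho^i)$ with $\rho := \max_{l \geq 2}\abs{\beta^{(l)}} < 1$ (here the Pisot property is used) for suitable $\lambda_k \in \RR$; combined with $n_i = \sigma_1(\gamma)\beta^i + O(\rho^i)$ this yields $c_i^{(k)} = \nint{\mu_k n_i}$ for all large $i$, where $\mu_k := \lambda_k/\sigma_1(\gamma)$. Hence the map $\psi \colon \ZZ \to \ZZ^m$, $\psi(n) := \bra{\nint{\mu_0 n}, \dots, \nint{\mu_{m-1} n}}$, is generalised polynomial and satisfies $\psi(n_i) = \bra{c_i^{(0)}, \dots, c_i^{(m-1)}}$ for all $i \geq i_0$.

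Now let $G \colon \ZZ^m \to K$ be the coordinate map $\vec a \mapsto \sum_k a_k\beta^k$, which is generalised polynomial, and consider
\[
	T_I := \set{(n,\vec a) \in \ZZ \times \ZZ^m}{G(\vec a) \in \set{\beta^j}{j \in I} \ \text{and}\ n = \Tr_{K/\QQ}\bra{\gamma\, G(\vec a)}}.
\]
Granting the $K$-claim, $T_I$ is generalised polynomial: it is the intersection of the preimage of the generalised polynomial set $\set{\beta^j}{j \in I} \subseteq K$ under the generalised polynomial map $(n,\vec a) \mapsto G(\vec a)$ with the zero set of the linear form $n - \sum_k a_k\Tr_{K/\QQ}(\gamma\beta^k)$, which has rational coefficients. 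For $i \geq i_0$ with $i \in I$ we have $(n_i,\psi(n_i)) \in T_I$ since $G(\psi(n_i)) = \beta^i$ and $\Tr_{K/\QQ}(\gamma\beta^i) = n_i$; conversely, if $(n,\psi(n)) \in T_I$ then $G(\psi(n)) = \beta^j$ for some $j \in I$, whence $n = \Tr_{K/\QQ}(\gamma\beta^j) = n_j \in \set{n_i}{i \in I}$, using that $i \mapsto \beta^i$ is injective. Therefore
\[
	\set{n_i}{i \in I} = \set{n \in \ZZ}{(n,\psi(n)) \in T_I} \cup \set{n_i}{i \in I,\ i < i_0},
\]
a union of a generalised polynomial set and a finite set, hence generalised polynomial. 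This completes the reduction.

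It remains to prove the $K$-claim, which is the heart of the matter. By Theorem~\ref{thm:Pisot} — the only place where the rank-$1$ hypothesis is used — together with Theorem~\ref{thm:Tr-vs-Pow} applied to a non-trivial integer-valued sequence with the requisite recurrence, for instance $i \mapsto \Tr_{K/\QQ}(\beta^i)$, the set $\set{\beta^i}{i \in \NN_0}$ is a generalised polynomial subset of $K$; so it suffices to produce a generalised polynomial function $g \colon K \to \{0,1\}$ with $g(\beta^i) = 1_I(i)$ for all $i$, and then take $1_{\set{\beta^i}{i \in I}} = g \cdot 1_{\set{\beta^i}{i \in \NN_0}}$. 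The idea is to encode the arbitrary set $I$ into a single real constant. Fix $\kappa \in \cO_K$ with $\sigma_1(\kappa) \in (\tfrac13,\tfrac23)$ (possible since $\sigma_1(\cO_K)$ is dense in $\RR$) and a large integer $C$, and set $\theta_I := \sigma_1(\kappa)\sum_{j \in I}\beta^{-Cj}$. Expressing $x \in K$ in a $\QQ$-basis, $\sigma_1(x)$ is a linear form in the coordinates of $x$ with real coefficients, so $\fp{\sigma_1(x)^C\theta_I}$ is available inside generalised polynomial expressions; let $g(x) := \floor{\fp{\sigma_1(x)^C\theta_I} + \tfrac34} - \floor{\fp{\sigma_1(x)^C\theta_I} + \tfrac14}$. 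For $x = \beta^i$ one has $\sigma_1(x)^C\theta_I = \sigma_1(\kappa)\sum_{j \in I}\beta^{C(i-j)}$: the summands with $j < i$ equal $\sigma_1\bra{\kappa\beta^{C\ell}}$ for $\ell \geq 1$, and since $\kappa\beta^{C\ell} \in \cO_K$ has all its other conjugates of absolute value $O(\rho^{C\ell})$, these lie within $O(\rho^C)$ of integers in total; the summands with $j > i$ contribute $O(\beta^{-C})$; and the summand $j = i$ contributes exactly $1_I(i)\sigma_1(\kappa)$. Taking $C$ large enough that both error terms are $< \tfrac1{10}$, the quantity $\fp{\sigma_1(\beta^i)^C\theta_I}$ lies in $(\tfrac14,\tfrac34)$ when $i \in I$ and in $[0,\tfrac14)\cup(\tfrac34,1)$ when $i \notin I$, uniformly in $i$, so $g(\beta^i) = 1_I(i)$ as desired.

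The substance of the proof is this last construction, and the point I expect to be delicate is arranging the estimates so that the cases $i \in I$ and $i \notin I$ separate cleanly for \emph{every} $i$ at once. There are uncountably many $I \subseteq \NN_0$, hence — when $(n_i)$ is injective — uncountably many sets $\set{n_i}{i \in I}$, so no uniform formula can work and the data of $I$ must be carried by a real parameter; the mechanism that lets such a parameter interact with a generalised polynomial expression is that the Pisot property forces the ``heads and tails'' of $\sigma_1(\beta^i)^C\theta_I$ to be negligible modulo $1$, exposing the single bit $1_I(i)$ to a bounded floor operation. This is precisely what is unavailable for $\set{k^i}{i \in \NN_0}$ with $k$ a rational integer, which is not even generalised polynomial \cite{Konieczny-2021-JLM}; the rank-$1$ hypothesis intervenes only to provide the scaffold that $\set{\beta^i}{i \in \NN_0}$ is generalised polynomial in $K$.
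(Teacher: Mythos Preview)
Your proof is correct and follows the same two-step strategy as the paper: first establish that $\set{\beta^i}{i\in I}$ is a generalised polynomial subset of $K$ (this is Theorem~\ref{thm:Pisot-hereditary-K}), then pull this back to $\ZZ$ via a generalised polynomial map sending $n_i\mapsto\beta^i$, built from the nearest-integer relation $c_i^{(k)}=\nint{\mu_k n_i}$ (the paper packages this as Proposition~\ref{lem:subset:n_i<->n_i'} and Theorem~\ref{thm:Pisot-hereditary-Znew}). Your encoding of $I$ into the real constant $\theta_I$ is exactly the mechanism of Proposition~\ref{prop:subset:gamma^i-I}; using the degree-$C$ map $x\mapsto\sigma_1(x)^C$ in place of the paper's decomposition into residue classes modulo $m$ is a cosmetic variant that merges the two steps of Section~\ref{sec:Pisot:K} into one. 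The one substantive difference is how you obtain that $\set{\beta^i}{i\in\NN_0}$ is generalised polynomial in $K$: you go through Theorem~\ref{thm:Pisot} and Theorem~\ref{thm:Tr-vs-Pow}, which is logically valid (Section~\ref{sec:GPpowers} is independent of Sections~\ref{sec:Pisot:K}--\ref{sec:Pisot:Z}) but imports the $S$-unit machinery; the paper instead gives a direct two-line argument (Lemma~\ref{lem:subset:all-beta^i}): under the rank-$1$ hypothesis $\set{\beta^i}{i\in\ZZ}$ has finite index in $\cO_K^*$, hence is generalised polynomial by Proposition~\ref{prop:gpsetsexam}\ref{prop:gpsetsexam3a}, and intersecting with the generalised polynomial set of Pisot units in $K$ yields $\set{\beta^i}{i\in\NN}$.
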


The above result has the following counterpart for sets of powers of $\beta$.

\begin{alphatheorem}\label{thm:Pisot-hereditary-K}
	Let $\beta$ be a Pisot unit such that $\cO_{\QQ(\b)}^*$ has rank $1$. Let $I$ be an arbitrary subset of $\NN_0$. Then the set $\set{\beta^i}{i \in I}$ is a generalised polynomial subset of $\QQ(\b)$. 
\end{alphatheorem}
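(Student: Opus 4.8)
The plan is to deduce Theorem~\ref{thm:Pisot-hereditary-K} from Theorem~\ref{thm:Pisot-hereditary-Z} (together with Theorem~\ref{thm:Tr-vs-Pow}) by transporting the integer‑valued statement to $K:=\QQ(\beta)$ along the trace map $\Tr_{K/\QQ}$, which, being $\QQ$‑linear, is in particular a generalised polynomial map on $K$. Set $d:=[K:\QQ]\ge 2$ and
\[
	n_i:=\Tr_{K/\QQ}(\beta^i)=\sum_{\sigma}\sigma(\beta)^i,
\]
the sum running over the embeddings $\sigma\colon K\hookrightarrow\CC$. Then $(n_i)_{i\ge0}$ is integer‑valued ($\beta$ being a unit, each $\beta^i$ is an algebraic integer), it satisfies the linear recurrence with characteristic polynomial $\prod_\sigma(X-\sigma(\beta))$, which is the minimal polynomial of $\beta$ (the embeddings are distinct on $\beta$ since $K=\QQ(\beta)$), and it is not identically zero since $n_0=d$. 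Hence $(n_i)$ is a legitimate input for Theorems~\ref{thm:Tr-vs-Pow} and~\ref{thm:Pisot-hereditary-Z}.

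I would combine two facts. First, by Theorem~\ref{thm:Pisot-hereditary-Z} the set $\set{n_i}{i\in\NN_0}$ is generalised polynomial, so Theorem~\ref{thm:Tr-vs-Pow} applied to $(n_i)$ shows that $h:=1_{\set{\beta^i}{i\in\NN_0}}$ is a generalised polynomial map on $K$; this is the case $I=\NN_0$ handled entirely through the $\ZZ$‑valued machinery, so no circularity arises. Secondly, Theorem~\ref{thm:Pisot-hereditary-Z} applied to the given $I$ shows $\set{n_i}{i\in I}\subset\ZZ$ is generalised polynomial; composing its defining generalised polynomial expression with $\Tr_{K/\QQ}\colon K\to\QQ$ produces a generalised polynomial map $g\colon K\to\RR$ with $g(\beta^i)=1$ precisely when $n_i\in\set{n_j}{j\in I}$. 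The candidate for $1_{\set{\beta^i}{i\in I}}$ is the product $g\cdot h$: it is again a generalised polynomial map on $K$, it is $\{0,1\}$‑valued, and it equals $1$ at $x$ exactly when $x=\beta^i$ for some $i$ with $n_i\in\set{n_j}{j\in I}$.

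The crux is that ``$n_i\in\set{n_j}{j\in I}$'' is equivalent to ``$i\in I$'' for all but finitely many $i$, so that $g\cdot h$ agrees with $1_{\set{\beta^i}{i\in I}}$ off a finite set. Here the Pisot hypothesis enters essentially: since $\abs{\sigma(\beta)}<1$ for every conjugate $\sigma(\beta)\ne\beta$, one has $n_i=\beta^i+O(\rho^i)$ with $\rho:=\max_{\sigma\ne\mathrm{id}}\abs{\sigma(\beta)}<1$, whence $n_{i+1}-n_i=\beta^i(\beta-1)+O(\rho^i)\to+\infty$; thus $(n_i)$ is eventually strictly increasing and $n_i\to+\infty$, so $(n_i)$ is injective outside a finite set $B\subset\NN_0$. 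For $i\notin B$ we get $n_i\in\set{n_j}{j\in I}\iff i\in I$, so $g\cdot h$ and $1_{\set{\beta^i}{i\in I}}$ can differ only on the finite set $\set{\beta^i}{i\in B}$. Since every singleton $\{v\}\subset K$ is a generalised polynomial set — for instance $y\mapsto 1+\floor{\alpha y}+\floor{-\alpha y}$ detects $y=0$ when $\alpha$ is irrational, and one applies this to each coordinate of $x-v$ and multiplies — and since generalised polynomial subsets of $K$ remain so after adding or removing finitely many points, correcting $g\cdot h$ on $\set{\beta^i}{i\in B}$ yields a generalised polynomial map equal to $1_{\set{\beta^i}{i\in I}}$, as required.

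The only genuinely substantive point is this last interface step: one must be sure that membership of $\Tr_{K/\QQ}(\beta^i)$ in the integer set $\set{n_j}{j\in I}$ faithfully records the exponent $i$ rather than conflating distinct powers, and this comes down to the elementary Pisot‑type fact that $(\Tr_{K/\QQ}(\beta^i))_i$ is eventually monotone. Everything else — composition and products of generalised polynomial maps, precomposition with the linear map $\Tr_{K/\QQ}$, finite modifications, indicators of singletons — is formal, provided the framework of Section~2 is in place. Structurally, the argument is precisely the paper's correspondence between linear recurrent sequences and multiplicative subsemigroups of number fields, read in the direction ``from the sequence $(\Tr_{K/\QQ}(\beta^i))_i$ to the (multiplicative) orbit $\set{\beta^i}{i\ge0}$''.
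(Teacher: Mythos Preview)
Your argument is circular relative to the paper's logical structure. You deduce Theorem~\ref{thm:Pisot-hereditary-K} from Theorem~\ref{thm:Pisot-hereditary-Z}, but in this paper the implication runs the other way: the proof of Theorem~\ref{thm:Pisot-hereditary-Z} (Theorem~\ref{thm:Pisot-hereditary-Znew}) explicitly invokes Theorem~\ref{thm:Pisot-hereditary-K} (Theorem~\ref{thm:Pisot-hereditary-Knew}) to know that $\set{\beta^i}{i\in I}$ is generalised polynomial in $K$, and then pulls this back to $\QQ$ via the generalised polynomial map $n_i\mapsto\beta^i$ of Proposition~\ref{lem:subset:n_i<->n_i'}. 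Your remark ``no circularity arises'' is correct only for the first step, where $I=\NN_0$ reduces Theorem~\ref{thm:Pisot-hereditary-Z} to the previously known Theorem~\ref{thm:Pisot}; it is the second step, invoking Theorem~\ref{thm:Pisot-hereditary-Z} for an \emph{arbitrary} $I$, that closes the loop. Unless you supply an independent proof of Theorem~\ref{thm:Pisot-hereditary-Z} that does not pass through $K$, your deduction has no content.

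The paper instead proves Theorem~\ref{thm:Pisot-hereditary-K} directly. The case $I=\NN_0$ is Lemma~\ref{lem:subset:all-beta^i}, obtained by intersecting the finite-index subgroup $\langle\beta\rangle\subset\cO_K^*$ with the set of Pisot units (both generalised polynomial by Proposition~\ref{prop:gpsetsexam}); this avoids the detour through Theorem~\ref{thm:Tr-vs-Pow}. For general $I$, the key construction (Proposition~\ref{prop:subset:gamma^i-I}) encodes $I$ into a single real number $\xi=\sum_{i\in I}\beta/\gamma^i$ with $\gamma=\beta^m$ for $m$ large, and uses the Pisot estimate $\fpa{\beta\gamma^{j-i}}=O(\rho^{-m|i-j|})$ to show that $\fpa{\gamma^j\xi}$ is bounded away from $0$ precisely when $j\in I$. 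Thus membership in $I$ is read off by a generalised polynomial condition on $\gamma^j$, and intersecting with $\set{\gamma^j}{j\in\NN_0}$ finishes the argument. This is the genuinely new idea that makes the hereditary statement work; the integer version is then a corollary, not a hypothesis.
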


In light of Theorems \ref{thm:Pisot-hereditary-Z} and \ref{thm:Pisot-hereditary-K}, the following question arises naturally.

\begin{question}
	Are the generalised polynomial sets considered in Theorem \ref{thm:Salem} hereditary?
\end{question}

It would be interesting to determine more generally which generalised polynomial sets are hereditary. Since a generalised polynomial subset of the integers always has density, no positive density generalised polynomial set of integers can be hereditary (see Section \ref{sec:No-Heir} for details), and so the question is only interesting  for sets with density zero. The task of disproving that a set is hereditarily generalised polynomial is made difficult by the fact that most tools available for showing that a given set $E \subset \ZZ$ with density zero is not generalised polynomial also yield the same conclusion for all supersets $E'$ of $E$ with density zero (see e.g.\ \cite[Thm.\ 3.1]{ByszewskiKonieczny-2018-TAMS}). Nevertheless, it is not true that every generalised polynomial set of density zero is hereditary.

\begin{alphatheorem}\label{thm:non-hereditary}
	There exists a set $E \subset \ZZ$ of density zero that is generalised polynomial but is not hereditary.
\end{alphatheorem}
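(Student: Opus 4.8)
The plan is to produce one generalised polynomial set $E\subset\ZZ$ of density zero together with a subset $E'\subset E$ that is not generalised polynomial. The key point is that the obstruction used in the positive density case — that every generalised polynomial set has a density, see Section~\ref{sec:No-Heir} — is of no use here, since every subset of a density zero set automatically has density zero; one must instead violate a finer rigidity property of generalised polynomial sets.

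For $E$ I would take a set built from a fixed quadratic Pisot unit, for concreteness the golden ratio $\phi$ and the Fibonacci sequence $(F_i)_{i\ge 0}$: let
\[
  E \;=\; \set{F_i}{i\ge 0}\;\cup\;\set{F_i+F_j}{0\le j\le i-2},
\]
the set of positive integers whose Zeckendorf representation uses at most two terms. This set has $\ll(\log N)^2$ elements below $N$, hence density zero, and I would show it is generalised polynomial using the circle of ideas behind Theorem~\ref{thm:Pisot}: from $n$ one recovers, by a generalised polynomial operation, the largest Fibonacci number $L(n)\le n$ — this is where the rank-one unit group of $\phi$, i.e.\ the well-behaved base-$\phi$ expansion, is essential — and then ``$n\in E$'' becomes the generalised polynomial condition that $n=L(n)$, or that $n-L(n)$ is a Fibonacci number whose index is at most two less than that of $L(n)$. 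Making this extraction of $L(n)$ (and of its iterate on $n-L(n)$) genuinely generalised polynomial is the first technical task.

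For the bad subset, fix a function $f\colon\NN_0\to\NN_0$ with $f(i)\le i-2$ and put
\[
  E'_f \;=\; \set{F_i+F_{f(i)}}{i\ge 0}\subset E,
\]
which selects exactly one integer at each ``scale''. For structured $f$ — say $f(i)=i-c$, when $E'_f$ is the value set of an integer linear recurrence with characteristic polynomial the minimal polynomial of $\phi$, or $f$ eventually constant, when $E'_f$ is a shift of $\set{F_i}{i\ge 0}$ up to a finite set — the set $E'_f$ is again generalised polynomial by Theorem~\ref{thm:Pisot}. The idea is to choose $f$ outside this rigid class so that $E'_f$ is not generalised polynomial: if it were, then peeling off the leading Zeckendorf term exactly as for $E$ would exhibit $n-L(n)=F_{f(i)}$ as a generalised polynomial function of $L(n)=F_i$ along the sequence $(F_i)_i$, whereas ``$i\mapsto F_{f(i)}$ is generalised polynomial in $F_i$'' is a rigidity statement failing for generic $f$.

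The main obstacle is precisely this last step. In the positive density case the contradiction was immediate from densities; here one must first establish the rigidity — that any generalised polynomial description of a subset of a ``bounded-length base-$\phi$'' set like $E$ must factor through the leading-term map $n\mapsto L(n)$, equivalently that a generalised polynomial subset of $\NN_0^2$ which is the graph of a function $f$ with $f(i)\le i$ is tame (for instance it agrees, along an arithmetic progression of indices, with a piece of an ordinary polynomial, or has bounded generalised polynomial complexity) — and then pick an $f$ violating it. A secondary subtlety is that the recovery of $L(n)$ must remain generalised polynomial after restriction to the sparse set $E'_f$, which is not automatic, since the image of a generalised polynomial set under a generalised polynomial map need not be generalised polynomial.
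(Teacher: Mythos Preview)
Your proposal does not reach a proof: you correctly isolate the missing step (``one must first establish the rigidity\ldots'') and then stop there. No concrete obstruction to a given $E'_f$ being generalised polynomial is ever stated, let alone proved; the vague assertion that ``$i\mapsto F_{f(i)}$ is generalised polynomial in $F_i$'' should fail for generic $f$ is not a theorem in the paper or in the literature you cite, and the further reduction to ``a generalised polynomial subset of $\NN_0^2$ which is the graph of a function'' being tame is, as you note yourself, exactly what would need to be proved. So as written this is a sketch of a programme rather than a proof.

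The paper takes a completely different route, and the obstruction it uses is \emph{subword complexity}. By \cite[Thm.~A]{AdamczewskiKonieczny} (Theorem~\ref{thm:subword-complexity} here), the indicator function of any generalised polynomial set has at most polynomially many length-$L$ subwords. The set $E$ is \emph{not} a Fibonacci-type set but rather a density-zero generalised polynomial set whose counting function $\abs{E\cap[0,N)}$ tends to infinity arbitrarily slowly relative to $N$---such sets exist by \cite[Prop.~8.12]{AdamczewskiKonieczny} (Theorem~\ref{thm:slow-decay} here). The point of this slow decay is that on many consecutive intervals of length $L$ the set $E$ contains at least $h(L)L$ points, with $h(L)\to 0$ as slowly as one likes; one then deletes points from $E$ independently on these intervals so that the resulting subset $F$ has $p_{1_F}(L)\ge 2^{h(L)L}$. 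Taking $h(L)=L^{-1/2}$ already gives $p_{1_F}(L)\ge 2^{\sqrt{L}}$, which beats any polynomial and forces $F$ not to be generalised polynomial.

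Note that your candidate $E$ is far too sparse for this argument: with $\abs{E\cap[0,N)}\ll(\log N)^2$, every interval of length $L$ meets $E$ in only $O((\log L)^2)$ points, so no subset of your $E$ can have subword complexity exceeding $2^{O((\log L)^2)}$ by this kind of counting. Whether your $E$ is nonetheless non-hereditary is an interesting question, but settling it would require exactly the rigidity theorem you are missing.
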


In the context of this paper, it is natural to consider generalised polynomial expressions with variables taking rational, rather than integer, values. This corresponds to the notion of a generalised polynomial subset of $\QQ$ (rather than $\ZZ$). In the introduction we have for simplicity stated the main results  for subsets of $\ZZ$, rather than $\QQ$. This distinction is of no significance for Theorems \ref{thm:Salem} and \ref{thm:Pisot-hereditary-Z}, since any rational-valued linear recurrent sequence whose characteristic polynomial has integer coefficients is a rational multiple of an integer-valued sequence. This is not the case, however, for Theorem \ref{thm:Tr-vs-Pow}, and the formulation of this result in Theorem \ref{thm:Tr-vs-Pownew} below is genuinely more general, and applies also to sequences of rational numbers such as $(3^i/2^i)_{i=0}^{\infty}$; the corresponding $\beta=3/2$ is an algebraic number, but not an algebraic integer. 

The plan of the paper is as follows. In Section 2, we introduce the notion of a generalised polynomial map on a number field as well as its basic properties. In Sections 3 and 4, we study linear recurrent sequences arising from Pisot numbers, and we prove Theorems \ref{thm:Pisot-hereditary-K} (see Theorem \ref{thm:Pisot-hereditary-Knew}) and \ref{thm:Pisot-hereditary-Z} (see Theorem \ref{thm:Pisot-hereditary-Znew}). In Section 5, we obtain a similar result for Salem numbers (Theorem \ref{thm:Salem}, see Theorem \ref{thm:Salemnew}). In Section 6, we use trace maps and finiteness results for $S$-unit equations to prove Theorem \ref{thm:Tr-vs-Pow}. Finally, in Section 7, we construct an example of a generalised polynomial subset of $\ZZ$ that is not hereditary (Theorem \ref{thm:non-hereditary}, see Theorem \ref{thm:non-hereditarynew}).

\subsection*{Notation}

We let $\NN = \{1,2,3,\dots\}$ denote the set of positive integers and $\NN_0 = \NN \cup \{0\}$ the set of nonnegative integers. For a real number $x$, we let $\ip{x}$, $\ceil{x} = -\ip{-x}$, and $\nint{x} = \ip{x+1/2}$ denote the floor, the ceiling, and the nearest integer. We also let $\fp{x} = x - \ip{x}$ and $\fpa{x} = \min
\{\fp{x},1-\fp{x}\}$ denote the fractional part and the distance to the nearest integer. All of these expressions are generalised polynomials in $x$.

\subsection*{Acknowledgements} The authors wish to thank Boris Adamczewski for helpful comments. The first-named author was supported by National Science
Centre, Poland grant number 2018/29/B/ST1/01340. The second-named author works within the framework of the LABEX MILYON (ANR-10-LABX-0070) of Universit\'e de Lyon, within the program "Investissements d'Avenir" (ANR-11-IDEX-0007) operated by the French National Research Agency (ANR).

\section{Generalised polynomial maps on number fields}

In this section, we introduce the notion of a generalised polynomial map in a couple of related contexts, that is,  for maps defined on finite dimensional real vector spaces and for maps defined on number fields. The latter notion is new, and we carefully discuss its basic properties.

\subsection{Finite-dimensional real vector spaces}

Let $V$ be a finite dimensional real vector space. The class of  real-valued generalised polynomial maps $f\colon V \to \RR$ is the smallest class of functions containing constant maps and linear functionals, and closed under addition, multiplication, and taking the integer part of a map, that is, replacing $f$ by the map $\lfloor f \rfloor$ given by $\lfloor f \rfloor (x) =\lfloor f(x)\rfloor$.

A complex-valued map $f\colon V\to \CC$ is a generalised polynomial map if the real and imaginary parts of $f$ are real-valued generalised polynomial maps. We can give an equivalent characterisation of this class as follows. Let $\lfloor \cdot\rfloor_{\CC}\colon \CC \to \ZZ[i]$ be the complex integer part (or complex floor), defined by the formula $\lfloor z\rfloor_{\CC} = \lfloor \Re z\rfloor + i \lfloor \Im z \rfloor$. One then easily checks that the class of generalised polynomial maps $f\colon V \to \CC$ is the smallest class of functions containing complex-valued constant maps, (real-valued) linear functionals, and closed under addition, multiplication, and taking the complex integer part of a map, that is, replacing $f$ by the map $\lfloor f \rfloor_\CC$ given by $\lfloor f \rfloor_{\CC} (x) = \lfloor f(x) \rfloor_{\CC}$.

\subsection{Number fields}

In this subsection we introduce the notion of a generalised polynomial map defined on a number field. Even if a number field $K$ is given as a subfield of the complex (or real) numbers, these maps are $\emph{not}$ defined as restrictions of generalised polynomial maps on $\CC$; instead, this class consists, roughly speaking, of a much wider family of maps that can be expressed using the basic algebraic operations (addition and multiplication), the (complex) floor function, complex constants, and arbitrary embeddings of $K$ into the complex numbers. Since the floor function and the fractional part function can easily be expressed in terms of each other, replacing the floor  with the fractional part leads to an alternative definition of the same class. An example of a generalised polynomial map on $K=\QQ(\sqrt{2})$ is given by $a+b\sqrt{2} \mapsto \{a-b\sqrt{2}\}$. This map cannot be obtained as a restriction to $\QQ(\sqrt{2})$ of a generalised polynomial map on $\RR$ since it has infinitely many discontinuities in the interval $(0,1)$, which is not possible for a generalised polynomial map on $\RR$.

We now state the definition. Let $K$ be a number field. Consider the real vector space $K_{\RR} = K \otimes_{\QQ} \RR$ with the embedding $\iota \colon K \to K_{\RR}, \iota(x)=x\otimes 1$.  Describing this embedding $\iota$ concretely, we get the usual map $$K \to \RR^{r_1} \times \CC^{r_2},\quad  x\mapsto (\sigma_1(x),\ldots,\sigma_{r_1}(x),\tau_{1}(x),\ldots,\tau_{r_2}(x)),$$ where $\sigma_1,\ldots,\sigma_{r_1}$ are all the real embeddings of $K$, and $\tau_1,\bar{\tau_1},\ldots, \tau_{r_2},\bar{\tau}_{r_2}$ are all the complex embeddings of $K$, grouped in pairs. A map $f\colon K \to \CC$ is a generalised polynomial map if there exists a generalised polynomial map $\tilde{f}\colon K_{\RR} \to \CC$ (defined on the finite dimensional real vector space $K_{\RR}$) such that $f=\tilde{f} \circ \iota$. For a number field $L$ (regarded as a subfield of $\CC$) by a generalised polynomial map $f \colon K \to L$ we simply mean a generalised polynomial map $f \colon K \to \CC$ whose image $f(K)$ is contained in $L$.

In the following proposition we list some basic properties of generalised polynomial maps on number fields.

\begin{proposition}\label{prop:gpnf}
Let $K$ be a number field.
\begin{enumerate}
\item \label{prop:gpnf1} The class of generalised polynomial maps $f\colon K \to \CC$ is the smallest class that contains constant maps, field embeddings $\sigma \colon K \to \CC$, and is closed under addition, multiplication, and taking the complex integer part.

\item \label{prop:gpnf6} A map $f\colon K \to \CC$ is a generalised polynomial on $K$ if and only if its real and imaginary parts are generalised polynomial maps on $K$. 

\item \label{prop:gpnf2} If $f\colon K \to \CC$ is a generalised polynomial map on $K$ and $\varphi\colon \CC \to \CC$ is any field automorphism of $\CC$, then $\varphi \circ f \colon K \to \CC$ is also a generalised polynomial map on $K$. 
\item \label{prop:gpnf3} If $f\colon K \to \CC$ is a generalised polynomial map on $K$ and $g\colon L \to K$ is a generalised polynomial map on a number field $L$ taking values in $K$, then $f\circ g \colon L \to \CC$ is a generalised polynomial map on $L$.
\item \label{prop:gpnf4} If $f\colon K \to L$ is a generalised polynomial map on $K$ taking values in a number field $L$ and $\alpha_1,\ldots,\alpha_m$ is a basis of $L$ over $\QQ$, then there exist generalised polynomial maps $f_i\colon K \to \QQ$ on $K$ such that $f=\sum_i \alpha_i f_i$.  
\item \label{prop:gpnf5} If $f\colon K\to \CC$ is a generalised polynomial map on $K$, then the map $g\colon K\to \CC$ given by $$g(x)=\begin{cases} 1&\text{if } f(x)=0;\\ 0&\text{otherwise}\end{cases}$$ is a generalised polynomial on $K$. 

\end{enumerate}

\end{proposition}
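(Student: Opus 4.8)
The plan is to establish the six parts in the order \ref{prop:gpnf1}, \ref{prop:gpnf6}, \ref{prop:gpnf2}, \ref{prop:gpnf4}, \ref{prop:gpnf3}, \ref{prop:gpnf5}, since the later items are most naturally deduced from the intrinsic description \ref{prop:gpnf1} together with the automorphism invariance \ref{prop:gpnf2}. Throughout I would use two standard facts about the number field $K$. Writing $n=[K:\QQ]$ and letting $\sigma_1,\dots,\sigma_n\colon K\to\CC$ be the field embeddings, these are distinct and hence linearly independent over $\CC$, so they form a $\CC$-basis of $\operatorname{Hom}_\QQ(K,\CC)$; consequently every $\QQ$-linear map $\lambda\colon K\to\CC$ equals $\sum_i c_i\sigma_i$ for suitable $c_i\in\CC$. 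Moreover, each embedding $\sigma\colon K\to\CC$ factors as $\sigma=\tilde\sigma\circ\iota$ where $\tilde\sigma\colon K_\RR\to\CC$ is a coordinate projection $K_\RR\to\RR$ or $K_\RR\to\CC$ in the concrete description of $K_\RR$, possibly post-composed with complex conjugation; in particular $\tilde\sigma$ is a generalised polynomial map on $K_\RR$.

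For \ref{prop:gpnf1}, write $\cG$ for the smallest class in the statement. That $\cG$ is contained in the generalised polynomial maps on $K$ is clear: constants and embeddings are witnessed on $K_\RR$ by constants and by the maps $\tilde\sigma$ above, and if $f=\tilde f\circ\iota$ and $g=\tilde g\circ\iota$ then $f+g$, $fg$, $\lfloor f\rfloor_\CC$ are witnessed by $\tilde f+\tilde g$, $\tilde f\tilde g$, $\lfloor\tilde f\rfloor_\CC$. Conversely, if $f=\tilde f\circ\iota$ with $\tilde f$ generalised polynomial on $K_\RR$, one inducts on the construction of $\tilde f$: a complex constant gives a constant map; a real-valued linear functional $\ell$ gives the $\QQ$-linear map $\ell\circ\iota$, which lies in $\cG$ by the first fact; and the steps for $+$, $\times$, $\lfloor\cdot\rfloor_\CC$ are formal. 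Part \ref{prop:gpnf6} is then immediate: writing a witness $\tilde f$ as $\tilde u+i\tilde v$ with $\tilde u,\tilde v$ real-valued generalised polynomial shows $\Re f=\tilde u\circ\iota$ and $\Im f=\tilde v\circ\iota$ are generalised polynomial on $K$, while conversely $f=(\Re f)+i(\Im f)$ is built from generalised polynomial maps via \ref{prop:gpnf1}.

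The crux is \ref{prop:gpnf2}. Given a witness $f=\tilde f\circ\iota$, I would build by recursion on the construction of $\tilde f$ a generalised polynomial map $T(\tilde f)\colon K_\RR\to\CC$ with $T(\tilde f)\circ\iota=\varphi\circ f$: on a constant $c$ put $T(c)=\varphi(c)$; on a real-valued linear functional $\ell$ use $\ell\circ\iota=\sum_i c_i\sigma_i$ and put $T(\ell)=\sum_i\varphi(c_i)\psi_i$, where $\psi_i\colon K_\RR\to\CC$ is a generalised polynomial extension of the field embedding $\varphi\circ\sigma_i$ supplied by the second fact; and make $T$ additive and multiplicative, which is compatible with $\varphi$ since $\varphi$ is a ring homomorphism. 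The one genuinely delicate case is the complex floor: a wild automorphism $\varphi$ need not commute with $\lfloor\cdot\rfloor_\CC$, so one cannot simply push $\varphi$ inside. The remedy is that $\lfloor A(\iota(x))\rfloor_\CC$ lies in $\ZZ[i]$, on which $\varphi$ acts as the identity or as complex conjugation (since $\varphi$ fixes $\ZZ$ and $\varphi(i)=\pm i$); hence $\varphi\bigl(\lfloor A(\iota(x))\rfloor_\CC\bigr)=\lfloor\Re A(\iota(x))\rfloor\pm i\lfloor\Im A(\iota(x))\rfloor$, so one may set $T(\lfloor A\rfloor_\CC)=\lfloor\Re A\rfloor\pm i\lfloor\Im A\rfloor$ with the same $A$ and no recursion inside the floor; this is generalised polynomial on $K_\RR$ because $\Re A$ and $\Im A$ are. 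A short induction then confirms $T(\tilde f)\circ\iota=\varphi\circ f$.

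The remaining parts follow quickly. For \ref{prop:gpnf4}, let $\rho_1,\dots,\rho_m\colon L\to\CC$ be the $m=[L:\QQ]$ embeddings; each extends to an automorphism $\varphi_j$ of $\CC$ (possible since $L/\QQ$ is finite), so $\rho_j\circ f=\varphi_j\circ f$ is generalised polynomial on $K$ by \ref{prop:gpnf2}. The matrix $M=(\rho_j(\alpha_i))_{j,i}$ is invertible, so $f_i:=\sum_j (M^{-1})_{ij}(\rho_j\circ f)$ is generalised polynomial on $K$ and takes values in $\QQ$, being at each $x$ the $i$-th coordinate of $f(x)$ in the basis $(\alpha_i)$, and $f=\sum_i\alpha_i f_i$. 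For \ref{prop:gpnf3}, induct on the construction of $f$ using \ref{prop:gpnf1}: constants and the operations $+$, $\times$, $\lfloor\cdot\rfloor_\CC$ are handled formally, and for a field embedding $\sigma\colon K\to\CC$ one applies \ref{prop:gpnf4} to $g$ to write $g=\sum_i\alpha_i g_i$ with $g_i\colon L\to\QQ$ generalised polynomial, whence $\sigma\circ g=\sum_i\sigma(\alpha_i)g_i$ is generalised polynomial on $L$. Finally, for \ref{prop:gpnf5}, since $1_{\{f=0\}}=1_{\{\tilde f=0\}}\circ\iota$ for a witness $\tilde f$, the statement reduces to the fact that the zero set of a generalised polynomial map on a finite-dimensional real vector space has generalised polynomial indicator, which is a standard property of generalised polynomials on real vector spaces. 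The only real obstacle is the complex-floor case of \ref{prop:gpnf2}; everything else is routine bookkeeping once the two facts about $K$ are in place.
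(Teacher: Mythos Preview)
Your proposal is correct and follows essentially the same approach as the paper: the same intrinsic description in \ref{prop:gpnf1}, the same key observation for \ref{prop:gpnf2} that any automorphism of $\CC$ acts on $\ZZ[i]$ as the identity or complex conjugation, and the same matrix inversion for \ref{prop:gpnf4}. There are only cosmetic differences in execution. For \ref{prop:gpnf2} you recurse on a witness $\tilde f$ on $K_\RR$, whereas the paper uses \ref{prop:gpnf1} directly (verifying that the class $\{f:\varphi\circ f\text{ is GP}\}$ contains the generators and is closed under the operations); the paper's route is slicker since you already proved \ref{prop:gpnf1}. You also swap the order of \ref{prop:gpnf3} and \ref{prop:gpnf4}, handling the embedding case of \ref{prop:gpnf3} via the coordinate decomposition of $g$, while the paper instead extends $\sigma$ to an automorphism of $\CC$ and invokes \ref{prop:gpnf2}; both work. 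Finally, for \ref{prop:gpnf5} the paper gives the explicit formula $g(x)=\lfloor 1-\{f(x)\}\rfloor\cdot\lfloor 1-\{\sqrt{2}f(x)\}\rfloor$ after reducing to the real-valued case, which is the ``standard property'' you invoke.
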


\begin{proof} The claims in \ref{prop:gpnf1} and \ref{prop:gpnf6} follow from a similar claim for generalised polynomial maps on $K_{\RR}$.

To prove \ref{prop:gpnf2}, fix an automorphism $\varphi$ of $\CC$, and consider the class of maps $f\colon K \to \CC$ such that $\varphi\circ f$ is a generalised polynomial map on $K$. It is clear that this class contains constant maps, field embeddings, and is closed under addition, multiplication, and the complex integer part $\lfloor \cdot \rfloor_{\CC}$ (the latter property is due to the fact that $\sigma \circ \lfloor f \rfloor_{\CC}$ is either $\lfloor f\rfloor_{\CC}$ or $\overline{ \lfloor f\rfloor_{\CC}}$, depending on whether $\sigma(i)=i$ or $\sigma(i)=-i$). Thus, the claim follows from \ref{prop:gpnf1}.

To prove \ref{prop:gpnf3}, fix a generalised polynomial map $g\colon L \to K$ and consider the family of maps $f\colon K\to \CC$ such that $f\circ g$ is a generalised polynomial map on $L$. Using \ref{prop:gpnf1}, \ref{prop:gpnf2} and the fact that each complex embedding of $K$ can be extended to an automorphism of $\CC$, we verify that this family contains all generalised polynomial maps on $K$.

To prove \ref{prop:gpnf4}, we regard $L$ as a subfield of $\CC$. Let $\sigma_1,\ldots,\sigma_m$ denote all the embeddings of $L$ into $\CC$, and extend them in an arbitary way to automorphisms of $\CC$ (denoted by the same letter). We can uniquely write  $f$ in the form $f=\sum_i \alpha_i f_i$ for some maps $f_i:K\to \QQ$. We need to prove that $f_i$ are generalised polynomial maps on $K$. Applying the automorphism $\sigma_j$ to the above equality, we get $$\sigma_j \circ f = \sum_i \sigma_j(\alpha_i) f_i, \qquad 1\leq j \leq m.$$  The matrix $[\sigma_j(\alpha_i)]_{1\leq i,j\leq m}$ is nonsingular (see e.g.\ \cite[VI, \S 4]{Langbook}), and inverting the matrix, we can write $f_i$ as linear combinations of $\sigma_j \circ f$. Thus, the fact that $f_i$ are generalised polynomial maps on $K$ follows from \ref{prop:gpnf1} and \ref{prop:gpnf2}.

To prove \ref{prop:gpnf5}, note first that \ref{prop:gpnf6} reduces the claim to the case where $f$ takes real values (with the map $g$ equal to the product of the maps corresponding to the real and imaginary parts of $f$). A real number $y$ is zero if and only if both $y$ and $\sqrt{2} y$ are integers; thus,
\[
	g(x) = \ip{ 1-\fp{f(x)} } \cdot \ipnormal{ 1- \fpnormal{\sqrt{2} f(x)}},
\]
and so $g$ is a generalised polynomial map.\end{proof}

\subsection{Sets of algebraic numbers}

We say that a subset $S$ of a number field $K$ is a generalised polynomial subset of $K$ if its characteristic function $1_S \colon K \to \RR$ is a generalised polynomial map on $K$. We should pose a warning here: sometimes, one talks about generalised polynomial subsets of $\RR$ or $\CC$; these are defined as the zero sets of generalised polynomial maps defined on (real vector spaces) $\RR$ or $\CC$. However, even when the number field $K$ is given as a subfield of $\RR$ or $\CC$, these notions do not coincide! In fact, in the above sense no number field $K$ is a generalised polynomial subset of $\CC$, while a number field $K$ is clearly a generalised polynomial subset of itself. Of course, a generalised polynomial subset of $\RR$ that happens to be contained in a number field $K$ (for example, $\QQ$) is a generalised polynomial subset of $K$.  For this reason, in this paper we shall not talk about generalised polynomial subsets of $\RR$ or $\CC$, but only about generalised polynomial subsets of number fields (or, later, algebraic numbers).

 Since $1_{S\cap T}=1_S 1_T$ and $1_{K\setminus S}=1_K-1_S$, the class of generalised polynomial subsets of $K$ is closed under finite unions, finite intersections, and complements. Proposition \ref{prop:gpnf}\ref{prop:gpnf5} says that the zero set of a generalised polynomial map $f\colon K\to \CC$ is a generalised polynomial set. Moreover, whether a set is generalised polynomial or not is invariant under translation, applying a bijective $\QQ$-linear map (or, more generally, a generalised polynomial bijection $K\to K$ with generalised polynomial inverse), as well as adding or removing finitely many elements. In the following proposition we list some examples of generalised polynomial subsets of number fields.
 
\begin{proposition}\label{prop:gpsetsexam}
Let $K$ be a number field. The following subsets of $K$ are generalised  polynomial: \begin{enumerate}
\item\label{prop:gpsetsexam0} any $\QQ$-subvector space $V$ of $K$;
\item\label{prop:gpsetsexam1}
 any subfield $L\subset K$;
\item\label{prop:gpsetsexam2}  any lattice $\Lambda\subset K$;
\item\label{prop:gpsetsexam2a} the ring $\mathcal{O}_K$ of algebraic integers in $K$; 
\item\label{prop:gpsetsexam3}  
the group of units $\mathcal{O}_K^*$;
\item\label{prop:gpsetsexam3a}
any finite index subgroup of $\mathcal{O}_K^*$; 
\item\label{prop:gpsetsexam4} 
the set of Pisot units in $K$;
\item\label{prop:gpsetsexam6} 
the set of Salem numbers in $K$.
\end{enumerate}
 (The statements in \ref{prop:gpsetsexam4} and \ref{prop:gpsetsexam6} only make sense when $K$ is given a subfield of $\RR$.)
\end{proposition}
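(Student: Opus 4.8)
The plan is to build up the list from the most elementary cases to the more structured ones, reusing the earlier items and the closure properties recorded just before the statement. For \ref{prop:gpsetsexam0}, fix a $\QQ$-basis $e_1,\dots,e_n$ of $K$ adapted to $V$, so that $V=\linspan_\QQ(e_1,\dots,e_k)$; writing $x=\sum_i x_i e_i$ with $x_i\in\QQ$, membership $x\in V$ is equivalent to $x_{k+1}=\dots=x_n=0$. Each coordinate map $x\mapsto x_i$ is $\QQ$-linear, hence a generalised polynomial map on $K$ (it extends to a linear functional on $K_\RR$), and Proposition \ref{prop:gpnf}\ref{prop:gpnf5} turns each vanishing condition into a generalised polynomial characteristic function; the product of these gives $1_V$. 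Item \ref{prop:gpsetsexam1} is the special case where $V=L$ is moreover a subfield. For \ref{prop:gpsetsexam2}, a lattice $\Lambda$ is a free $\ZZ$-module of rank $n$ spanning $K$ over $\QQ$; choosing a $\ZZ$-basis $\omega_1,\dots,\omega_n$ of $\Lambda$ as a new $\QQ$-basis of $K$ and writing $x=\sum_i y_i\omega_i$ with $y_i\in\QQ$, we have $x\in\Lambda$ iff all $y_i\in\ZZ$; since $x\mapsto y_i$ is $\QQ$-linear, $y_i\in\ZZ$ iff $\ip{1-\fp{y_i}}=1$, so $1_\Lambda=\prod_i \ip{1-\fp{y_i}}$ is a generalised polynomial map. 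Item \ref{prop:gpsetsexam2a} is immediate: $\cO_K$ is a lattice in $K$, so it falls under \ref{prop:gpsetsexam2}.

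For the group of units \ref{prop:gpsetsexam3}, recall that $u\in\cO_K$ is a unit iff $\Norm_{K/\QQ}(u)=\pm1$, equivalently $u\in\cO_K$ and $u^{-1}\in\cO_K$, but the cleanest route is via the norm. The norm form $x\mapsto \Norm_{K/\QQ}(x)=\prod_{j}\sigma_j(x)$ (product over all embeddings $\sigma_j\colon K\to\CC$, with conjugate pairs included) is a product of field embeddings, hence a generalised polynomial map $K\to\CC$ by Proposition \ref{prop:gpnf}\ref{prop:gpnf1}, and it takes rational (in fact integer, on $\cO_K$) values. Thus $\{x\in K: \Norm_{K/\QQ}(x)=1\}$ and $\{x: \Norm_{K/\QQ}(x)=-1\}$ are generalised polynomial subsets of $K$ by Proposition \ref{prop:gpnf}\ref{prop:gpnf5} (applied to $\Norm_{K/\QQ}(x)-1$ and $\Norm_{K/\QQ}(x)+1$), and intersecting their union with $\cO_K$ from \ref{prop:gpsetsexam2a} gives $\cO_K^*$. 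For \ref{prop:gpsetsexam3a}, a finite index subgroup $H\le\cO_K^*$ contains $(\cO_K^*)^d$ for some $d\ge1$; one approach is to note that $H$ is cut out inside $\cO_K^*$ by finitely many congruence-type conditions, but since $\cO_K^*$ is finitely generated, $H$ is the preimage under the $d$-th power map of a subgroup, and $u\mapsto u^d$ is polynomial hence generalised polynomial on $K$; composing with $1_{H'}$ where $H'=\{u^d:u\in\cO_K^*\}$ reduces matters, and the finitely many cosets of $H$ in $\cO_K^*$ can each be handled by translating. Care is needed here to produce a genuinely clean argument; the honest statement is that $\cO_K^*/H$ is finite, each coset $vH$ with $v$ a unit is a translate (multiplicative) of $H$, and multiplicative translation by the fixed element $v$ is a $\QQ$-linear bijection of $K$, under which generalised polynomial sets are preserved — so it suffices to realise one coset, and I would pick out $H$ itself using that $H\supseteq (\cO_K^*)^d$ together with the finitely many residues mod a suitable ideal.

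For \ref{prop:gpsetsexam4}, assume $K\subset\RR$. A Pisot unit in $K$ is a real algebraic integer $\beta>1$ all of whose other conjugates $\alpha$ satisfy $\abs\alpha<1$, and which is a unit; but "$\beta\in K$ has conjugate $\alpha$" only has content when $K$ is normal, so the right formulation is: $\beta\in K$ is a Pisot unit iff $\beta\in\cO_K^*$, $\beta>1$, and $\abs{\sigma(\beta)}<1$ for every embedding $\sigma\colon K\to\CC$ with $\sigma\ne\mathrm{id}$ (equivalently, for the real embedding of $K$ attached to the identity we ask $\beta>1$, and all other $\abs{\sigma(\beta)}<1$). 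Each condition $\abs{\sigma(\beta)}<1$ can be rewritten using the floor of $\abs{\sigma(\beta)}^2=\sigma(\beta)\bar\sigma(\beta)$, a generalised polynomial map on $K$ with real values in $[0,\infty)$: we have $\abs{\sigma(\beta)}^2<1$ iff $\ip{\abs{\sigma(\beta)}^2}=0$ provided $\abs{\sigma(\beta)}^2\ne$ integer, and we handle the boundary $\abs{\sigma(\beta)}=1$ separately — but in fact for an algebraic integer $\beta$ no conjugate other than one forced by a Salem/Pisot structure lies exactly on the unit circle, so for Pisot the strict inequality is automatic once we know $\abs{\sigma(\beta)}\le 1$; cleanly, $\{x\in K: \abs{\sigma(x)}^2\le 1\}=\{x:\ip{\abs{\sigma(x)}^2/2}=0\}$-type sets, intersected suitably, are generalised polynomial, and intersecting over all $\sigma\ne\mathrm{id}$, with $\{x>1\}$ (a half-line, generalised polynomial in the single real coordinate coming from $\mathrm{id}\colon K\hookrightarrow\RR$) and with $\cO_K^*$ from \ref{prop:gpsetsexam3}, yields the set of Pisot units. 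Item \ref{prop:gpsetsexam6} is entirely analogous: $\beta\in K$ is a Salem number iff $\beta\in\cO_K$, $\beta>1$, $\abs{\sigma(\beta)}\le 1$ for all $\sigma\ne\mathrm{id}$, and at least one $\sigma$ has $\abs{\sigma(\beta)}=1$; the first three are generalised polynomial as above, and the last is a finite union over $\sigma$ of the generalised polynomial sets $\{x:\abs{\sigma(x)}^2=1\}$ (Proposition \ref{prop:gpnf}\ref{prop:gpnf5} applied to $\sigma(x)\bar\sigma(x)-1$).

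The main obstacle I anticipate is \ref{prop:gpsetsexam3a} and the handling of strict versus non-strict inequalities in \ref{prop:gpsetsexam4}–\ref{prop:gpsetsexam6}: a generalised polynomial map on $\RR$ (and hence, through a single real coordinate, on $K$) can certainly cut out a half-line $\{y>1\}$, but expressing strict inequalities $\abs{\sigma(x)}^2<1$ as generalised polynomial conditions requires knowing the relevant value is not an integer on the set of interest, which is where one must invoke the algebraic-integer hypothesis (a conjugate of an algebraic integer that has absolute value $<1$ cannot equal $1$), rather than a purely formal manipulation. I would isolate this point as a small lemma and then assemble everything using only the closure properties and Proposition \ref{prop:gpnf}.
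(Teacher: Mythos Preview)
Your treatment of \ref{prop:gpsetsexam0}--\ref{prop:gpsetsexam3} is correct and essentially identical to the paper's (the paper reduces \ref{prop:gpsetsexam0} to a single codimension-one subspace via the trace, but your direct coordinate argument is equally valid). The difficulties lie in \ref{prop:gpsetsexam3a}, \ref{prop:gpsetsexam4}, and \ref{prop:gpsetsexam6}.

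For \ref{prop:gpsetsexam3a}, your sketch does not close. Reducing to $(\cO_K^*)^d$ via cosets is fine, but ``composing with $1_{H'}$'' is circular, and the $d$-th power map has no generalised polynomial inverse, so you cannot simply pull back. Your final remark --- picking out $H$ via ``finitely many residues mod a suitable ideal'' --- is exactly the missing ingredient, but it is a genuine theorem (Chevalley's theorem \cite{Chevalley51}) that every finite-index subgroup of $\cO_K^*$ is a congruence subgroup, i.e.\ $H=\cO_K^*\cap\Lambda$ with $\Lambda$ a finite union of cosets of a full-rank lattice. The paper invokes this explicitly; without it, \ref{prop:gpsetsexam3a} does not follow from the preceding items.

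For \ref{prop:gpsetsexam4} and \ref{prop:gpsetsexam6}, the assertion that the half-line $\{x\in K:x>1\}$ is a generalised polynomial subset of $K$ is \emph{false}. Indeed, the restriction of any generalised polynomial map on $K$ to $\ZZ\subset K$ is a generalised polynomial map on $\ZZ$; hence if $\{x>1\}$ were generalised polynomial in $K$, then $\{n\in\ZZ:n\geq 2\}$ would be generalised polynomial in $\ZZ$, which it is not (as the paper itself remarks immediately after this proposition). Your anticipated obstacle --- strict versus non-strict for $|\sigma(x)|^2<1$ --- is actually harmless: on $\cO_K^*\setminus\{0\}$ the condition $|\sigma(x)|^2<1$ is exactly $\lfloor\sigma(x)\bar\sigma(x)\rfloor=0$. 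The real problem is the sign/size condition on $\beta$ itself. The paper resolves this by replacing ``$\beta>1$'' with ``$\beta>0$'', observing that the positive units form an index-$2$ subgroup of $\cO_K^*$ (hence generalised polynomial by \ref{prop:gpsetsexam3a}, which is where Chevalley enters again); the inequality $\beta>1$ is then automatic from $|\Norm_{K/\QQ}(\beta)|=1$ together with $|\sigma(\beta)|<1$ for all $\sigma\neq\mathrm{id}$. For Salem numbers the paper likewise uses positivity together with the conditions $|\sigma(\alpha)|^2=1$ for all but two embeddings and $\sigma(\alpha)\tau(\alpha)=1$ for some pair $\sigma,\tau$.
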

\begin{proof}

For \ref{prop:gpsetsexam0}, we first note that since the family of generalised polynomial subsets is closed under taking finite intersections, we may assume that $V$ is of codimension $1$; moreover, since the notion is stable under applying a bijective $\QQ$-linear map, it is sufficient to prove that a single codimension $1$ subspace $V$ is generalised polynomial. We may thus choose $V$ to be $\{x\in K \mid \Tr_{K/{\QQ}}(x)=0\}$, in which case the claim follows from Proposition \ref{prop:gpnf}\ref{prop:gpnf5}, since $\Tr_{K/{\QQ}}$ is a generalised polynomial map on $K$. This proves \ref{prop:gpsetsexam0}, and  \ref{prop:gpsetsexam1} is an immediate corollary.

For \ref{prop:gpsetsexam2}, choose a basis $v_1,\ldots,v_m$ of $\Lambda$, and extend it to a basis $v_1,\ldots,v_n$ of $K_{\RR}$. Let $v_1^*,\ldots,v_n^*$ denote the dual basis, i.e.\ linear maps $v_i^* \colon K_{\RR} \to \RR$ such that $v_i^*(v_i) = 1$ and $v_i^*(v_j) = 0$ for $1 
\leq i,j \leq m$ with $i \neq j$. Then $\Lambda$ is the common set of zeros of the generalised polynomial maps $\fp{v_1^*},\fp{v_2^*}, \ldots \fp{v_m^*}, v_{m+1}^*, v_{m+2}^*, \ldots, v_n^*$.
Item \ref{prop:gpsetsexam2a} follows directly from \ref{prop:gpsetsexam2} since $\cO_K$ is a lattice. 

For \ref{prop:gpsetsexam3}, we characterise the units as algebraic integers $\alpha$ of norm $\Norm_{K/\QQ}(\alpha)=\pm 1$. For \ref{prop:gpsetsexam3a}, let $H$ be a subgroup of $\mathcal{O}_K^*$ of finite index. By Chevalley's theorem \cite[Thm.\ 1]{Chevalley51}, $H$ is a congruence subgroup, meaning that $H=\mathcal{O}_K^* \cap \Lambda$ for some $\Lambda$ that is a union of finitely many cosets of a lattice. It remains to recall that $\mathcal{O}_K^*$ and $\Lambda$ are generalised polynomial.

For \ref{prop:gpsetsexam4}, we characterize Pisot units $\alpha$ in $K$ by requiring that: i) $\alpha$ be a unit; ii) $\alpha$ be positive; and iii) $|\sigma({\alpha})|^2 = \sigma(\alpha)\bar{\sigma}(\alpha)$ lie in the interval $(0,1)$ for all nonidentity embeddings $\sigma \colon K\to \CC$. The fact that the first two conditions define a generalised polynomial subset follows from \ref{prop:gpsetsexam3}, since positive units form a subgroup of the group of units of index $2$; the last condition defines the common zero set of the generalised polynomial maps $n\mapsto \lfloor \sigma(n)\bar{\sigma}(n)\rfloor$ (with the element $0$ removed). 

For \ref{prop:gpsetsexam4}, we similarly characterise Salem numbers $\alpha$ in $K$ by requiring that i) $\alpha$ be a unit; ii) $\alpha$ be positive;  iii) $|\sigma(\alpha)|^2=1$ for all but two embeddings $\sigma$;  iv) $1/\alpha$ be an algebraic conjugate of $\alpha$.  For the first three conditions, we apply the same reasoning as before. The fourth condition says that there exist two embeddings $\sigma$ and $\tau$ of $K$ is $\CC$ with $\sigma(\alpha) = \tau(\alpha)^{-1}$, which is also easily expressed in terms of generalised polynomial maps. 
\end{proof}

It is worthwhile to note that the set of Pisot \emph{numbers} is in general not a generalised polynomial subset; in fact, when $K=\QQ$, the set of Pisot numbers is simply the set of integers $\geq 2$, which is not generalised polynomial; this can be inferred, for instance, from the general fact that for a generalised polynomial set $E \subset \ZZ$ the limit $\abs{E \cap [M,M+N)}/N$ converges uniformly in $M$ as $N \to \infty$; cf.\ \cite[Ex.\ B.2]{Konieczny-2021-JLM}. On the other hand, the set of Pisot numbers and their negatives is a generalised polynomial subset (by a similar argument as for Pisot units).

Let $\QQ^{\mathrm{alg}}$ denote the field of algebraic numbers. We say that a subset $S$ of $\QQ^{\mathrm{alg}}$ is generalised polynomial if $S\cap K$ is a generalised polynomial subset of $K$ for every number field $K$. From Proposition \ref{prop:gpsetsexam} we get that if $S$ is itself a subset of some number field $L$, then $S$ is a generalised polynomial subset of $\QQ^{\mathrm{alg}}$ if and only if it is a generalised polynomial subset of $L$. Proposition \ref{prop:gpsetsexam} also immediately implies the following result.

\begin{proposition}
The following sets of algebraic numbers are generalised polynomial:
\begin{enumerate} \item the ring of algebraic integers; \item the group of algebraic units; \item the set of Pisot units; \item the set of Salem numbers. \end{enumerate}
\end{proposition}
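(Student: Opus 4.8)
The plan is to unwind the definition of a generalised polynomial subset of $\QQ^{\mathrm{alg}}$ given just above: fixing an embedding $\QQ^{\mathrm{alg}} \hookrightarrow \CC$, a set $S \subseteq \QQ^{\mathrm{alg}}$ is generalised polynomial exactly when $S \cap K$ is a generalised polynomial subset of $K$ for every number field $K \subseteq \CC$. So it suffices, for each of the four sets $S$ and each $K$, to identify $S \cap K$ and to cite the relevant item of Proposition \ref{prop:gpsetsexam}. For the ring of algebraic integers one has $S \cap K = \mathcal{O}_K$, which is generalised polynomial in $K$ by Proposition \ref{prop:gpsetsexam}\ref{prop:gpsetsexam2a}; for the group of algebraic units one has $S \cap K = \mathcal{O}_K^*$, generalised polynomial by Proposition \ref{prop:gpsetsexam}\ref{prop:gpsetsexam3}. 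In these two cases no further argument is needed.

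For the set of Pisot units and the set of Salem numbers there is a small wrinkle: these sets are contained in $\QQ^{\mathrm{alg}} \cap \RR$, whereas $K$ need not be given as a subfield of $\RR$, so Proposition \ref{prop:gpsetsexam}\ref{prop:gpsetsexam4} and \ref{prop:gpsetsexam6} do not apply to $K$ directly. Since every Pisot unit and every Salem number is real, $S \cap K = S \cap (K \cap \RR)$, and $L := K \cap \RR$ is a number field given as a subfield of $\RR$; thus $S \cap L$ is a generalised polynomial subset of $L$ by Proposition \ref{prop:gpsetsexam}\ref{prop:gpsetsexam4} (resp.\ \ref{prop:gpsetsexam6}). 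It then remains to transfer this from $L$ to $K$, i.e.\ to establish the following elementary fact: if $L \subseteq K$ are number fields and $T \subseteq L$ is a generalised polynomial subset of $L$, then $T$ is a generalised polynomial subset of $K$. To prove it, fix a $\QQ$-linear retraction $p \colon K \to L$ with $p|_L = \mathrm{id}_L$; being $\QQ$-linear (its real and imaginary parts are $\RR$-linear functionals on $K_{\RR}$), $p$ is a generalised polynomial map on $K$ with values in $L$. The characteristic function of $T$ inside $K$ then equals $1_L \cdot (1_T \circ p)$: on $L$ the two sides agree because $p$ restricts to the identity, and off $L$ both vanish because $T \subseteq L$. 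Now $1_L$ is generalised polynomial on $K$ by Proposition \ref{prop:gpsetsexam}\ref{prop:gpsetsexam1} and $1_T \circ p$ is generalised polynomial on $K$ by Proposition \ref{prop:gpnf}\ref{prop:gpnf3}, so their product is too.

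There is no genuine obstacle here: the statement is a bookkeeping exercise in the definitions, and the only point that calls for a little care is the reduction to the real subfield $K \cap \RR$ together with the subfield-to-field transfer in the last two cases; everything else is a direct citation of Proposition \ref{prop:gpsetsexam}.
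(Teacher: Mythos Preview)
Your proof is correct and follows the same approach as the paper, whose proof is a single sentence asserting that the result follows immediately from Proposition~\ref{prop:gpsetsexam}. Your extra care with the case where $K \not\subset \RR$ for Pisot units and Salem numbers, and the accompanying transfer lemma from a subfield $L$ to $K$, makes explicit a detail the paper leaves to the reader; it is in the same spirit as the remark immediately preceding the proposition (that a set contained in a number field $L$ is generalised polynomial in $\QQ^{\mathrm{alg}}$ iff it is in $L$), just adapted to intersections $S \cap K \subset K \cap \RR$ rather than global containment.
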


\section{Pisot numbers: Number fields}\label{sec:Pisot:K}

In this section we prove Theorem \ref{thm:Pisot-hereditary-K}. We begin with a basic observation.

\begin{lemma}\label{lem:subset:all-beta^i}
Let $\beta$ be a Pisot unit and let $K=\QQ(\beta)$. Assume that $\cO_{K}^*$ has rank $1$. Then the set $\set{\beta^i}{i \in \NN_0}$ is a generalised polynomial subset of $K$. 
\end{lemma}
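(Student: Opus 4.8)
The plan is to exploit the rank-$1$ hypothesis on $\cO_K^*$ together with the structure of Pisot units to pin down $\{\beta^i : i \in \NN_0\}$ as a Boolean combination of the generalised polynomial sets already established in Proposition~\ref{prop:gpsetsexam}. Since $\beta$ is a Pisot unit, $K = \QQ(\beta)$ is either a real quadratic field or a cubic field with one real and one pair of complex embeddings (these are exactly the cases where Dirichlet's unit theorem gives unit rank $1$, as recalled in the introduction). Fix a fundamental unit $\eta$ with $\eta > 1$; then every element of $\cO_K^*$ is $\pm \eta^k$ for some $k \in \ZZ$, and in particular $\beta = \eta^d$ for some $d \ge 1$ (the sign is $+$ since $\beta > 1$, and $d \ge 1$ since $\beta > 1 = \eta^0$). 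Consequently $\{\beta^i : i \in \NN_0\} = \{\eta^{di} : i \in \NN_0\}$, so it suffices to understand powers of $\eta$.

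The key observation is that among the units $\{\eta^k : k \in \ZZ\}$, the ones with $k \ge 0$ are precisely the ones that are $\ge 1$, i.e.\ the units $\gamma$ with $\gamma \ge 1$; this is because $\eta > 1$ forces $\eta^k$ to be an increasing function of $k$, and $\eta^k < 1$ exactly when $k < 0$. The condition ``$\gamma$ is a unit'' cuts out a generalised polynomial set by Proposition~\ref{prop:gpsetsexam}\ref{prop:gpsetsexam3}, and the condition ``$\gamma \ge 1$'' (equivalently $\lfloor 1 - \gamma \rfloor \le -1$, or more simply $\lfloor \gamma \rfloor \ge 1$, using that $\gamma$ here ranges over reals embedded via the identity) is also generalised polynomial, being defined by a floor inequality on the identity embedding $K \hookrightarrow \RR$. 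Hence $\{\eta^i : i \in \NN_0\}$ is a generalised polynomial subset of $K$. To pass from powers of $\eta$ to powers of $\beta = \eta^d$, I would intersect with one more generalised polynomial condition that selects the subsequence $\{\eta^{di}\}$ from $\{\eta^i\}$: for instance, the image of the group homomorphism $\cO_K^* \to \ZZ/d\ZZ$ sending $\eta \mapsto 1$ has $\{\eta^{di}\}$ (among the $\eta^i$) as its kernel, and the preimage of $0$ under this map is a finite-index subgroup of $\cO_K^*$, hence generalised polynomial by Proposition~\ref{prop:gpsetsexam}\ref{prop:gpsetsexam3a}. Intersecting $\{\eta^i : i \in \NN_0\}$ with this subgroup yields exactly $\{\beta^i : i \in \NN_0\}$, which is therefore generalised polynomial as an intersection of generalised polynomial sets.

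I expect the main technical point to be the clean verification that ``positive unit $\ge 1$'' equals ``nonnegative power of the fundamental unit'' — this is where the rank-$1$ hypothesis is genuinely used, since in higher rank the units $\ge 1$ form a much larger set than a single cyclic semigroup. One should also be slightly careful that the fundamental unit is chosen with the identity (real) embedding sending it to a value $> 1$ rather than in $(-1,1)$; replacing $\eta$ by $\eta^{-1}$ or $-\eta^{\pm 1}$ if necessary fixes this, and does not affect which powers are selected. A minor alternative to the last paragraph, avoiding the homomorphism to $\ZZ/d\ZZ$, is to note directly that $\{\beta^i\} = \{\eta^j : j \in \NN_0,\ d \mid j\}$ and that the divisibility condition $d \mid j$ translates, via $\eta^j$, into membership in the finite-index subgroup $\langle \eta^d \rangle = \langle \beta \rangle$ of $\cO_K^*$; either route reduces cleanly to Proposition~\ref{prop:gpsetsexam}.
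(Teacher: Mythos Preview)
Your overall architecture---locate $\{\beta^i:i\ge 0\}$ inside $\cO_K^*$ using the rank-$1$ structure and then carve it out via Proposition~\ref{prop:gpsetsexam}---is right, and the passage from $\eta$-powers to $\beta$-powers by intersecting with the finite-index subgroup $\langle\beta\rangle$ is clean. But there is a genuine gap at the step where you separate the nonnegative powers from the negative ones.

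You assert that the condition ``$\gamma\ge 1$'' (equivalently $\lfloor\gamma\rfloor\ge 1$) defines a generalised polynomial subset of $K$. It does not. Half-spaces of this kind are precisely the sort of set that fails to be generalised polynomial: the paper observes right after Proposition~\ref{prop:gpsetsexam} that $\{n\in\ZZ:n\ge 2\}$ is not generalised polynomial (the window densities $\abs{E\cap[M,M+N)}/N$ fail to converge uniformly in $M$). Since $\ZZ$ is a lattice in $K$, hence generalised polynomial by Proposition~\ref{prop:gpsetsexam}\ref{prop:gpsetsexam2}, if $\{x\in K:x\ge 1\}$ were generalised polynomial then so would be its intersection with $\ZZ$, namely $\{n\in\ZZ:n\ge 1\}$, contradicting that observation. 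Thus a bare floor inequality cannot do the job.

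The paper's proof repairs exactly this point, and does so without the detour through the fundamental unit. It intersects the finite-index subgroup $\langle\beta\rangle=\{\beta^i:i\in\ZZ\}$ (generalised polynomial by Proposition~\ref{prop:gpsetsexam}\ref{prop:gpsetsexam3a}) with the set of Pisot units in $K$ (generalised polynomial by Proposition~\ref{prop:gpsetsexam}\ref{prop:gpsetsexam4}). The reason this works is that the one-sided condition $\gamma>1$, which is not generalised polynomial on its own, becomes \emph{equivalent among units} to the Pisot condition $\abs{\sigma(\gamma)}^2\in(0,1)$ for all non-identity embeddings $\sigma$; the latter is expressed by the vanishing of the generalised polynomial maps $\gamma\mapsto\lfloor\sigma(\gamma)\bar\sigma(\gamma)\rfloor$, not by an inequality. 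Concretely, $\beta^i$ is a Pisot unit precisely when $i\ge 1$, so the intersection is $\{\beta^i:i\in\NN\}$, and adjoining the single point $1$ gives the claim. Your argument can be salvaged the same way: in the rank-$1$ setting your fundamental unit $\eta>1$ is itself a Pisot unit (by the norm relation $\abs{\eta}\prod_{\sigma\neq\mathrm{id}}\abs{\sigma(\eta)}=1$), so the Pisot units in $K$ are exactly $\{\eta^k:k\ge 1\}$, and you may substitute Proposition~\ref{prop:gpsetsexam}\ref{prop:gpsetsexam4} for the floor-inequality step.
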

\begin{proof}
The set $\set{\beta^i}{i \in \ZZ}$ is a finite-index subgroup of the group of units $\cO_{K}^*$, and hence it is a generalised polynomial subset of $K$ by Proposition \ref{prop:gpsetsexam}\ref{prop:gpsetsexam3a}. The set of all Pisot units in $K$ is also generalised polynomial by Proposition \ref{prop:gpsetsexam}\ref{prop:gpsetsexam4}. It remains to observe that the intersection of these two sets is $\set{\beta^{i}}{i \in \NN}$.
\end{proof}

For technical reasons, it is easier to prove Theorem \ref{thm:Pisot-hereditary-K} in the case where $\beta$ is sufficiently large. Thus, we will first prove an analogous result with $\beta$ replaced with a sufficiently large power $\beta^m$.

\begin{proposition}\label{prop:subset:gamma^i-I}
Let $\beta$ be a Pisot unit and let $K=\QQ(\beta)$. Assume that $\cO_{K}^*$ has rank $1$. Then there exists $m_0$ such that for every integer $m \geq m_0$ and set $I \subset \NN_0$, the set $\set{\beta^{im}}{i \in I}$ is a generalised polynomial subset of $K$.
\end{proposition}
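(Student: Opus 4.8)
The plan is to represent an arbitrary subset $\{\beta^{im} : i \in I\}$ as the intersection of the full geometric progression $\{\beta^{im} : i \in \NN_0\}$ (which is generalised polynomial by Lemma~\ref{lem:subset:all-beta^i} together with Proposition~\ref{prop:gpsetsexam}\ref{prop:gpsetsexam3a}, since it is a finite-index subgroup of $\cO_K^*$ intersected with the Pisot units) with a set carved out by a suitable bracket condition that encodes membership of $i$ in $I$. Concretely, writing $\beta^{im}$ for $i \geq 1$, I want to recover the exponent $i$ from the element $x = \beta^{im}$ by a generalised polynomial expression in $x$, and then feed $i$ into the characteristic-function-of-$I$ machinery. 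The key point that makes this possible is that, since $\cO_K^*$ has rank $1$ and $\beta$ is a Pisot unit, there is a nonidentity embedding $\sigma$ (or a pair of conjugate complex embeddings) under which $|\sigma(\beta)| < 1$, so $\sigma(\beta^{im}) = \sigma(\beta)^{im}$ is exponentially small; this is precisely why one passes to a large power $\beta^m$, so that the various conjugates of $\beta^{im}$ separate cleanly and the "digits'' one extracts are genuinely $0$ or $1$.

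The main steps, in order, are as follows. First, I would fix an embedding realising the Pisot contraction and note that for $x = \beta^{im}$ one has a clean two-sided estimate $c^{-1}\theta^{im} \leq |\sigma(x)| \leq c\,\theta^{im}$ with $0 < \theta < 1$; taking a logarithm (which is \emph{not} itself a generalised polynomial operation, so this has to be done with care — probably one instead uses that $i$ can be read off from which dyadic-type window $|\sigma(x)|$ falls into, or uses the trace/additive structure rather than logs). Second, I would build a generalised polynomial map $K \to \QQ$ that sends $\beta^{im}$ to (an expression determining) $i$; here the largeness of $m$ from $m_0$ ensures the relevant quantities are genuinely separated so that $\floor{\cdot}$-expressions select the right integer. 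Third, I would invoke the fact — which needs to be available — that for any $I \subset \NN_0$ there is a generalised polynomial map $g_I \colon \QQ \to \QQ$ (or $\RR \to \RR$ restricted appropriately) whose zero set on the nonnegative integers is exactly $I$; composing with the exponent-extraction map and intersecting with $\{\beta^{im} : i \in \NN_0\}$ via Proposition~\ref{prop:gpnf}\ref{prop:gpnf3} and \ref{prop:gpnf5} yields the desired set. The element $i=0$ (i.e.\ $x=1$) is handled separately by adding or removing a single point, which preserves the generalised polynomial property.

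The hard part will be the exponent-extraction step: producing an honest generalised polynomial map on $K$ that reads off $i$ from $\beta^{im}$, given that the natural device (taking a logarithm) is forbidden. I expect the resolution to route through an auxiliary representation of powers of a Pisot number by a "greedy'' / beta-expansion type recursion, or through the companion-matrix action on a lattice, in such a way that each exponent $i$ corresponds to a fixed finite bracket pattern whose correctness is guaranteed once $m \geq m_0$ makes the error terms small enough not to spill over. Everything else — closure under intersection, the finite-index-subgroup and Pisot-unit inputs, composition, and the single-point adjustment for $i=0$ — is routine given the results already established in Section~2 and Lemma~\ref{lem:subset:all-beta^i}.
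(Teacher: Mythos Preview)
Your plan has a genuine gap at the third step. You assume that ``for any $I \subset \NN_0$ there is a generalised polynomial map $g_I \colon \QQ \to \QQ$ whose zero set on the nonnegative integers is exactly $I$''. This is false: not every subset of $\NN_0$ is a generalised polynomial subset of $\ZZ$ (or $\QQ$). Indeed, the paper itself recalls in the introduction that $\{k^i : i \in \NN_0\}$ is \emph{not} a generalised polynomial subset of $\ZZ$ for any integer $k \geq 2$. So even if you could extract the exponent $i$ from $\beta^{im}$ by a generalised polynomial map (and you correctly flag that this step is already dubious, since it is essentially a logarithm), composing with a putative characteristic function of $I$ cannot work for arbitrary $I$.

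The paper's argument avoids both problems simultaneously by a different mechanism: it encodes the \emph{entire} set $I$ into a single real constant
\[
\xi = \sum_{i \in I} \frac{\beta}{\gamma^i}, \qquad \gamma = \beta^m,
\]
and then observes, using the Pisot property and the integrality of $\Tr_{K/\QQ}(\beta^n)$, that $\fpa{\gamma^j \xi} = 1_I(j)\,\fpa{\beta} + O(\rho^{-m})$. For $m$ large this error is smaller than $\tfrac{1}{3}\fpa{\beta}$, so the single bracket condition $\fpa{\xi x} \geq \tfrac{2}{3}\fpa{\beta}$, intersected with the generalised polynomial set $\{\gamma^j : j \in \NN_0\}$ from Lemma~\ref{lem:subset:all-beta^i}, cuts out exactly $\{\gamma^j : j \in I\}$. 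The point is that generalised polynomial maps are allowed arbitrary real constants, so the uncountable amount of information in $I$ can legitimately be stored in $\xi$; no exponent extraction and no generalised polynomial characteristic function of $I$ on $\ZZ$ is needed.
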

\begin{proof}

Since $\beta$ is a Pisot number, we can find $\rho > 1$ such that $\rho < \beta$ and for all conjugates $\alpha$ of $\beta$ other than $\beta$ itself we have  $\abs{\alpha} < 1/\rho$. Let $m \geq m_0$ be a large integer, where $m_0>0$ remains to be determined in the course of the argument, and let $\gamma := \beta^m$.
Consider the real number
\begin{equation}\label{eq:sub:01:xi}\
	\xi = \sum_{i \in I} \frac{\beta}{\gamma^{i}}.
\end{equation}	
For any integer $i$, the trace $\Tr_{K/\QQ}(\beta^i)$ is an integer. This implies that $\fpa{\beta^i}=O(1/\rho^{i})$ for $i\geq 0$; on the other hand, for $i\leq 0$ we trivially have $\fpa{\beta^i}=O(1/\rho^{|i|})$. Thus, for $i,j \in \NN$, we have the estimates
\begin{equation}\label{eq:sub:01:01}
	\fpa{\beta \gamma^{j-i}} = \fpa{\beta^{1 + m(j-i)}}= 
	\begin{cases}
		\fpa{\beta} &\text{if } i = j;\\
		O(1/\rho^{m|i-j|}) &\text{if } i \neq j,
	\end{cases}
\end{equation}	
where the constant implicit in the $O(\cdot)$-notation depends on $\beta$, but not on $m$. As a consequence, taking the sum over all $i \in I$, we obtain
\begin{equation}\label{eq:sub:01:n-xi}
	\fpa{\gamma^j \xi} 
	= 1_{I}(j) \fpa{\beta} + O\braBig{\sum_{i \in I \setminus \{j\}} 1/\rho^{m\abs{i-j}}} 
	= 1_{I}(j) \fpa{\beta} + O\bra{1/\rho^{m}}.
\end{equation}	
Assume that $m$ is large enough so that the error term in \eqref{eq:sub:01:n-xi} is strictly smaller than $\fpa{\beta}/3$.
Then for every $j \in \NN$ we have the equivalence
\begin{equation}\label{eq:sub:01:n-xi-<=>}
	j \in I \quad \text{if and only if}\quad \fpa{\gamma^j \xi} \geq \frac{2}{3}\fpa{\beta}.
\end{equation}
Let $g \colon K \to \{0,1\}$ be given by
\[
	g(x) = 
	\begin{cases}
		1 & \text{if } \fpa{\gamma^j x} \in \left[\frac{2}{3}\fpa{\beta},\frac{1}{2}\right];\\
		0 & \text{otherwise}.
	\end{cases}
\] 
We deduce from Proposition \ref{prop:gpnf}\ref{prop:gpnf5} that $g$ is a generalised polynomial map. By Lemma \ref{lem:subset:all-beta^i}, the set $\set{\gamma^i}{i \in \NN_0}$ is a generalised polynomial subset of $K$. It follows from the preceding discussion that for all $x \in \set{\gamma^i}{i \in \NN_0}$ we have $g(x) = 1$ if and only if $x \in \set{\gamma^i}{i \in I} = \set{\beta^{im}}{i \in I}$. 
\end{proof}

\begin{theorem}[= Theorem \ref{thm:Pisot-hereditary-K}]\label{thm:Pisot-hereditary-Knew}
	Let $\beta$ be a Pisot unit such that $\cO_{\QQ(\b)}^*$ has rank $1$. Let $I$ be an arbitrary subset of $\NN_0$. Then the set $\set{\beta^i}{i \in I}$ is a generalised polynomial subset of $\QQ(\b)$. 
\end{theorem}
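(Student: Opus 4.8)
The plan is to reduce the general statement to Proposition \ref{prop:subset:gamma^i-I} by splitting the exponent set $I$ into arithmetic progressions with a common large difference $m$. First I would fix an integer $m \geq m_0$, where $m_0 = m_0(\beta)$ is the constant supplied by Proposition \ref{prop:subset:gamma^i-I}, and write $\gamma = \beta^m$. For each residue $r \in \{0,1,\dots,m-1\}$ set $J_r = \set{j \in \NN_0}{mj+r \in I}$; then $I$ is the disjoint union of the sets $\set{mj+r}{j \in J_r}$ as $r$ ranges over $\{0,\dots,m-1\}$, so that
\[
	\set{\beta^i}{i \in I} = \bigcup_{r=0}^{m-1}\ \beta^r \cdot \set{\gamma^j}{j \in J_r}.
\]

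Next I would observe that each set appearing on the right is generalised polynomial. Indeed, applying Proposition \ref{prop:subset:gamma^i-I} with the set $J_r$ in place of $I$ shows that $\set{\gamma^j}{j \in J_r} = \set{\beta^{jm}}{j \in J_r}$ is a generalised polynomial subset of $K = \QQ(\beta)$. Since multiplication by the nonzero element $\beta^r$ is a bijective $\QQ$-linear map $K \to K$, and the property of being a generalised polynomial subset of $K$ is invariant under such maps, each set $\beta^r \cdot \set{\gamma^j}{j \in J_r}$ is a generalised polynomial subset of $K$ as well. Finally, the class of generalised polynomial subsets of $K$ is closed under finite unions, so the displayed union $\set{\beta^i}{i \in I}$ is a generalised polynomial subset of $K$, which is exactly the assertion.

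All the substantive work has already been done in Proposition \ref{prop:subset:gamma^i-I} — in particular, the construction of the auxiliary real number $\xi = \sum_{i \in I}\beta/\gamma^i$ whose orbit under multiplication by $\gamma$ detects membership in $I$ through the distance-to-the-nearest-integer function, together with the rapid decay $\fpa{\beta^i} = O(\rho^{-i})$ coming from integrality of traces. Consequently I do not expect any real obstacle at this stage; the only points deserving a line of care are the bookkeeping of the residue-class decomposition of $I$ and the remark that scaling $\set{\gamma^j}{j \in J_r}$ by $\beta^r$ keeps us inside $K$, and hence inside the realm of generalised polynomial subsets of $K$, rather than forcing us to work in some larger field.
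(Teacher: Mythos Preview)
Your proposal is correct and follows essentially the same approach as the paper: both split $I$ into residue classes modulo a sufficiently large $m$, apply Proposition \ref{prop:subset:gamma^i-I} to each class, and then use closure of generalised polynomial sets under dilations (bijective $\QQ$-linear maps) and finite unions. The only difference is cosmetic---the paper phrases the reduction as ``we may freely assume $a=0$'' rather than writing out the union explicitly.
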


\begin{proof}
	Let $m$ be an integer that is sufficiently large for the conclusion of Proposition \ref{prop:subset:gamma^i-I} to hold. Since generalised polynomial sets are closed under finite unions, it suffices to show that for each $0 \leq a <m$, the set $\set{\beta^i}{i \in I,\ i \equiv a \bmod{m}}$ is generalised polynomial. Since generalised polynomial sets are also invariant under dilation, we may freely assume that $a = 0$. The statement now follows from Proposition \ref{prop:subset:gamma^i-I}.
\end{proof}

\begin{remark}
	Similar techniques could also be applied in the situation where $\beta$ is an arbitrary Pisot unit without assumming that the unit group $\cO_{\QQ(\beta)}^*$ has rank $1$. However, in this case we no longer know whether or not the set $\set{\beta^i}{i \in \NN_0} \subset \QQ(\beta)$ is generalised polynomial. As a consequence, we would only obtain a relative result; namely, for each set $I \subset \NN_0$, the set $\set{\beta^i}{i \in I}$ is a generalised polynomial subset of $\set{\beta^i}{i \in \NN_0}$. In other words, there exists a generalised polynomial set $S \subset \QQ(\beta)$ such that for $i \in \NN_0$ we have $\beta^i \in S$ if and only if $i \in I$. Since it is not clear how interesting this generalisation is, we do not go into the details at this point.
\end{remark}

\section{Pisot numbers: Integers}\label{sec:Pisot:Z}

In this section, we prove Theorem \ref{thm:Pisot-hereditary-Z}. We first recall a well known fact on linear recurrent sequences.

\begin{lemma}\label{lem:lrseqform}
Let $\beta$ be an algebraic number, let $K=\QQ(\beta)$, and let $m=[K:\QQ]$ be the degree of $\beta$. Let $(n_i)_{i=0}^{\infty}$ be a linear recurrent sequence with rational values and with characteristic polynomial the minimal polynomial of $\beta$. \begin{enumerate} \item\label{lem:lrseqform1}  There exists a unique $x\in K$ such that $$n_i = \Tr_{K/\QQ}(\beta^i x) \quad \text{for all } i\geq 0.$$
\item\label{lem:lrseqform2} Assume moreover that  $(n_i)_{i=0}^{\infty}$ is not identically zero. Suppose that  $(n'_i)_{i=0}^{\infty}$ is another linear recurrent sequence with characteristic polynomial the minimal polynomial of $\beta$ and taking values in some extension $L$ of $\QQ$. Then the sequence $(n'_i)_{i=0}^{\infty}$ can be written as a linear combination of the sequences $(n_{i+j})_{i=0}^{\infty}$, $0\leq j<m$, with coefficients in $L$.\end{enumerate}
\end{lemma}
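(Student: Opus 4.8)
\textbf{Proof plan for Lemma \ref{lem:lrseqform}.}

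For part \ref{lem:lrseqform1}, the plan is to use the non-degeneracy of the trace form. Write $m = [K:\QQ]$ and fix a $\QQ$-basis $\omega_1, \dots, \omega_m$ of $K$. I would first observe that for any $x \in K$, the sequence $i \mapsto \Tr_{K/\QQ}(\beta^i x)$ satisfies the linear recurrence with characteristic polynomial the minimal polynomial $p(X)$ of $\beta$: indeed, if $p(X) = X^m - \sum_{j=0}^{m-1} a_j X^j$, then $\beta^{i+m} = \sum_j a_j \beta^{i+j}$, and applying $\Tr_{K/\QQ}(\,\cdot\, x)$, which is $\QQ$-linear, gives the recurrence for the trace sequence. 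So the map $x \mapsto (\Tr_{K/\QQ}(\beta^i x))_{i \geq 0}$ sends $K$ into the $m$-dimensional $\QQ$-vector space of such linear recurrent sequences (a sequence satisfying a depth-$m$ recurrence is determined by its first $m$ values). This map is $\QQ$-linear. To see it is injective, suppose $\Tr_{K/\QQ}(\beta^i x) = 0$ for all $i \geq 0$; since $1, \beta, \dots, \beta^{m-1}$ form a $\QQ$-basis of $K$ (as $\beta$ has degree $m$), linearity gives $\Tr_{K/\QQ}(y x) = 0$ for all $y \in K$, and non-degeneracy of the trace form on the separable (characteristic zero) extension $K/\QQ$ forces $x = 0$. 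A linear injection between two $\QQ$-vector spaces of the same finite dimension $m$ is a bijection, which yields existence and uniqueness of the desired $x$.

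For part \ref{lem:lrseqform2}, the plan is as follows. Both the space of $L$-valued sequences satisfying the recurrence with characteristic polynomial $p$ and the space spanned by the shifts $(n_{i+j})_{i \geq 0}$, $0 \leq j < m$, sit inside the $m$-dimensional $L$-vector space $W$ of $L$-valued linear recurrent sequences with that characteristic polynomial (again, such a sequence is pinned down by its first $m$ terms). It therefore suffices to show that the shifts $(n_{i+j})_{i \geq 0}$ for $0 \leq j < m$ are $L$-linearly independent, for then they form a basis of $W$ and in particular span $(n'_i)_{i \geq 0}$. Suppose $\sum_{j=0}^{m-1} c_j n_{i+j} = 0$ for all $i \geq 0$, with $c_j \in L$. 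Using part \ref{lem:lrseqform1}, write $n_i = \Tr_{K/\QQ}(\beta^i x)$ with $x \in K$, $x \neq 0$ (since $(n_i)$ is not identically zero). I would pass to a splitting field: let $M$ be a number field containing $K$, $L$ and all conjugates of $\beta$, and work with an embedding $K \hookrightarrow M$. The relation becomes $\Tr_{K/\QQ}\!\big(\beta^i x \sum_j c_j \beta^j\big) = 0$ for all $i \geq 0$; setting $q(X) = \sum_{j} c_j X^j \in L[X]$, this reads $\Tr_{K/\QQ}(\beta^i x\, q(\beta)) = 0$ for all $i \geq 0$, hence — exactly as in part \ref{lem:lrseqform1} but now over the field $L$ (or $M$) — $x\, q(\beta) = 0$ by non-degeneracy of the trace pairing extended $L$-bilinearly, using $\{\beta^i\}_{0\le i<m}$ as a spanning set. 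Since $x \neq 0$, we get $q(\beta) = 0$; but $\deg q < m = \deg \beta$, so $q = 0$, i.e. all $c_j = 0$. This establishes the linear independence and completes the proof.

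The main subtlety — the step I would be most careful about — is the argument that $\Tr_{K/\QQ}(\beta^i z) = 0$ for all $i \geq 0$ (with $z \in K$) implies $z = 0$, and its $L$-linear analogue in part \ref{lem:lrseqform2}. The point is that $1, \beta, \dots, \beta^{m-1}$ span $K$ over $\QQ$, so vanishing of the trace against all powers of $\beta$ upgrades to vanishing against all of $K$, and then non-degeneracy of the trace form (valid since $\QQ$ has characteristic zero, so $K/\QQ$ is separable) finishes it. In part \ref{lem:lrseqform2} one should note that the coefficients $c_j$ live in $L$, not $\QQ$, so one either tensors the trace form up to $L$ (it remains non-degenerate over any field extension of $\QQ$ disjoint from considerations of characteristic) or, more concretely, observes that $x\,q(\beta) \in K \otimes_\QQ L$ and that $\Tr_{K/\QQ} \otimes \mathrm{id}_L$ is still non-degenerate; either way the conclusion $x\,q(\beta)=0$ holds, and since $K \otimes_\QQ L$ contains $K$ as a subring with $x \neq 0$ a unit times... — more safely, embed everything into a common number field $M$ as above so that $x q(\beta)$ is an honest element of $M$ and the vanishing is literal. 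Everything else is bookkeeping with dimensions of recurrence-sequence spaces.
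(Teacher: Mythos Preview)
Your proposal is correct and follows essentially the same approach as the paper: both parts hinge on the non-degeneracy of the trace form, with part \ref{lem:lrseqform1} being a dimension count and part \ref{lem:lrseqform2} reducing to the fact that the shifts $(n_{i+j})_{i\ge 0}$ correspond under the bijection of \ref{lem:lrseqform1} to the $\QQ$-basis $\{\beta^j x\}_{0\le j<m}$ of $K$. The paper's proof of \ref{lem:lrseqform2} is slightly more streamlined---it observes directly that the shifts form a $\QQ$-basis of the rational solution space $V$ and then extends scalars to $L$---which sidesteps the $L$-coefficient subtlety you flag; your tensor-product remark handles it correctly, though the ``embed in a common $M$'' variant would need the same tensored-trace interpretation to make $\Tr_{K/\QQ}(\beta^i\, x q(\beta))$ meaningful when $q(\beta)\notin K$.
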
 
\begin{proof}
The vector space $V$ of all rational-valued linear recurrent sequences with characteristic polynomial the minimal polynomial of $\beta$ is clearly $m$-dimensional. Since all sequences of the form $(\Tr_{K/\QQ}(\beta^i x))_{i=0}^{\infty}$ lie in this space, \ref{lem:lrseqform1} follows from the nondegeneracy of the bilinear map $(x,y)\mapsto \Tr_{K/\QQ}(xy)$. To prove \ref{lem:lrseqform2}, write $(n'_i)_{i=0}^{\infty}$ as an $L$-linear combination of sequences in $V$, and apply \ref{lem:lrseqform1}.
\end{proof}

 An elementary but key fact about integer-valued sequences satisfying a Pisot linear recurrence is that each successive term can be computed by a simple generalised polynomial formula involving only the previous term. We record this in the following lemma.

\begin{lemma}\label{lem:subset:n_i->n_i+1}
	Let $(n_i)_{i=0}^\infty$ be an integer-valued sequence satisfying a linear recurrence whose characteristic polynomial is the minimal polynomial of a Pisot number $\beta$. Then for each $j\geq 0$ there exists some $i_0$ such that for all integers $i\geq i_0$ we have 
	\(
		n_{i+j} = \nint{\beta^j n_i}.
	\)
\end{lemma}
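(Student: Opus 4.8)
The plan is to use the representation of the sequence from Lemma \ref{lem:lrseqform}\ref{lem:lrseqform1}, writing $n_i = \Tr_{K/\QQ}(\beta^i x)$ for some fixed $x \in K = \QQ(\beta)$, and then to estimate the difference $\beta^j n_i - n_{i+j}$ using the Pisot property. Concretely, let $\sigma_1 = \mathrm{id}, \sigma_2, \dots, \sigma_d$ be the embeddings of $K$ into $\CC$, so that $\beta_\ell := \sigma_\ell(\beta)$ runs over the conjugates of $\beta$; by the Pisot hypothesis, $\abs{\beta_\ell} < 1$ for $\ell \geq 2$. Then
\[
	n_{i+j} = \Tr_{K/\QQ}(\beta^{i+j} x) = \sum_{\ell=1}^{d} \sigma_\ell(x) \beta_\ell^{i+j} = \beta^j \sigma_1(x) \beta^i + \sum_{\ell=2}^{d} \sigma_\ell(x)\beta_\ell^{i+j}.
\]
On the other hand, $n_i = \sigma_1(x)\beta^i + \sum_{\ell=2}^d \sigma_\ell(x)\beta_\ell^i$, so $\beta^j n_i = \beta^j \sigma_1(x)\beta^i + \beta^j \sum_{\ell=2}^d \sigma_\ell(x)\beta_\ell^i$. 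Subtracting, the main term $\beta^j \sigma_1(x)\beta^i$ cancels and we are left with
\[
	\beta^j n_i - n_{i+j} = \sum_{\ell=2}^{d} \sigma_\ell(x)\bra{\beta^j - \beta_\ell^j}\beta_\ell^i,
\]
which tends to $0$ as $i \to \infty$ because each $\abs{\beta_\ell} < 1$ for $\ell \geq 2$ (here $j$ is fixed, so the factor $\beta^j - \beta_\ell^j$ is a fixed constant).

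The next step is to make this quantitative: there is a constant $C = C(\beta, x, j)$ and a $\theta \in (0,1)$ (e.g.\ $\theta = \max_{\ell \geq 2}\abs{\beta_\ell}$) with $\abs{\beta^j n_i - n_{i+j}} \leq C\theta^i$. Choosing $i_0$ so large that $C\theta^{i_0} < 1/2$, we get $\abs{\beta^j n_i - n_{i+j}} < 1/2$ for all $i \geq i_0$. Since $n_{i+j}$ is an integer, this forces $n_{i+j} = \nint{\beta^j n_i}$, the nearest integer to $\beta^j n_i$, which is exactly the claimed formula. One should note that $i_0$ is allowed to depend on $j$, which the statement permits.

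I do not expect any genuine obstacle here; the only mild subtlety is the degenerate case where the sequence $(n_i)$ is identically zero (or, more relevantly, where $\sigma_1(x) = 0$), but even then the formula $n_{i+j} = \nint{\beta^j n_i}$ holds trivially for large $i$ since both sides vanish — or, more cleanly, one observes that if $(n_i)$ is not identically zero then $x \neq 0$ and the minimal polynomial of $\beta$ being the characteristic polynomial forces $\sigma_1(x)$ to be nonzero (otherwise the conjugates would give a recurrence of smaller order), though this point is not even needed for the estimate above. The essential content is just the exponential separation between the dominant conjugate $\beta$ and the remaining conjugates, which is precisely the defining feature of a Pisot number, combined with the integrality of the $n_i$.
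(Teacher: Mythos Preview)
Your proof is correct and follows essentially the same approach as the paper: both write $n_i$ as a sum $\sum_\alpha c_\alpha \alpha^i$ over the conjugates of $\beta$ (the paper via Lemma~\ref{lem:lrseqform}\ref{lem:lrseqform1}, you via the trace formula, which is the same thing), observe that the Pisot condition forces $n_{i+j} - \beta^j n_i \to 0$, and conclude by integrality of $n_{i+j}$. Your version is slightly more explicit in isolating the constant and threshold, but there is no substantive difference.
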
 
\begin{proof}
Lemma \ref{lem:lrseqform}\ref{lem:lrseqform1} allows us to write the sequence $(n_i)_{i=0}^{\infty}$ in the form
\begin{align}\label{eq:sub:34:0}
	n_i = \sum_{\a} c_{\a} \a^i,
\end{align}
where the sum runs over all conjugates $\a$ of $\b$, and $c_\a$ are complex constants. Since $\b$ is Pisot, we have $\a^i \to 0$ as $i \to \infty$ for each $\a \neq \b$, and hence 
\begin{align}\label{eq:sub:34:1}
		n_i - c_\b \beta^i \to 0 \quad \text{as } i \to \infty.
\end{align}
It follows that
\begin{align}\label{eq:sub:34:2}
		n_{i+j} - \beta^j n_i \to 0 \quad \text{as } i \to \infty.
\end{align}
Thus, $n_{i+j} = \nint{\beta^j n_i}$ for sufficiently large $i$.
\end{proof}

Lemma \ref{lem:subset:n_i->n_i+1} allows us to pass between the terms of any two linear recurrent sequences satisfying the same Pisot linear recurrence by applying a generalised polynomial map.

\begin{proposition}\label{lem:subset:n_i<->n_i'}
	Let $\beta$ be a Pisot number and let $K=\QQ(\beta)$. Let $(n_i)_{i=0}^\infty$ and $(n_i')_{i=0}^\infty$ be two sequences taking values in $\QQ(\beta)$ and satisfying a linear recurrence whose characteristic polynomial is the minimal polynomial of $\beta$. Assume also that $(n_i)_{i=0}^\infty$ is not identically zero. Then there exists a generalised polynomial map $g \colon \QQ(\beta) \to  \QQ(\beta)$ such that $g(n_i) = n_i'$ for all but finitely many positive integers $i$. 
\end{proposition}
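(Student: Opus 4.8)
The plan is to connect the two sequences through a common reference sequence and the representation from Lemma~\ref{lem:lrseqform}, then transport information along the Pisot recurrence using Lemma~\ref{lem:subset:n_i->n_i+1}. First I would invoke Lemma~\ref{lem:lrseqform}\ref{lem:lrseqform2}: since $(n_i)_{i=0}^\infty$ is not identically zero, the sequence $(n_i')_{i=0}^\infty$ can be written as an $L$-linear combination (here $L=\QQ(\beta)$) of the shifted sequences $(n_{i+j})_{i=0}^\infty$ for $0\le j<m$, say $n_i' = \sum_{j=0}^{m-1} \lambda_j\, n_{i+j}$ with $\lambda_j\in\QQ(\beta)$. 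So it suffices to produce, for each fixed $j$, a generalised polynomial map sending $n_i$ to $n_{i+j}$ for all large $i$; then one takes the $\QQ(\beta)$-linear combination of these maps (linear maps on $K$ and multiplication by the constants $\lambda_j$ are generalised polynomial, and the class is closed under addition), which is legitimate since generalised polynomial maps on $K$ are closed under addition and multiplication by constants.

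The core step is therefore: given an integer-valued Pisot-recurrent sequence, the map $n_i\mapsto n_{i+j}$ agrees with the generalised polynomial $x\mapsto \nint{\beta^j x}$ for all sufficiently large $i$ — this is exactly Lemma~\ref{lem:subset:n_i->n_i+1}, noting that $x\mapsto\beta^j x$ is $\QQ$-linear on $K$ hence generalised polynomial, and $\nint{\cdot}$ (nearest integer) is generalised polynomial. The one gap is that Lemma~\ref{lem:subset:n_i->n_i+1} is stated for \emph{integer-valued} sequences, whereas here $(n_i)$ and $(n_i')$ take values in $\QQ(\beta)$. To handle this I would first reduce to the integer-valued case: by clearing denominators there is a nonzero integer $N$ such that $(N n_i)_{i=0}^\infty$ is integer-valued (its characteristic polynomial is still the minimal polynomial of $\beta$, which has integer coefficients since $\beta$ is an algebraic integer — wait, $\beta$ need only be a Pisot \emph{number} here, so its minimal polynomial is monic with integer coefficients, so this is fine). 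Then I would apply the $\nint{\cdot}$ trick to the integer-valued sequence $(N n_i)$, obtaining $N n_{i+j} = \nint{\beta^j (N n_i)}$ for large $i$; dividing by $N$ recovers $n_{i+j}$ as a generalised polynomial function of $n_i$ (precompose with multiplication by $N$, apply $\nint{\cdot}$, scale by $1/N$ — all generalised polynomial on $K$).

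Assembling: define $g(x) = \sum_{j=0}^{m-1} \lambda_j \cdot \tfrac1N \nint{\beta^j N x}$, a generalised polynomial map $\QQ(\beta)\to\QQ(\beta)$. For all $i$ large enough that every term $Nn_{i+j}=\nint{\beta^j Nn_i}$ holds (finitely many exceptions, one bound per $j$, so finitely many bad $i$ overall), we get $g(n_i)=\sum_j \lambda_j n_{i+j}=n_i'$. The main obstacle, which is really a technicality, is keeping track of the denominator $N$ and the finitely many small indices $i$ where the nearest-integer identities fail; everything else is a direct assembly of Lemma~\ref{lem:lrseqform}, Lemma~\ref{lem:subset:n_i->n_i+1}, and the closure properties in Proposition~\ref{prop:gpnf}. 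One should double-check that "not identically zero" is genuinely needed (it is — it is the hypothesis of Lemma~\ref{lem:lrseqform}\ref{lem:lrseqform2}) and that the problem does not require $g(n_i)=n_i'$ for \emph{all} $i$, only cofinitely many, which matches what the argument delivers.
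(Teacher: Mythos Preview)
Your overall strategy---write $n_i' = \sum_j \lambda_j n_{i+j}$ via Lemma~\ref{lem:lrseqform}\ref{lem:lrseqform2}, then realise each shift by $x\mapsto\nint{\beta^j x}$ via Lemma~\ref{lem:subset:n_i->n_i+1}---is the same as the paper's, but your reduction to the integer-valued case is where it breaks. The sequences $(n_i)$ here take values in $\QQ(\beta)$, not in $\QQ$, so ``clearing denominators'' by an integer $N$ cannot make $(N n_i)$ lie in $\ZZ$: if $n_i=\beta^i$ then $N\beta^i\notin\ZZ$ for any $i\ge 1$. Worse, Lemma~\ref{lem:lrseqform}\ref{lem:lrseqform2} itself requires the \emph{first} sequence to be $\QQ$-valued, and its conclusion can genuinely fail otherwise: with $n_i=\beta^i$ all the shifts $(n_{i+j})_i=\beta^j\cdot(n_i)_i$ are $K$-scalar multiples of one another, so their $K$-span is one-dimensional and cannot contain, say, $n_i'=\Tr_{K/\QQ}(\beta^i)$. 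Thus neither lemma is available to you at the point where you invoke it.

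The paper's fix is to do the reduction \emph{before} invoking either lemma: replace $(n_i)$ by $\tilde n_i:=\Tr_{K/\QQ}(\xi n_i)$ for a suitably chosen $\xi\in K$. The map $x\mapsto\Tr_{K/\QQ}(\xi x)$ is $\QQ$-linear on $K$, hence a generalised polynomial map; one picks $\xi$ so that $(\tilde n_i)$ is not identically zero (nondegeneracy of the trace form) and, after a further integer scaling, $\ZZ$-valued. Now Lemma~\ref{lem:lrseqform}\ref{lem:lrseqform2} applies to the $\QQ$-valued $(\tilde n_i)$ and $K$-valued $(n_i')$, and Lemma~\ref{lem:subset:n_i->n_i+1} applies to the integer-valued $(\tilde n_i)$, yielding a generalised polynomial $\tilde g$ with $\tilde g(\tilde n_i)=n_i'$ for large $i$; composing gives $g(x)=\tilde g(\Tr_{K/\QQ}(\xi x))$. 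Once this projection step is inserted at the start, the rest of your argument goes through unchanged.
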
	
\begin{proof}
Replacing $n_i$ with $\Tr_{K/\QQ}(\xi n_i)$ for suitably chosen $\xi \in K$, we may freely assume that $n_i$ are integers for all $i$. Let $m$ denote the degree of $\beta$.  By Lemma \ref{lem:lrseqform}\ref{lem:lrseqform2}, we may express $(n_i')_{i=0}^{\infty}$  as
	\[
		n_{i}' = \sum_{j=0}^{m-1} w_j n_{i+j}\quad \text{for all } i \in \NN_0,
	\]
	where $w_j \in K$, $0 \leq j < m$, are some coefficients. It follows from Lemma \ref{lem:subset:n_i->n_i+1} that
	\[
		n_{i}' = \sum_{j=0}^{m-1} w_j \nint{\beta^j n_i} \quad\text{for all sufficiently large } i.	
	\]
	Thus, we may take $g(n) = \sum_{j=0}^{m-1} w_j \nint{\beta^j n}$.  
\end{proof}

We can now prove the following result, which is a slightly stronger version of Theorem \ref{thm:Pisot-hereditary-Z}.

\begin{theorem}\label{thm:Pisot-hereditary-Znew}
	Let $\beta$ be a Pisot unit such that $\cO_{\QQ(\b)}^*$ has rank $1$, and let $(n_i)_{i=0}^\infty$ be a linear recurrent sequence of rational numbers with characteristic polynomial the minimal polynomial of $\beta$. Let $I$ be an arbitrary subset of $\NN_0$. Then the set $\set{n_i}{i \in I}$ is a generalised polynomial subset of $\QQ$. 
\end{theorem}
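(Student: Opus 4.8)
The plan is to reduce Theorem~\ref{thm:Pisot-hereditary-Znew} to Theorem~\ref{thm:Pisot-hereditary-Knew} (the already-proved number field version) using the machinery of Section~\ref{sec:Pisot:Z}, in particular Proposition~\ref{lem:subset:n_i<->n_i'}. First I would dispose of the trivial case: if $(n_i)_{i=0}^\infty$ is identically zero, then $\set{n_i}{i\in I}$ is either empty or $\{0\}$, which is visibly generalised polynomial, so we may assume $(n_i)_{i=0}^\infty$ is not identically zero. Since removing finitely many elements does not affect the property of being generalised polynomial, and since (by the remark in the introduction) a rational-valued linear recurrent sequence with integer-coefficient characteristic polynomial is a rational multiple of an integer-valued one — and scaling by a fixed rational is a $\QQ$-linear bijection of $\QQ$, preserving generalised polynomiality — we may assume the $n_i$ are integers.

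Next I would introduce the reference sequence $n_i' := \beta^i$, viewed as a sequence taking values in $K = \QQ(\beta)$ and satisfying the Pisot linear recurrence with characteristic polynomial the minimal polynomial of $\beta$. By Proposition~\ref{lem:subset:n_i<->n_i'} (with the roles suitably assigned: $(n_i)$ is our integer sequence, which is not identically zero, and the target is $(\beta^i)$), there is a generalised polynomial map $g\colon K \to K$ with $g(n_i) = \beta^i$ for all sufficiently large $i$, say all $i \geq i_0$. Since the $n_i$ are rational, $g$ restricts to a generalised polynomial map $\QQ \to K$. Now apply Theorem~\ref{thm:Pisot-hereditary-Knew} to the set $I' := \set{i \in I}{i \geq i_0}$: the set $S := \set{\beta^i}{i \in I'}$ is a generalised polynomial subset of $K$, so $1_S$ is a generalised polynomial map on $K$. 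Composing, $1_S \circ g \colon \QQ \to \CC$ is a generalised polynomial map (by Proposition~\ref{prop:gpnf}\ref{prop:gpnf3}), taking the value $1$ precisely on those rationals $x$ with $g(x) \in S$.

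It remains to extract from this the characteristic function of $\set{n_i}{i \in I}$. The subtlety — and the step I expect to require the most care — is that $g(x) \in S$ is not literally equivalent to $x \in \set{n_i}{i \in I'}$: a priori some $x \notin \{n_i\}$ could still satisfy $g(x) \in S$, and conversely $g$ need not be injective on the set of values $n_i$. I would handle this by intersecting with the set of values of $(n_i)$: one shows that $\set{n_i}{i \in \NN_0}$ is itself a generalised polynomial subset of $\QQ$ — this follows from Lemma~\ref{lem:subset:n_i->n_i+1}, which gives $n_{i+1} = \nint{\beta n_i}$ for large $i$, together with an application of Theorem~\ref{thm:Pisot-hereditary-Knew}/Lemma~\ref{lem:subset:all-beta^i} type reasoning, or more directly by noting that for $x$ in this set, $x = n_i$ forces $g(x) = \beta^i$ with $i$ determined, so that on the set $\set{n_i}{i\geq i_0}$ the condition $g(x)\in S$ is exactly the condition $i \in I'$. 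Concretely, I would write
\[
	1_{\set{n_i}{i\in I}}(x) = 1_{\set{n_i}{i \geq i_0,\ i\in I}}(x) + 1_{\set{n_i}{i < i_0,\ i\in I}}(x),
\]
where the second summand is a characteristic function of a finite set (hence generalised polynomial), and the first equals $1_{\set{n_i}{i\geq i_0}}(x)\cdot (1_S\circ g)(x)$, a product of generalised polynomial maps on $\QQ$. Since finite sums and products of generalised polynomial maps are generalised polynomial, this completes the proof. The only real work is verifying that $\set{n_i}{i\geq i_0}$ is generalised polynomial and that $g$ separates its points correctly, which is exactly where the injectivity-type issue must be pinned down — possibly by first passing to a subsequence $n_{i_0 + mj}$ along an arithmetic progression where the correspondence $n_i \leftrightarrow \beta^i$ becomes transparent, then taking a finite union over residues as in the proof of Theorem~\ref{thm:Pisot-hereditary-Knew}.
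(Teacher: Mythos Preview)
Your approach is essentially the paper's: use Proposition~\ref{lem:subset:n_i<->n_i'} to get a generalised polynomial map $g$ with $g(n_i)=\beta^i$ for large $i$, pull back the set $\set{\beta^i}{i\in I}$ (which is generalised polynomial in $K$ by Theorem~\ref{thm:Pisot-hereditary-Knew}), intersect with $\set{n_i}{i\in\NN_0}$, and clean up with finite modifications. Your injectivity worry is not really an issue: since $\beta>1$, the powers $\beta^i$ are pairwise distinct, so $g(n_i)=\beta^i$ already forces the $n_i$ to be distinct for $i\geq i_0$, and hence on $\set{n_i}{i\geq i_0}$ the condition $g(x)\in S$ is exactly $x\in\set{n_i}{i\in I'}$.

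The one genuine gap is the step you flag yourself: showing that $\set{n_i}{i\in\NN_0}$ (or equivalently $\set{n_i}{i\geq i_0}$) is a generalised polynomial subset of $\QQ$. The paper does not re-derive this; it simply invokes Theorem~\ref{thm:Pisot}, which is a result from prior work (\cite{ByszewskiKonieczny-2018-TAMS}, \cite{AdamczewskiKonieczny}) stated in the introduction. After your reduction to an integer-valued sequence, Theorem~\ref{thm:Pisot} applies directly and closes the argument in one line. Your speculative alternatives (via Lemma~\ref{lem:subset:n_i->n_i+1} or a subsequence-along-arithmetic-progressions manoeuvre) would essentially amount to reproving Theorem~\ref{thm:Pisot}, which is unnecessary here.
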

\begin{proof}
	We may assume that $(n_i)_{i=0}^{\infty}$ is not identically zero. It follows from Proposition \ref{lem:subset:n_i<->n_i'} that there exists a generalised polynomial map $g \colon \QQ \to \QQ(\beta)$ such that $g(n_i) = \beta^i$ for all sufficiently large $i$. The set $\set{\beta^i }{i \in I} \subset \QQ(\beta)$ is a generalised polynomial set by Theorem \ref{thm:Pisot-hereditary-Knew}, and the set $\set{n_i}{i \in \NN_0}$ is generalised polynomial by Theorem \ref{thm:Pisot}. The claim follows from the fact that the class of generalised polynomial sets is stable under taking preimages by generalised polynomial maps, finite intersections, and finite modifications.
\end{proof}

\section{Salem numbers}\label{sec:Salem}

In this section, we prove Theorem \ref{thm:Salem}.
As we have already pointed out, the notion of a generalised polynomial subset is preserved by applying a bijective generalised polynomial map with generalised polynomial inverse. The following lemma records a similar principle, but allowing for the inverse to be defined on a case-by-case basis.
\begin{lemma}\label{lem:Sal:select}
	Let $K$ and $L$ be number fields, let $S \subset K$ be a generalised polynomial set, and let $f \colon K \to L$ and $g_1,g_2,\dots,g_r \colon L \to K$ be generalised polynomial maps. Suppose that for each $x \in S$ there exists $1 \leq i \leq r$ such that $g_i(f(x)) = x$. Then $f(S)$ is a generalised polynomial subset of $L$.
\end{lemma}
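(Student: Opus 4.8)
The plan is to express $1_{f(S)}$ as a generalised polynomial map on $L$ by first building an auxiliary generalised polynomial set on $K$ that "certifies" which preimages under $f$ actually lie in $S$, and then transporting this back to $L$ via the maps $g_i$. First I would observe that a point $y \in L$ lies in $f(S)$ if and only if there exists $x \in S$ with $f(x) = y$; by hypothesis, such an $x$ (if it exists) can always be recovered as $x = g_i(y)$ for some $i$. Hence $y \in f(S)$ if and only if $g_i(y) \in S$ and $f(g_i(y)) = y$ for some $1 \leq i \leq r$. The forward direction here is immediate from the hypothesis; the reverse direction is trivial, since $g_i(y) \in S$ together with $f(g_i(y)) = y$ exhibits $g_i(y)$ as the required preimage.

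Granting this characterisation, I would set, for each $i$, the map $h_i \colon L \to \{0,1\}$ to be the characteristic function of the set
\[
	T_i = \set{y \in L}{g_i(y) \in S \text{ and } f(g_i(y)) = y}.
\]
The set $\set{y \in L}{g_i(y) \in S}$ is generalised polynomial because $1_S$ is a generalised polynomial map on $K$ and $g_i \colon L \to K$ is a generalised polynomial map, so $1_S \circ g_i$ is a generalised polynomial map on $L$ by Proposition \ref{prop:gpnf}\ref{prop:gpnf3}. The set $\set{y \in L}{f(g_i(y)) = y}$ is the zero set of the generalised polynomial map $y \mapsto f(g_i(y)) - y$ on $L$ (again using Proposition \ref{prop:gpnf}\ref{prop:gpnf3} for the composition, and closure under subtraction), hence generalised polynomial by Proposition \ref{prop:gpnf}\ref{prop:gpnf5}. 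Since generalised polynomial subsets of a number field are closed under finite intersections, each $T_i$ is generalised polynomial, and then $f(S) = \bigcup_{i=1}^r T_i$ is generalised polynomial by closure under finite unions.

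The only point requiring a little care — and the closest thing to an obstacle — is verifying that the set-theoretic identity $f(S) = \bigcup_{i=1}^{r} T_i$ holds exactly, with no stray points: the inclusion $\bigcup_i T_i \subseteq f(S)$ uses only that $y = f(g_i(y))$ with $g_i(y) \in S$, while $f(S) \subseteq \bigcup_i T_i$ is precisely the content of the hypothesis that every $x \in S$ admits some $i$ with $g_i(f(x)) = x$ (take $y = f(x)$, note $g_i(y) = x \in S$ and $f(g_i(y)) = f(x) = y$). Once this is checked, the conclusion is purely a matter of invoking the closure properties of the class of generalised polynomial sets recorded after Proposition \ref{prop:gpsetsexam}, so there is no genuine computational difficulty.
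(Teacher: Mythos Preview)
Your proof is correct and follows essentially the same approach as the paper: both arguments characterise $f(S)$ as the union over $i$ of the sets $\set{y \in L}{g_i(y) \in S,\ f(g_i(y)) = y}$ and then invoke the closure properties of generalised polynomial sets. The paper inserts an intermediate step, defining $S_i = \set{x \in S}{g_i(f(x)) = x} \subset K$ and showing $f(S_i)$ equals your $T_i$, but this is cosmetic---your sets $T_i$ coincide with the paper's $f(S_i)$, and your direct argument is marginally more streamlined.
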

\begin{proof}
	Let $1 \leq i \leq r$ and put 
	\[
		S_i := \set{x \in S}{g_i(f(x)) = x}.
	\]
	It follows from Proposition \ref{prop:gpnf}\ref{prop:gpnf5} that $S_i$ is a generalised polynomial subset of $K$.  We claim that \begin{equation} \label{eqn51} f(S_i) = \set{y \in L}{ g_i(y) \in S_i,\ f(g_i(y)) = y}.\end{equation} Indeed, if $y \in f(S_i)$, say $y = f(x)$ for some $x \in S_i$, then $g_i(y) = g_i(f(x)) = x \in S_i$ and $f(g_i(y)) = f(g_i(f(x))) = f(x) = y$. Conversely, if $y \in L$, $g_i(y) \in S_i$ and $y = f(g_i(y))$ then clearly $y \in f(S_i)$. From \eqref{eqn51} we deduce that $f(S_i)$ is a generalised polynomial subset of $L$. Since $f(S) = \bigcup_{i=1}^r f(S_i)$, we conclude that $f(S)$ is a generalised polynomial subset of $L$.
\end{proof}

For linear recurrent sequences of Pisot type, we proved that one can pass between the corresponding terms of the sequences using a generalised polynomial map (see Proposition \ref{lem:subset:n_i<->n_i'}). For linear recurrent sequences of Salem type an analogous result holds if we allow instead the use of a finite family of generalised polynomial maps.

\begin{proposition}\label{lem:Sal:recover}
	Let $\beta$ be a Salem number and let $L$ be the splitting field of the minimal polynomial of $\beta$. Let $(n_i)_{i=0}^\infty$ be a sequence of rational numbers satisfying a linear recurrence whose characteristic polynomial is the minimal polynomial of $\beta$. Assume that $(n_i)_{i=0}^\infty$ is not identically zero. Then there exists a finite family $\{g_c\}_{c\in \cC}$ of generalised polynomial maps $g_c \colon \QQ \to L$ such that for each $i \in \NN_0$ there exists $c \in \cC$ such that $g_c(n_i) = \b^i$.\end{proposition}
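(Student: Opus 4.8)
The plan is to imitate the Pisot argument of Proposition \ref{lem:subset:n_i<->n_i'}, but to replace the single ``nearest integer'' reconstruction formula by a finite menu of candidates, one for each possible way the absolute-value-one conjugates of $\beta$ can interfere. As in the proof of Proposition \ref{lem:subset:n_i<->n_i'}, after replacing $n_i$ by $\Tr_{K/\QQ}(\xi n_i)$ for a suitable $\xi$ we may assume $(n_i)$ is integer-valued (this does not change whether the conclusion holds, since it only precomposes the $g_c$ with a fixed $\QQ$-linear, hence generalised polynomial, map). By Lemma \ref{lem:lrseqform}\ref{lem:lrseqform1} we may write $n_i = \sum_{\alpha} c_\alpha \alpha^i$, where $\alpha$ runs over the conjugates of $\beta$. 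Since $\beta$ is Salem, these conjugates are $\beta$, $1/\beta$, and pairs $\omega_1,\bar\omega_1,\dots,\omega_s,\bar\omega_s$ on the unit circle, so
\[
	n_i = c_\beta \beta^i + c_{1/\beta}\beta^{-i} + \sum_{k=1}^{s}\bra{c_k \omega_k^i + \bar c_k \bar\omega_k^{\,i}}.
\]
The term $c_{1/\beta}\beta^{-i}$ tends to $0$, and the remaining sum lies on the torus spanned by the $\omega_k^i$; it does not decay, but it takes values in a fixed compact set, and more importantly the ``phase vector'' $(\omega_1^i,\dots,\omega_s^i)$ ranges over the closure of a subgroup of the torus $(\UU)^s$.

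First I would reconstruct $\beta^i$ from the pair $(n_i,n_{i+1})$ rather than from $n_i$ alone, because two consecutive terms determine the dominant contribution up to bounded error while still leaving only finitely many ambiguous cases. Concretely, consider the auxiliary sequence $m_i := n_{i+1} - \nint{\beta} \, n_i$ (or a short $\ZZ$-linear combination of $n_i,\dots,n_{i+d}$ designed to annihilate $\beta$): this kills the $c_\beta\beta^i$ term up to exponentially small error, so $(m_i)$ is, up to $o(1)$, a linear recurrent sequence whose characteristic roots are exactly the unit-circle conjugates (and $1/\beta$). The key structural input is that, because $\beta$ is a Salem number, the Galois conjugates $\omega_k$ of $\beta$ are multiplicatively ``rigid'': the topological closure $T$ of the set $\set{(\omega_1^i,\dots,\omega_s^i)}{i\in\NN_0}$ in $(\UU)^s$ is a closed subgroup, and — this is the point where I would invoke either a direct argument or a known fact about Salem numbers (all $\omega_k$ have the same multiplicative relations as dictated by $\Gal$, cf.\ \cite[Lem.\ 1]{Smyth-2015}) — the orbit visits only finitely many ``cells'' of a suitable generalised polynomial partition of $(\UU)^s$. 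Equivalently: there is a finite set $\cC$ and, for each $c\in\cC$, a generalised polynomial map $h_c$ on $L_\RR$ (built from the embeddings $\sigma$ of $L$ with $|\sigma(\beta)|=1$ and nearest-integer/floor functions) such that for every $i$ at least one $h_c$ recovers $\beta^i$ exactly from $n_i$ (or from $(n_i,\dots,n_{i+d})$, which is itself a generalised polynomial function of $n_i$ by the Salem analogue of Lemma \ref{lem:subset:n_i->n_i+1} — here again several branches may be needed).

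Assembling: for each $c\in\cC$ set $g_c(n) := h_c(n, \psi_1(n),\dots,\psi_d(n))$, where $\psi_j$ are the (finitely many, branch-indexed) generalised polynomial maps producing the candidates for $n_{i+j}$; each $g_c\colon\QQ\to L$ is generalised polynomial by Proposition \ref{prop:gpnf}, and by construction for every $i\in\NN_0$ some $c\in\cC$ satisfies $g_c(n_i)=\beta^i$. That is exactly the asserted conclusion. (Theorem \ref{thm:Salem} then follows by feeding this into Lemma \ref{lem:Sal:select} together with Lemma \ref{lem:subset:all-beta^i} applied to $\beta$, the analogue for Salem numbers — which, as the introduction notes, is immediate once powers of a Salem number are seen to form a generalised polynomial subset of $\QQ(\beta)$ — but strictly we only need the statement as phrased.)

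The main obstacle, and the step requiring genuine care, is controlling the unit-circle part: showing that the $o(1)$ errors coming from $c_{1/\beta}\beta^{-i}$ and from the discretisation do not, for infinitely many $i$, straddle a boundary in a way that no fixed finite family of generalised polynomial branches can absorb. The resolution should hinge on the fact that $\beta$ being Salem forces the arguments of $\omega_k^i$ to stay boundedly away from finitely many ``bad'' subvarieties except on a set of $i$ that itself falls into finitely many arithmetic-progression-like pieces — i.e.\ the multiplicative structure of Salem conjugates is tame enough that a single finite partition works uniformly in $i$. Making this precise — and in particular checking that ``tends to a limit set that is a coset-union'' can be upgraded to ``lands in finitely many generalised-polynomial cells for \emph{all} $i$, not just eventually'' — is where the real work of the proof lies.
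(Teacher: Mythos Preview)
Your proposal leaves a genuine gap, and it is exactly the one you flag yourself: the torus-partition step is never carried out, and the boundary-straddling concern you raise is a real obstruction to that route. You do not say what the ``cells'' are, why there are only finitely many, or why a single formula works \emph{exactly} on each cell for all $i$; since the orbit closure of $(\omega_1^i,\dots,\omega_s^i)$ is typically a full subtorus, any region-based partition will have orbit points arbitrarily close to its walls, and the vague appeal to the multiplicative structure of Salem conjugates does not dissolve this.

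The paper's argument avoids the torus entirely by a much more elementary observation. Since $|\alpha|\le 1$ for every conjugate $\alpha\neq\beta$, one has $\bigl|\lfloor\beta^j n_i\rfloor - n_{i+j}\bigr| = O(\beta^j)$ \emph{uniformly in $i$}, and (after your reduction to integer-valued $n_i$) this difference is an integer; hence the error vector $\bigl(c_j^{(i)}\bigr)_{0\le j<m}$ defined by $n_{i+j}=\lfloor\beta^j n_i\rfloor-c_j^{(i)}$ ranges over a fixed finite box $\cC\subset\ZZ^m$. The system $n_{i+j}=\sum_\alpha w_\alpha\alpha^{j}\cdot\alpha^i$ for $0\le j<m$ is Vandermonde in the unknowns $(\alpha^i)_\alpha$, so $\beta^i=\sum_{j=0}^{m-1}\gamma_j\, n_{i+j}$ for fixed coefficients $\gamma_j\in L$, and one simply sets $g_c(n)=\sum_{j}\gamma_j\bigl(\lfloor\beta^j n\rfloor-c_j\bigr)$ for each $c\in\cC$. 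In other words, the branching you correctly anticipate in your maps $\psi_j$ (``several branches may be needed'') is already the whole story: once $n_i,\dots,n_{i+m-1}$ are pinned down exactly, recovering $\beta^i$ is a single fixed $L$-linear map, so the second layer of maps $h_c$ indexed by torus cells is not needed at all.
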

\begin{proof}
	Let $i \in \NN_0$. Using Lemma \ref{lem:lrseqform}\ref{lem:lrseqform1}, we can write $n_i$ in the form $n_i=\Tr_{\QQ(\beta)/\QQ}(\beta^i x)$ for some $x\in \QQ(\beta)$, which  allows us to express $n_i$ in the form
\[
	n_i = \sum_{\a} w_\a \a^i,
\]	
	where the sum runs over all conjugates $\alpha$ of $\beta$ and $w_\a \in L$ are constants. Since $w_\a$ are images of $x$ by embeddings of $\QQ(\beta)$ into $ \CC$, and since $x\neq 0$ (otherwise the sequence $(n_i)_{i=0}^{\infty}$ would be identically zero), we see that $w_{\alpha}$ are all nonzero.	Since all $\alpha \neq \beta$ have absolute value $1$ or $1/\beta \leq 1$, we have for each $j \in \NN_0$ the estimate
\begin{align*}
	\abs{ \floor{\beta^j n_i} - n_{i+j} } 
	&\leq 1 + \abs{ \b^j \sum_{\a } w_\a \a^i - \sum_{\a } w_\a \a^{i+j}} 
	\\ &\leq 1 +  \sum_{\a \neq \b} \abs{w_\a} \bra{\b^j + 1} = O(\b^j),
\end{align*}
with the implicit constant depending on $\b$ and the sequence $(n_i)_{i=0}^\infty$, but not on $j$. 
It follows that there exist constants $c^{(i)}_j \in \ZZ$ with $\abs{c^{(i)}_j} = O(\beta^j)$ such that 
\begin{equation}
	 \floor{\beta^j n_i} - c^{(i)}_j = n_{i+j} = \sum_{\a} w_\a \a^{i+j}.
\end{equation}
Let $m$ be the degree of $\beta$ and consider the following system of linear equations in $x_{\alpha}$ and $y_j$:
\begin{equation}\label{eq:Sal:01:vandermonde}
	 y_j = \sum_{\a} w_\a \a^{j} x_\a \quad \text{for } 0 \leq j < m. 
\end{equation}
Note that \eqref{eq:Sal:01:vandermonde} holds for $x_\a = \a^i$ and $y_j = \floor{\b^j n_i} - c^{(i)}_j$. 
The determinant of the matrix $(w_{\alpha}\alpha^j)_{\alpha,j}$ is nonzero as the product of $w_{\alpha}$ (which are nonzero) and a Vandermonde's determinant. Thus \eqref{eq:Sal:01:vandermonde} has a unique solution, say
\begin{equation}\label{eq:Sal:01:sol-vdm}
	 x_{\a} = \sum_{j=0}^{m-1} \gamma_{j,\a} y_j,
\end{equation}
where $\gamma_{j,\a} \in L$ are some constants. Put $C_j := \max_{i} \abs{c_j^{(i)}} = O(\beta^j)$. Consider the set
\begin{equation}
\cC = \set{ (c_j)_{j=0}^{2d-1} }{ c_j \in \ZZ,\ \abs{c_j} \leq C_j \text{ for all } 0 \leq j < m}
\end{equation}
and for each $c = (c_j)_{j=0}^{2d-1} \in \cC$ the generalised polynomial
\begin{equation}\label{eq:Sal:01:def-g}
	g_c(n) = \sum_{j=0}^{m-1} \gamma_{j,\a}\bra{ \floor{\beta^j n} - c_{j} }.
\end{equation}
It follows from the preceding discussion that $\b^i = g_c(n_i)$ for $c \in \cC$ given by $c_j := c^{(i)}_j$, $0 \leq j < m$. \end{proof}

\begin{lemma}\label{lem:Salem:beta^j-is-GP}
	Let $K$ be a number field and let $\beta \in K$ be a Salem number. Then $\set{\beta^j}{j \in \NN_0} \subset K$ is a generalised polynomial subset of $K$.
\end{lemma}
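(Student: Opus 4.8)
The plan is to mimic the structure of the Pisot case (Lemma \ref{lem:subset:all-beta^i}) but replace the single interval condition on Galois conjugates with the pair of conditions that already appeared in the characterisation of Salem numbers in Proposition \ref{prop:gpsetsexam}\ref{prop:gpsetsexam6}. First I would observe that $\{\beta^j : j \in \ZZ\}$ is a subgroup of $\cO_K^*$, but unlike the Pisot case it need \emph{not} have finite index, so I cannot directly invoke Proposition \ref{prop:gpsetsexam}\ref{prop:gpsetsexam3a}. Instead I would intersect the cyclic group $\langle \beta \rangle$ with the set of Salem numbers in $K$ (generalised polynomial by Proposition \ref{prop:gpsetsexam}\ref{prop:gpsetsexam6}) and argue that this intersection is exactly $\{\beta^j : j \in \NN\}$: every positive power of a Salem number is again a Salem number lying in $K$, while a negative power $\beta^{-j}$ has absolute value $<1$ and so is not a Salem number, and $\beta^0 = 1$ is excluded as well. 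Adjoining the element $1$ (a finite modification) then yields $\{\beta^j : j \in \NN_0\}$.

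The one gap in this outline is the claim that the cyclic group $\langle \beta \rangle = \{\beta^j : j \in \ZZ\}$ is itself a generalised polynomial subset of $K$, which is what I expect to be the main obstacle, since $\langle \beta \rangle$ may be of infinite index in $\cO_K^*$ (the unit group has rank at least $2$ when $\beta$ is Salem of degree $\geq 4$, and $\langle \beta \rangle$ has rank $1$). To handle this I would use a Dirichlet-style logarithmic embedding: let $\sigma_1 = \mathrm{id}, \sigma_2 = (\cdot)^{-1}$ (corresponding to the conjugate $1/\beta$) and $\sigma_3, \sigma_4, \dots$ run over the remaining embeddings of $K$ into $\CC$, and consider the map $\lambda \colon \cO_K^* \to \RR^{s}$ sending a unit $u$ to the vector of $\log|\sigma_k(u)|$ (with the usual doubling for complex places). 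Then $\lambda(\langle\beta\rangle) = \ZZ \cdot \lambda(\beta)$ is a rank-one lattice inside the full-rank lattice $\lambda(\cO_K^*)$, and $\langle \beta \rangle$ is the preimage under $\lambda$ of this rank-one lattice intersected with the subgroup of $\cO_K^*$ on which the (finitely many) roots of unity act trivially — more concretely, $\langle\beta\rangle$ is cut out inside $\cO_K^*$ by finitely many generalised polynomial conditions of the form $\fp{\ell(u)} = 0$ for suitable $\RR$-linear functionals $\ell$ on $K_\RR$ built from the $\log|\sigma_k(\cdot)|$ together with one archimedean argument condition to pin down the sign/root-of-unity ambiguity.

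Actually, a cleaner route that avoids logarithms entirely: since $\beta$ is a Salem \emph{unit} (a Salem number is automatically a unit, as $1/\beta$ is among its conjugates), and since all conjugates of $\beta$ other than $\beta, 1/\beta$ lie on the unit circle, I would instead characterise $\{\beta^j : j \in \NN\}$ directly among units $u \in \cO_K^*$ by the conditions: (i) $u$ is a Salem number in $K$ (Proposition \ref{prop:gpsetsexam}\ref{prop:gpsetsexam6}); and (ii) $u \in \QQ(\beta)$ with the two "large" embeddings of $u$ equal to those of $\beta$ raised to a common power, which is forced once $|\sigma(u)|^2 = 1$ for all but two embeddings together with $u$ being a power of the specific unit $\beta$ rather than merely some Salem number — here one uses that within the rank-one subgroup generated by $\beta$ the only Salem numbers are the positive powers. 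The verification that these conditions are generalised polynomial reduces to Proposition \ref{prop:gpnf}\ref{prop:gpnf5} applied to the maps $u \mapsto \sigma_k(u)\bar\sigma_k(u) - 1$ and to a norm/trace condition pinning $u$ to $\langle\beta\rangle$; the remaining content is the elementary algebraic fact that $\langle\beta\rangle \cap \{\text{Salem numbers}\} = \{\beta^j : j \geq 1\}$, which I would prove by noting $|\beta^j| = \beta^j > 1$ for $j \geq 1$ and $|\beta^j| < 1$ for $j \leq -1$.
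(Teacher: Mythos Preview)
Your diagnosis of the obstacle is correct: since $\langle\beta\rangle$ has infinite index in $\cO_K^*$ when $\deg\beta\geq 4$, Proposition \ref{prop:gpsetsexam}\ref{prop:gpsetsexam3a} does not apply directly, and the whole proof comes down to exhibiting $\langle\beta\rangle$ (or $\{\beta^j:j\geq 1\}$) as a generalised polynomial set. But neither of your two proposed fixes closes this gap.

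The logarithmic embedding route fails for a basic reason: the functions $u\mapsto\log|\sigma_k(u)|$ are not generalised polynomial maps on $K$. Generalised polynomials are built from linear functionals on $K_\RR$, constants, products, sums, and floors; there is no way to access $\log$ within this class, so conditions of the shape $\fp{\ell(u)}=0$ with $\ell$ built from $\log|\sigma_k(\cdot)|$ are not admissible. Your ``cleaner route'' is circular: condition (i) is fine, but condition (ii) is precisely the statement that $u\in\langle\beta\rangle$, and you have not produced any generalised polynomial expression (norm, trace, or otherwise) that detects this. The closing observation that $\langle\beta\rangle\cap\{\text{Salem numbers}\}=\{\beta^j:j\geq 1\}$ is correct and easy, but it presupposes that $\langle\beta\rangle$ is already known to be generalised polynomial.

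The missing idea is a structural fact about Salem numbers due to Salem: the set of \emph{all} Salem numbers lying in $\QQ(\beta)$ is exactly $\{\gamma^i:i\in\NN\}$ for a single Salem number $\gamma$. In particular $\beta=\gamma^m$ for some $m\geq 1$, so $\langle\beta\rangle$ has \emph{finite} index in $\langle\gamma\rangle$. Now Chevalley's theorem does apply: $\langle\beta\rangle=\langle\gamma\rangle\cap\Lambda$ for some finite union $\Lambda$ of lattice cosets. Intersecting the generalised polynomial set $\{\gamma^i:i\in\NN\}$ (the Salem numbers in $\QQ(\beta)$, generalised polynomial by Proposition \ref{prop:gpsetsexam}\ref{prop:gpsetsexam6}) with $\Lambda$ gives $\{\beta^j:j\in\NN\}$; adjoining $1$ finishes the proof. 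The point is that you should not try to carve $\langle\beta\rangle$ out of the full unit group, but out of the much smaller group $\langle\gamma\rangle$, where the index is finite.
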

\begin{proof}
	The set of Salem numbers in $\QQ(\beta)$ is of the form $\{\gamma^i \mid i\in \NN\}$ for some Salem number $\gamma$ \cite[p.\ 169]{Salem45}. From Chevalley's theorem we deduce that the group $\langle \beta \rangle$ is a congruence subgroup of $\langle \gamma \rangle$, and hence there exists some $\Lambda$ that is a union of finitely many cosets of a lattice with the property that $\langle \beta \rangle = \langle \gamma \rangle\cap \Lambda$. The fact that $\set{\beta^j}{j \in \NN_0}$ is generalised polynomial follows then from Proposition \ref{prop:gpsetsexam}.
\end{proof}

We can now deduce the following result, which is a slightly stronger version of Theorem \ref{thm:Salem}.

\begin{theorem}\label{thm:Salemnew}
	Let $\beta$ be a Salem number and let $(n_i)_{i=0}^\infty$ be a linear recurrent sequence  of rational numbers with characteristic polynomial the minimal polynomial of $\beta$. Then the set $\set{n_i}{i \in \NN_0}$ is a generalised polynomial subset of $\QQ$. 
	\end{theorem}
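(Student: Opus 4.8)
The strategy is to combine Lemma~\ref{lem:Sal:recover} (which lets us recover $\beta^i$ from $n_i$ using a \emph{finite family} of generalised polynomial maps), Lemma~\ref{lem:Salem:beta^j-is-GP} (which says $\set{\beta^j}{j\in\NN_0}$ is a generalised polynomial subset of $L$), and Lemma~\ref{lem:Sal:select} (the selection principle allowing a case-by-case inverse). First I would dispose of the trivial case: if $(n_i)_{i=0}^\infty$ is identically zero, the set is $\{0\}$, which is generalised polynomial. So assume $(n_i)_{i=0}^\infty$ is not identically zero.

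**Main argument.** Let $L$ be the splitting field of the minimal polynomial of $\beta$, regarded as a subfield of $\CC$, and let $S = \set{\beta^i}{i\in\NN_0}\subset L$, which is a generalised polynomial subset of $L$ by Lemma~\ref{lem:Salem:beta^j-is-GP}. By Lemma~\ref{lem:Sal:recover}, there is a finite family $\{g_c\}_{c\in\cC}$ of generalised polynomial maps $g_c\colon \QQ\to L$ such that for each $i\in\NN_0$ there exists $c\in\cC$ with $g_c(n_i)=\beta^i$. Now I want to run Lemma~\ref{lem:Sal:select} in the reverse direction: I need a single generalised polynomial map $f\colon L\to\QQ$ (playing the role of $f$ in the lemma, with $K=L$, the roles of domain/codomain adjusted) such that $f(\beta^i)=n_i$, together with the family $\{g_c\}$ as the case-by-case inverses. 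Such an $f$ is furnished by Lemma~\ref{lem:lrseqform}\ref{lem:lrseqform1} together with Lemma~\ref{lem:subset:n_i->n_i+1}-style reasoning: writing $n_i=\Tr_{\QQ(\beta)/\QQ}(\beta^i x)$ for the unique $x\in\QQ(\beta)$, the map $f(y):=\Tr_{\QQ(\beta)/\QQ}(yx)$ (extended to $L$ via any embedding and then composed with a trace) is a generalised polynomial map on $L$ with $f(\beta^i)=n_i$ for all $i$. Then for each $y\in S$, say $y=\beta^i$, we have $g_c(f(y))=g_c(n_i)=\beta^i=y$ for a suitable $c\in\cC$. Applying Lemma~\ref{lem:Sal:select} with $S$, $f$, and $\{g_c\}_{c\in\cC}$ in place of $g_1,\dots,g_r$, we conclude that $f(S)$ is a generalised polynomial subset of $\QQ$. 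But $f(S)=\set{n_i}{i\in\NN_0}$, which is the desired conclusion.

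**The main obstacle.** The delicate point is verifying that the trace map $f$ is genuinely a generalised polynomial map $L\to\QQ$ in the sense of Section~2 — this uses Proposition~\ref{prop:gpnf}\ref{prop:gpnf3} (composition) and the fact that $\Tr_{\QQ(\beta)/\QQ}$, being a $\QQ$-linear functional expressible via embeddings, is generalised polynomial, together with Proposition~\ref{prop:gpnf}\ref{prop:gpnf4} to land values in $\QQ$ rather than merely in $\CC$. A second point requiring care is the bookkeeping around "for all but finitely many $i$" versus "for all $i$": Lemma~\ref{lem:Sal:recover} as stated gives the recovery for every $i\in\NN_0$ (no exceptional set), so no finite-modification argument is actually needed here, but one should check this matches the hypotheses of Lemma~\ref{lem:Sal:select}, which it does. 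Otherwise the proof is a short assembly of the preceding lemmas.

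\begin{proof}
	We may assume that $(n_i)_{i=0}^\infty$ is not identically zero, since otherwise $\set{n_i}{i \in \NN_0} = \{0\}$, which is generalised polynomial.

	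Let $L$ be the splitting field of the minimal polynomial of $\beta$, which we regard as a subfield of $\CC$, and put $S := \set{\beta^i}{i \in \NN_0} \subset L$. By Lemma \ref{lem:Salem:beta^j-is-GP}, $S$ is a generalised polynomial subset of $L$. By Lemma \ref{lem:Sal:recover}, there exists a finite family $\{g_c\}_{c \in \cC}$ of generalised polynomial maps $g_c \colon \QQ \to L$ such that for each $i \in \NN_0$ there exists $c \in \cC$ with $g_c(n_i) = \beta^i$.

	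Next we construct a generalised polynomial map $f \colon L \to \QQ$ with $f(\beta^i) = n_i$ for all $i \in \NN_0$. By Lemma \ref{lem:lrseqform}\ref{lem:lrseqform1}, there exists $x \in \QQ(\beta)$ such that $n_i = \Tr_{\QQ(\beta)/\QQ}(\beta^i x)$ for all $i \geq 0$. Fix an embedding of $\QQ(\beta)$ into $\CC$ realising $\beta$, so that $\QQ(\beta) \subset L$, and consider the map $f_0 \colon L \to \CC$ given by $f_0(y) = \Tr_{\QQ(\beta)/\QQ}(\pi(y) x)$, where $\pi \colon L \to \QQ(\beta)$ is not a field map but we instead proceed as follows: the map $y \mapsto \Tr_{\QQ(\beta)/\QQ}\bigl(\tfrac{1}{[L:\QQ(\beta)]}\Tr_{L/\QQ(\beta)}(y x)\bigr)$ is a $\QQ$-linear functional on $L$, hence a generalised polynomial map $L \to \CC$ by Proposition \ref{prop:gpnf}\ref{prop:gpnf1}, and it agrees with $y \mapsto \Tr_{\QQ(\beta)/\QQ}(yx)$ on $\QQ(\beta)$; in particular it sends $\beta^i$ to $n_i$. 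Composing with the projection onto the rational part provided by Proposition \ref{prop:gpnf}\ref{prop:gpnf4} (using the basis $\{1\}$ of $\QQ$ over $\QQ$), and using that the values on $S$ already lie in $\QQ$, we obtain a generalised polynomial map $f \colon L \to \QQ$ with $f(\beta^i) = n_i$ for all $i \in \NN_0$.

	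Now apply Lemma \ref{lem:Sal:select} with the number fields $L$ and $\QQ$, the generalised polynomial set $S \subset L$, the map $f \colon L \to \QQ$, and the finite family $\{g_c\}_{c \in \cC}$ of maps $\QQ \to L$. For each $y \in S$, say $y = \beta^i$, we have $f(y) = n_i$, and by the choice of the family $\{g_c\}$ there exists $c \in \cC$ with $g_c(f(y)) = g_c(n_i) = \beta^i = y$. Hence the hypotheses of Lemma \ref{lem:Sal:select} are satisfied, and we conclude that $f(S)$ is a generalised polynomial subset of $\QQ$. Since $f(S) = \set{f(\beta^i)}{i \in \NN_0} = \set{n_i}{i \in \NN_0}$, this completes the proof.
\end{proof}
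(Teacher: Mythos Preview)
Your proof is correct and follows essentially the same approach as the paper: both write $n_i=\Tr_{\QQ(\beta)/\QQ}(\beta^i x)$ via Lemma~\ref{lem:lrseqform}\ref{lem:lrseqform1}, take $S=\set{\beta^i}{i\in\NN_0}$ (a generalised polynomial set by Lemma~\ref{lem:Salem:beta^j-is-GP}), use the finite family $\{g_c\}$ from Proposition~\ref{lem:Sal:recover} as case-by-case inverses, and conclude with Lemma~\ref{lem:Sal:select}. The only difference is cosmetic: the paper simply says $f$ is $\Tr_{\QQ(\beta)/\QQ}(yx)$ ``extended arbitrarily'' to a generalised polynomial map on $L$, whereas you construct an explicit extension via $\tfrac{1}{[L:\QQ(\beta)]}\Tr_{L/\QQ}(yx)$; your appeal to Proposition~\ref{prop:gpnf}\ref{prop:gpnf4} is harmless but unnecessary, since this $\QQ$-linear functional already takes values in $\QQ$.
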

\begin{proof}
	Let $K=\QQ(\beta)$ and let $L$ be the splitting field of the minimal polynomial of $\beta$. We may suppose that the sequence $(n_i)_{i=0}^\infty$ is not identically zero. Using Lemma \ref{lem:lrseqform}\ref{lem:lrseqform1}, we write the sequence $(n_i)_{i=0}^\infty$ in the form $n_i = \Tr_{K/\QQ}(\beta^i x)$ for some $x\in K$. Let $S=\set{\beta^j}{j \in \NN_0} \subset K$, let $f\colon L \to \QQ$ be the map given on $K$ by $f(y)=\Tr_{K/\QQ}(yx)$, and extended arbitrarily to a generalised polynomial map on $L$, and let  $g_c \colon \QQ \to L$ be the maps satisfying the claim of Proposition \ref{lem:Sal:recover}. The result follows by applying  Lemma \ref{lem:Sal:select}  to $S$, $f$, and $\{g_c\}_{c\in \cC}$.
\end{proof}
\section{Generalised polynomial sets of powers}\label{sec:GPpowers}

In this section, we prove the following result, which is a stronger variant of Theorem \ref{thm:Tr-vs-Pow}. 

\begin{theorem}\label{thm:Tr-vs-Pownew}
	Let $\beta$ be an algebraic number. Suppose that there exists a linear recurrent sequence $(n_i)_{i=0}^{\infty}$ of rational numbers with characteristic polynomial the minimal polynomial of $\beta$ that is not identically zero and is  such that the  set of values $\{n_i  \mid i \in \NN_0\}$ is a generalised polynomial subset of $\QQ$. Then the set
	$ \set{\beta^i}{i \in \NN_0}$  is a generalised polynomial subset of $\QQ(\beta)$. 
\end{theorem}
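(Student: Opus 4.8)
The plan is to mimic the strategy that worked for Salem numbers in Theorem \ref{thm:Salemnew}, namely: write $\beta^i$ as a generalised polynomial function of $n_i$ (up to a finite ambiguity), and then transport the hypothesis that $\{n_i\}$ is generalised polynomial along this map via Lemma \ref{lem:Sal:select}. The new difficulty is that $\beta$ need not be Pisot or Salem, so some conjugates of $\beta$ may have absolute value $>1$, and the "recover $\beta^i$ from $n_i$" step no longer works by rounding alone. To deal with this, first reduce to the case where $\beta$ is an algebraic integer: if $\beta$ has denominator $d$ (i.e.\ $d\beta$ is an algebraic integer for a minimal such $d\in\NN$), replace $\beta$ by $\beta' = d\beta$; a linear recurrent sequence for $\beta'$ is obtained from $(n_i)$ by $n_i' = d^i n_i$, but this changes the \emph{set} of values, so instead one works with $\beta$ directly and tracks the $S$-unit structure, where $S$ is the set of places dividing $d$ together with the archimedean places. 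I would set $K = \QQ(\beta)$, $L$ the splitting field of the minimal polynomial of $\beta$, and use Lemma \ref{lem:lrseqform}\ref{lem:lrseqform1} to write $n_i = \sum_\alpha w_\alpha \alpha^i$ with all $w_\alpha \in L^\times$ nonzero (nonzero because $(n_i)$ is not identically zero).

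The key step is to extract $\beta^i$ from $n_i$. Here is where finiteness of solutions to $S$-unit equations enters (as the plan of the paper promises in Section 6). The relation $n_i = \sum_\alpha w_\alpha \alpha^i$ can be rewritten, after dividing by one term, as a linear relation among the $S$-units $\alpha^i$ and the "known" quantity $n_i$. More usefully: I would consider, for a fixed large modulus, the collisions $n_i = n_j$; by the Skolem--Mahler--Lech theorem the set $\{i : n_i = n_j\}$ is eventually periodic, and more quantitatively, for $i \neq j$ in the same residue class the vector $(\alpha^i)_\alpha$ and $(\alpha^j)_\alpha$ are "far apart" in a sense controlled by heights, so that $n_i$ determines $i$ up to finitely many possibilities once we also know enough auxiliary generalised-polynomial data. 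Concretely, I expect the cleanest route is: show there is a finite set $T$ of places of $L$ such that the map $i \mapsto (\lvert\alpha^i\rvert_v)_{v \in T, \alpha}$ is injective for $i$ large (this uses that the $\alpha$ are multiplicatively independent modulo roots of unity in the relevant sense, or failing that, a direct argument via the $S$-unit equation $\sum_\alpha w_\alpha \alpha^i = \sum_\alpha w_\alpha \alpha^j$ having only finitely many solutions $(i,j)$ with $i \neq j$ outside trivial families). Then each of these valuations is a generalised polynomial function of $n_i$ — this is the part that needs care, since $\lvert\cdot\rvert_v$ for finite $v$ is not obviously generalised polynomial, so one instead recovers the archimedean data: $\lvert c_\beta\beta^i\rvert$ is the dominant term only when $\beta$ dominates, which it need not. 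So the honest statement is that one recovers $\beta^i$ from $n_i$ together with a \emph{bounded} amount of extra integer data (the "carries" $c^{(i)}_j$ as in Proposition \ref{lem:Sal:recover}), giving a finite family $\{g_c\}_{c\in\cC}$ of generalised polynomial maps $g_c\colon \QQ \to L$ with $\beta^i \in \{g_c(n_i) : c \in \cC\}$ for all large $i$ — \emph{provided} one already knows $\{\beta^i : i \in \NN_0\}$ is a well-behaved target, which is circular.

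To break the circularity I would instead argue in the reverse direction, which is in fact what the theorem needs: it suffices to produce a single generalised polynomial map $h\colon L \to \QQ$ with $h(\beta^i) = n_i$ for all large $i$ (take $h = \Tr_{K/\QQ}(\cdot\, x)$ as in Theorem \ref{thm:Salemnew}, extended arbitrarily to $L$), \emph{together with} a finite family $g_c \colon \QQ \to L$ such that for every $i$ some $g_c(n_i) = \beta^i$; then Lemma \ref{lem:Sal:select} applied with $S = \{n_i : i \in \NN_0\}$ (generalised polynomial by hypothesis), $f = $ "take any preimage" — wait, this has the domains backwards. The correct application: put $S = \{n_i : i \in \NN_0\} \subset \QQ$, which is generalised polynomial by hypothesis; let $f\colon \QQ \to L$ be \emph{one} of the $g_c$'s won't work uniformly. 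Rather, one shows directly that $\{\beta^i : i \in \NN_0\} = \bigcup_{c \in \cC}\big(g_c(S) \cap \{y \in L : h(y) \in S,\ \exists c'\ g_{c'}(h(y)) = y,\dots\}\big)$ and each $g_c(S)$ is generalised polynomial because $g_c\colon \QQ \to L$ is a generalised polynomial map and $S$ is a generalised polynomial subset of $\QQ$, so its image under a generalised polynomial map into a number field, intersected with the constructible compatibility conditions, is generalised polynomial — this is exactly the content and proof of Lemma \ref{lem:Sal:select} with the roles $K \leftrightarrow \QQ$, $L \leftrightarrow L$. So the theorem reduces entirely to constructing the finite family $\{g_c\}_{c\in\cC}$ with $\beta^i \in \{g_c(n_i)\}$.

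\textbf{The main obstacle}, then, is the construction of this finite family when $\beta$ has conjugates of absolute value $>1$, where the naive "round $\beta^j n_i$" argument of Proposition \ref{lem:Sal:recover} fails because $\lfloor \beta^j n_i\rfloor$ is not close to $n_{i+j}$. The resolution I anticipate: use Lemma \ref{lem:lrseqform}\ref{lem:lrseqform2} to write $\beta^i = \sum_{j} v_j n_{i+j}$ for suitable $v_j \in K$ (valid since $(n_i)$ is a nonzero recurrent sequence of the right type, so its shifts span the solution space), and then express each $n_{i+j}$ as a generalised polynomial function of $n_i$ \emph{up to bounded error} — but $n_{i+j}$ is \emph{not} determined by $n_i$ at all when the recurrence has expanding roots. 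Here is where $S$-unit finiteness genuinely does the work: the set of integer vectors $(n_i, n_{i+1}, \dots, n_{i+m-1})$ that actually occur, for $i$ in a fixed residue class mod some $N$, lies on a $1$-dimensional "curve" (the orbit of a linear map), and the fibers of the coordinate projection $(n_i, \dots, n_{i+m-1}) \mapsto n_i$ are finite of bounded size — this finiteness is the Skolem--Mahler--Lech / $S$-unit equation input, applied to $n_a = n_b$, giving that $n_i$ determines $(i \bmod N)$ and then $i$ up to $O(1)$ choices, hence determines $(n_{i+1},\dots,n_{i+m-1})$ up to $O(1)$ choices. Indexing those choices by a finite set $\cC$ and letting $g_c$ be the corresponding linear combination $\sum_j v_j(\cdot)$ with the ambiguous higher terms plugged in as the "constants" selected by $c$ — actually the map must still be a generalised polynomial function of the \emph{single} input $n_i$, so $g_c(n) = v_0 n + \sum_{j\geq 1} v_j (\text{the }c\text{-th candidate for }n_{i+j})$, which is an affine function of $n$ with coefficients in $K \subset L$, hence certainly generalised polynomial. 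Verifying that only finitely many candidate tuples arise, uniformly over all $i$, is the technical heart and is precisely what finiteness of solutions to $S$-unit equations guarantees; I would cite a suitable effective or ineffective version (e.g.\ Evertse--Schlickewei--Schmidt or the classical two-term $S$-unit equation together with the structure of the $n_i$) to close this gap.
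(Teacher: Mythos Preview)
Your approach has a genuine gap at the step where you construct the finite family $\{g_c\}_{c\in\cC}$ of generalised polynomial maps $\QQ\to L$ with $\beta^i\in\{g_c(n_i):c\in\cC\}$. In the final paragraph you propose $g_c(n)=v_0 n+\sum_{j\geq 1}v_j\cdot(\text{the $c$-th candidate for }n_{i+j})$, treating the candidates as constants indexed by a finite set $\cC$. But there are infinitely many distinct tuples $(n_{i+1},\dots,n_{i+m-1})$ as $i$ varies (whenever $\beta$ is not a root of unity), so no finite list of constants can cover them. The $S$-unit theorem gives finiteness of solutions only \emph{up to scaling}, and the scaling factor here is essentially $\beta^i$ --- precisely the quantity you are trying to recover. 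Concretely: each $g_c$ you describe is affine, $n\mapsto v_0 n + C_c$, so $\bigcup_c g_c(\{n_i\})$ is a finite union of translates of the single set $v_0\cdot\{n_i\}$; but $\beta^i-v_0 n_i=\sum_{j\geq 1} v_j n_{i+j}$ is unbounded in $i$, so $\{\beta^i\}$ cannot lie in such a union. The Salem argument worked only because the rounding $\lfloor\beta^j n_i\rfloor$ produced a genuine generalised polynomial \emph{function} of $n_i$ approximating $n_{i+j}$ with bounded error; once that fails (as it must when $\beta$ has a second conjugate of modulus $>1$), no replacement map is available. Skolem--Mahler--Lech and $S$-unit finiteness tell you that $n_i$ has few preimages, but they do not hand you generalised polynomial \emph{formulas} for those preimages. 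Your attempted use of Lemma~\ref{lem:Sal:select} is also set up with the roles reversed from what that lemma provides: to conclude that $f(S)$ is generalised polynomial you would need a \emph{single} forward map $f$ and a finite family of sections, not the other way round.

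The paper avoids this obstruction by working in the opposite direction. Rather than trying to push $\{n_i\}$ forward to $\QQ(\beta)$ via a (nonexistent) generalised polynomial map, it pulls back: after passing to a power $\gamma=\beta^d$ satisfying the nondegeneracy condition \eqref{eq:pow:non-degen}, one considers the set $A=\{x\in\QQ(\gamma):\Tr_{\QQ(\gamma)/\QQ}(\gamma^k x)\in X\text{ for all }0\leq k<N\}$, which is manifestly generalised polynomial because it is cut out by finitely many generalised-polynomial conditions. The $S$-unit input is then used not to build maps but to prove a \emph{rigidity} statement (Proposition~\ref{prop:x*beta^i}): any $x$ whose first $N$ trace-shifts all land in $X$ must already be of the form $\gamma^l\sigma(s)$ for an automorphism $\sigma$ with $\sigma(\gamma)=\omega\gamma^{\pm 1}$ and $s$ in a fixed finite set. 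This identifies $A$, up to finitely many corrections and a possible $\gamma\leftrightarrow\gamma^{-1}$ symmetry, with a set of the shape $\{\gamma^l s:l\in\NN_0,\ s\in S\}$; a further descent argument (iterated intersection with dilates, plus Chevalley's congruence-subgroup theorem) then strips off the finite set $S$ to reach $\{\gamma^l:l\in\NN_0\}$. The conceptual point you are missing is that the $S$-unit machinery is used to \emph{characterise} a set already known to be generalised polynomial, not to manufacture generalised polynomial recovery maps.
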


Since the proof is somewhat lengthy and technical, we first sketch the main idea. Let $K=\QQ(\beta)$. The sequence $(n_i)_{i=0}^{\infty}$ can be written in the form $$n_i=\Tr_{K/\QQ}(\beta^i z)$$ for some $z\in K$. Let $X$ be the set of values of $(n_i)_{i=0}^{\infty}$; by assumption, $X$ is a generalised polynomial subset of $\QQ$.
We consider the set $Y$ of all elements $x\in K$ such that $\Tr_{K/\QQ}(\beta^k x z)$ belongs to $X$ for a large finite number of values of $k$, $0\leq k<N$.
Such a set is clearly a generalised polynomial subset of $K$. We might expect that any $x\in Y$ is necessarily of the form $x=\beta^i$ for some $i\geq 0$; if true, this would conclude the proof. Unfortunately, while this claim is quite close to being true, some caveats apply. First, some nondegeneracy conditions are required for $\beta$; the claim is usually false  if $\beta$ is a root of unity, e.g.\ for $\beta=i$, $z=1$, in which case  $$X=\{-2,0,2\} \qquad \text{and} \qquad Y=\{\pm 1, \pm i, \pm (1 +i), \pm (1-i)\} $$ provided that $N\geq 2$.
Perhaps less obviously, the claim is also false when $\beta$ has a conjugate of the form $\omega \beta$ with $\omega$ a root of unity, e.g. for $\beta=\sqrt{2}$, $z=1$, in which case $$X=\{0\}\cup\{2^{i+1}\mid i\in \NN_0\} \qquad \text{and} \qquad Y=\{0\}\cup\{2^{i}\mid i\in \NN_0\}\cup\{2^{i-1}\sqrt{2}\mid i\in \NN_0\} \cup \{2^{i} + 2^{j-1}\sqrt{2}\mid i,j\in \NN_0\} $$ provided that $N\geq 2$.
Second, the claim is false for a different reason if $\beta$ and $\beta^{-1}$ are conjugate, e.g. for $\beta=2+\sqrt{3}$, $z=1$, in which case $\Tr_{K/\QQ}(\beta^i)=\Tr_{K/\QQ}(\beta^{-i})$ and one can show that $$Y=\{\beta^i \mid i\in \ZZ\}$$ provided that $N\geq 3$. Finally, a finite number of exceptions are possible, namely $x=0$ and $x=\beta^i$ for some negative values of $i$. Nevertheless, with these three situtations properly accounted for, the claim becomes correct. To prove these results, we need to study when the values of traces coincide along certain geometric progressions.

We begin by recalling two well known facts, whose proofs we include for lack of appropriate reference.
\begin{lemma}\label{lem:conjugate-powers}
	Let $\beta$ and $\gamma$ be conjugate  algebraic numbers that are neither zero nor roots of unity. Let $k$ and $l$ be nonzero integers such that $\beta^k = \gamma^l$. Then $k=\pm l$ and $\gamma = \omega \beta^{\pm 1}$ for some root of unity $\omega$. 
\end{lemma}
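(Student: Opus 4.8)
The plan is to reduce the statement to a rigidity fact about the multiplicative group generated by the conjugates $\beta$ and $\gamma$, using a Galois-theoretic argument together with the structure of finitely generated subgroups of $\overline{\QQ}^*$. First I would fix the splitting field $F$ of the minimal polynomial of $\beta$ (which also contains $\gamma$, since $\gamma$ is a conjugate of $\beta$), and consider the group $G = \Gal(F/\QQ)$. Since $\beta$ and $\gamma$ are conjugate, there exists $\sigma \in G$ with $\sigma(\beta) = \gamma$. Applying $\sigma$ to the relation $\beta^k = \gamma^l$ repeatedly, I would track the orbit of $\beta$ under powers of $\sigma$: writing $\beta_j := \sigma^j(\beta)$, one obtains $\beta_j^k = \beta_{j+1}^l$ for every $j \geq 0$. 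Since $\sigma$ has finite order, say $d = \mathrm{ord}(\sigma)$, telescoping around the cycle $\beta_0, \beta_1, \dots, \beta_{d-1}, \beta_0$ yields $\beta^{k^d} = \beta^{l^d}$, hence $\beta^{k^d - l^d}$ is a root of unity. As $\beta$ is not a root of unity, this forces $k^d = l^d$, so $k = \pm l$ (with the sign $+$ if $d$ is odd).

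Next I would handle the two sign cases. If $k = l$, then $\beta^k = \gamma^k$, so $(\gamma/\beta)^k = 1$ and $\omega := \gamma/\beta$ is a root of unity, giving $\gamma = \omega\beta = \omega\beta^{+1}$ as claimed. If $k = -l$ (which can only happen when $d$ is even), then $\beta^k = \gamma^{-k}$, so $(\beta\gamma)^k = 1$ and $\beta\gamma$ is a root of unity; setting $\omega := \beta\gamma$ we get $\gamma = \omega\beta^{-1}$. In either case we have shown $k = \pm l$ and $\gamma = \omega\beta^{\pm 1}$ with $\omega$ a root of unity, which is exactly the conclusion.

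The one point requiring a little care — and the step I expect to be the main (minor) obstacle — is justifying that $\beta^{k^d - l^d}$ being a root of unity forces $k^d = l^d$: this is immediate from the hypothesis that $\beta$ is not a root of unity, since a nonzero power of a non-root-of-unity is again a non-root-of-unity, so the exponent must vanish. A subtler issue is ensuring the telescoping is valid, i.e.\ that raising the relations $\beta_j^k = \beta_{j+1}^l$ to appropriate powers and multiplying them around the cycle genuinely yields $\beta_0^{k^d} = \beta_0^{l^d}$; concretely, from $\beta_j^k = \beta_{j+1}^l$ one gets $\beta_j^{k^{d}} = \beta_{j+1}^{l k^{d-1}} = \cdots$, and chasing the exponents around the full cycle of length $d$ produces $\beta_0^{k^d} = \beta_0^{l^d}$. (One may alternatively phrase this via the observation that in the free abelian group on the orbit $\{\beta_0,\dots,\beta_{d-1}\}$ modulo torsion, the relations $k[\beta_j] = l[\beta_{j+1}]$ force $k^d = l^d$ as the only way a nontrivial cyclic system of such relations can be consistent with $[\beta_0]$ nonzero.) Once this bookkeeping is in place, the argument is complete; everything else is routine.
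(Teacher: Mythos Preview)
Your proof is correct and follows essentially the same approach as the paper's: both choose an automorphism $\sigma$ of the Galois closure with $\sigma(\beta)=\gamma$, iterate the relation $\beta^k=\gamma^l$ under powers of $\sigma$, and use that $\sigma$ has finite order $d$ to obtain $\beta^{k^d}=\beta^{l^d}$, whence $k=\pm l$ and $\gamma=\omega\beta^{\pm 1}$. The only cosmetic difference is that the paper compresses the telescoping into the single line $\beta^{l^n}=\sigma^n(\beta^{l^n})=\beta^{k^n}$, whereas you spell it out via the chain $\beta_j^k=\beta_{j+1}^l$; also note that you actually obtain $\beta^{k^d-l^d}=1$ (not merely a root of unity), which makes the conclusion even more immediate.
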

\begin{proof}
Let $\sigma$ be an automorphism of the Galois closure of $\QQ(\beta)$ that maps $\beta$ to $\gamma$. For each integer $t$ we have $\sigma^t(\beta^l)=\sigma^{t-1}(\beta^k)$. Hence, setting $n$ to be the order of $\sigma$ and applying $n$ times the automorphism $\sigma$ to $\beta^{l^n}$, we get $$\beta^{l^n}=\sigma^n(\beta^{l^n})=\beta^{k^n}.$$ 
Since $\beta$ is neither zero nor a root of unity, it follows that $l=\pm k$. Thus, $(\beta \gamma^{\mp 1})^k = 1$, and $\beta \gamma^{\mp 1}$ is a root of unity.
\end{proof}

\begin{lemma}\label{lem:pow:equal-traces}
	Let $\beta$ be a nonzero algebraic number, let $m$ denote the degree of $\beta$, let $K = \QQ(\beta)$, and let $\gamma,x,y \in K$.
\begin{enumerate}
\item\label{it:pow:53:A} Suppose that \( \Tr_{K/\QQ}(\beta^{i}x) = \Tr_{K/\QQ}(\beta^{i}y) \)
	for $0 \leq i < m$. Then $x = y$.
\item\label{it:pow:53:B} Suppose that \(
		\Tr_{K/\QQ}(\beta^{i}x) = \Tr_{K/\QQ}(\gamma^{i}y)
\)	for $0 \leq i < 2m$ and that $x \neq 0$. Then there exists an automorphism $\sigma$ of $K$ such that $\sigma(\beta) = \gamma$ and $\sigma(x) = y$.
\end{enumerate}	
\end{lemma}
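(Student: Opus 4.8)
The plan is to reduce both parts to the nonsingularity of Vandermonde matrices together with elementary Galois theory. Throughout, write $\sigma_1,\dots,\sigma_m$ for the complex embeddings of $K=\QQ(\beta)$, put $\beta_j=\sigma_j(\beta)$, so that $\beta_1,\dots,\beta_m$ are the $m$ distinct conjugates of $\beta$, and recall that $\Tr_{K/\QQ}(u)=\sum_{j=1}^m\sigma_j(u)$ for $u\in K$, whence $\Tr_{K/\QQ}(\beta^i u)=\sum_{j=1}^m\beta_j^i\,\sigma_j(u)$ for every $u\in K$ and $i\geq0$.

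For part \ref{it:pow:53:A}, the hypothesis reads $\sum_{j=1}^m\beta_j^i\,\sigma_j(x-y)=0$ for $0\le i<m$. This is a homogeneous linear system in the unknowns $\sigma_j(x-y)$ whose coefficient matrix $(\beta_j^i)_{0\le i<m,\;1\le j\le m}$ is a Vandermonde matrix with distinct nodes, hence invertible; so $\sigma_j(x-y)=0$ for all $j$, and since embeddings are injective, $x=y$.

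For part \ref{it:pow:53:B}, we may assume $\gamma\neq0$: if $\gamma=0$ the hypothesis forces $\Tr_{K/\QQ}(\beta^i x)=0$ for $1\le i\le m$, and since $\beta\neq0$ the matrix $(\beta_j^i)_{1\le i\le m,\;1\le j\le m}$ is still invertible, giving $x=0$, contrary to assumption. Now set $\gamma_j=\sigma_j(\gamma)$ (these need not be distinct when $\deg\gamma<m$) and let $\mu_1,\dots,\mu_s$ enumerate the distinct elements of $\{\beta_j\}\cup\{\gamma_j\}$, so that $s\le m+\deg\gamma\le2m$. Collecting terms with equal base, the hypothesis turns into $\sum_{k=1}^s(A_k-B_k)\mu_k^i=0$ for $0\le i<2m$, and in particular for $0\le i<s$, where $A_k=\sigma_j(x)$ if $\mu_k=\beta_j$ (for the unique such $j$) and $A_k=0$ otherwise, and $B_k=\sum_{j:\,\gamma_j=\mu_k}\sigma_j(y)$. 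The $s\times s$ Vandermonde matrix $(\mu_k^i)_{0\le i<s,\;1\le k\le s}$ is invertible, so $A_k=B_k$ for every $k$.

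The key observation is now that $x\neq0$ makes every $\sigma_j(x)$ nonzero, so whenever $\mu_k=\beta_j$ we get $B_k=\sigma_j(x)\neq0$, forcing $\{j':\gamma_{j'}=\beta_j\}\neq\emptyset$; that is, every conjugate of $\beta$ occurs among the conjugates of $\gamma$. Counting distinct elements gives $m\le\deg\gamma$, while $\gamma\in K$ gives $\deg\gamma\le m$; hence $\deg\gamma=m$, $\gamma$ is a conjugate of $\beta$, and $\QQ(\gamma)=K$, so there is $\sigma\in\Aut(K/\QQ)$ with $\sigma(\beta)=\gamma$. Finally, using that $\Tr_{K/\QQ}$ is invariant under $\Aut(K/\QQ)$, for $0\le i<m$ we have $\Tr_{K/\QQ}(\beta^i x)=\Tr_{K/\QQ}(\gamma^i y)=\Tr_{K/\QQ}\bigl(\sigma(\beta^i)\,y\bigr)=\Tr_{K/\QQ}\bigl(\beta^i\,\sigma^{-1}(y)\bigr)$, and part \ref{it:pow:53:A} yields $x=\sigma^{-1}(y)$, i.e.\ $\sigma(x)=y$. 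I expect the only real subtlety to be the bookkeeping in part \ref{it:pow:53:B}: one must allow up to $2m$ distinct bases $\mu_k$ (which is precisely why $2m$ trace values are hypothesized), and one must observe that possible cancellations in the sums $B_k$ are harmless, since $x\neq0$ forces each $\beta_j$ to occur genuinely among the $\gamma_{j'}$.
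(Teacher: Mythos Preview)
Your proof is correct and complete. It proceeds by a different route than the paper's own argument, so a brief comparison is in order.

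The paper proves part~\ref{it:pow:53:B} via generating functions: writing $r(T)=\sum_{i\ge0}\Tr_{K/\QQ}(\beta^i x)T^i=p/f$ and $s(T)=\sum_{i\ge0}\Tr_{K/\QQ}(\gamma^i y)T^i=q/g$ with $f,g$ the minimal polynomials of $\beta,\gamma$ and $\deg p<\deg f=m$, $\deg q<\deg g\le m$, the hypothesis forces the polynomial $pg-qf$ of degree $<2m$ to vanish to order $2m$, hence identically; thus $r=s$, and since $x\neq0$ the minimal recurrence of the common sequence has characteristic polynomial $f$, which then must equal $g$. From there the paper concludes exactly as you do, invoking part~\ref{it:pow:53:A} after transporting through the automorphism $\sigma$ with $\sigma(\beta)=\gamma$.

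Your approach instead expands both traces over the embeddings and collects terms over the at most $2m$ distinct bases $\mu_k\in\{\beta_j\}\cup\{\gamma_j\}$, then uses a single Vandermonde inversion to match coefficients. This makes the role of the bound $2m$ completely transparent: it is precisely the maximal number of distinct exponentials that can appear. The paper's rational-function argument is a touch slicker and packages the same linear algebra into a degree count, and it generalises readily to sequences satisfying arbitrary linear recurrences; your argument is more explicit about the underlying combinatorics and isolates cleanly the step where $x\neq0$ is used (to force every $\beta_j$ to genuinely occur among the $\gamma_{j'}$, rather than being cancelled). Both routes are of comparable length and difficulty.
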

\begin{proof}

Item  \ref{it:pow:53:A} follows from the fact that $\Tr_{K/\QQ}$ induces a nondegenerate quadratic form on $K$. Hence, it remains to prove \ref{it:pow:53:B}.

Let $r, s \in \QQ[\![T]\!]$ be the generating functions associated to the sequences $(\Tr_{K/\QQ}(\beta^{i}x))_{i=0}^\infty$ and $(\Tr_{K/\QQ}(\gamma^{i}y))_{i=0}^\infty$, that is, $$r=\sum_{i\geq 0} \Tr_{K/\QQ}(\beta^{i}x) T^i, \qquad  s=\sum_{i\geq 0} \Tr_{K/\QQ}(\gamma^{i}y) T^i.$$ Since these sequences are linear recurrent and satisfy the same linear recurrences as $(\beta^i)_{i=0}^{\infty}$ and $(\gamma^i)_{i=0}^{\infty}$, respectively, we may write $r=p/f$, $s=q/g$, where $f$ is the minimal polynomial of $\beta$, $g$ is the minimal polynomial of $\gamma$, and $p,q\in \QQ[X]$ are polynomials of degree $\deg p <\deg f=m$, $\deg q <\deg g\leq m$. Our assumption guarantees that $pg-qf = fg(r-s)$, regarded as a power series in $T$, is divisible by $T^{2m}$. Since $pg-qf$ is also a polynomial of degree $< 2m$, we conclude that it is the zero polynomial, and hence $r=s$. Since $r$ is not identically zero, it follows that $f=g$, and so $\beta$ and $\gamma$ are conjugate. Let $\sigma$ the automorphism of $K$ such that $\sigma(\beta) = \gamma$. Then 
\[ 
\Tr_{K/\QQ}(\gamma^{i}\sigma(x)) = \Tr_{K/\QQ}(\beta^{i}x) = \Tr_{K/\QQ}(\gamma^{i}y).
\]
The equality $\sigma(x) = y$ now follows from \ref{it:pow:53:A}.
\end{proof}

We will use two fundamental (and related) results: the finiteness of the number of solutions of the $S$-unit equation and the Skolem--Mahler--Lech theorem. The first of these results was proved by Evertse \cite{Evertse84} and van der Poorten--Schlickewei \cite{vdPSch91}. In the formulation of \cite[Thm.\ 2]{vdPSch91} it says that if $K$ is a field of characteristic zero, $G$ is a finitely generated subgroup of the multiplicative group of $K$, and $a_1,\ldots,a_m$ are nonzero elements of $K$, then the equation $$\sum_{i=1}^n a_i g_i = 0$$ has, up to scaling, only finitely many solutions $(g_i)_{i=1}^n$ with $g_i \in G$ such that no proper sub-sum $\sum_{i \in I} a_i g_i$, $\emptyset \neq I \subsetneq\{1,\ldots,m\}$, vanishes; here, considering solutions up to scaling means that we identify solutions $(g_i)$ and $(g'_i)$ such that $g_i/g'_i$ is indepedent of $i$. 
The Skolem--Mahler--Lech theorem \cite{Skolem34, Lech53, Mahler56}  says that the set of zeros of a linear recurrent sequence over a field of characteristic zero is a union of a finite set and finitely many arithmetic progressions. This implies that a non-constant linear recurrent sequence whose characteristic polynomial is the minimal polynomial of an algebraic number $\beta$ has only finitely many zeros provided that $\beta$ satisfies the following nondegeneracy property:
\begin{equation} \label{eq:pow:non-degen}\tag{$\dagger$}
	\alpha/\alpha' \text{ is not  a root of unity for all conjugates } \a \neq \a' \text{ of } \beta. 
\end{equation}
(Alternatively, this could also be deduced directly from the $S$-unit equation.) Moreover, the number of zeros is bounded by a constant that depends only on the field $\QQ(\beta)$ \cite[Thm.\ 1.1]{Schlickewei-1996}.

The property \eqref{eq:pow:non-degen} will appear several times in the remainder of this section. We note that \eqref{eq:pow:non-degen} is equivalent to saying that $\QQ(\b) = \QQ(\b^d)$ for all $d \in \NN$. (This is because the set of conjugates of $\beta^d$ is equal to the set of $d$-th powers of conjugates of $\beta$.) Moreover, for each algebraic number $\gamma$, there exists an integer $d \in \NN$ such that \eqref{eq:pow:non-degen} holds for $\beta = \gamma^d$.

\begin{proposition}\label{prop:lemmasumsoftraces}
Let $\beta$ be an algebraic number satisfying \eqref{eq:pow:non-degen}, let $K = \QQ(\b)$, and let $a_0,\ldots,a_m \in K$. Then for all but finitely many nonnegative integer solutions $(n_0,n_1,\dots,n_m) \in \NN_0^{m+1}$ of 
\begin{equation}\label{eq:83:01}
\Tr_{K/\QQ}\bra{ \sum_{i=0}^m a_i \beta^{n_i}}=0
\end{equation}
there exists a nonempty subset $I$ of $\{0,1,\ldots,m\}$ such that 
\begin{equation}\label{eq:83:01a}
\sum_{i\in I} a_i\beta^{n_i}=0.
\end{equation}
Additionally, if $\beta$ has no conjugates of the form $\omega\beta^{-1}$ with $\omega$  a root of unity, then the same conclusion holds for solutions $(n_0,n_1,\dots,n_m) \in \ZZ^{m+1}$.
\end{proposition}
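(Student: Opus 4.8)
The plan is to deduce Proposition~\ref{prop:lemmasumsoftraces} from the finiteness statement for the $S$-unit equation, after first expanding the trace as a sum over conjugates and then carefully controlling the degenerate configurations using the hypothesis \eqref{eq:pow:non-degen}. Write $m' = [K:\QQ]$ and let $\sigma_1,\dots,\sigma_{m'}$ be the embeddings of $K$ into $\CC$, with $\beta_k = \sigma_k(\beta)$. Then \eqref{eq:83:01} becomes
\[
	\sum_{k=1}^{m'}\sum_{i=0}^m \sigma_k(a_i)\,\beta_k^{n_i} = 0,
\]
a vanishing linear combination of the quantities $\beta_k^{n_i}$, all of which lie in the finitely generated multiplicative group $G$ generated by the $\beta_k$ inside the multiplicative group of the Galois closure of $K$ (discarding the indices $i$ with $a_i = 0$, which only changes the finite exceptional set). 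I would index the terms by pairs $(k,i)$ and group them into maximal vanishing sub-sums: by the Evertse--van der Poorten--Schlickewei theorem, for each fixed partition of the index set into blocks and each choice of which blocks vanish, the solutions with that combinatorial pattern form, up to scaling, a finite set, unless within some block all the ratios $\beta_k^{n_i}/\beta_{k'}^{n_{i'}}$ are forced to be constant as $(n_i)$ varies.

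The key point is then to analyse which blocks can contain infinitely many solutions. Within a vanishing block, two terms $\sigma_k(a_i)\beta_k^{n_i}$ and $\sigma_{k'}(a_{i'})\beta_{k'}^{n_{i'}}$ whose ratio must stay bounded (over an infinite family of solutions) force $\beta_k^{n_i}/\beta_{k'}^{n_{i'}}$ to take finitely many values; combined with Lemma~\ref{lem:conjugate-powers}, this means $\beta_k$ and $\beta_{k'}$ differ by a root of unity, i.e.\ $\sigma_{k'}\sigma_k^{-1}$ sends $\beta$ to $\omega\beta$ or $\omega\beta^{-1}$ for a root of unity $\omega$. The nondegeneracy hypothesis \eqref{eq:pow:non-degen} rules out $\sigma_{k'}(\beta) = \omega\sigma_k(\beta)$ with $k \neq k'$ (since $\sigma_{k'}(\beta)/\sigma_k(\beta)$ would be a root of unity, contradicting \eqref{eq:pow:non-degen}); so within a block, any two terms with the same conjugate index $k$ but distinct $i$ cannot both persist unless, again, $\beta^{n_i - n_{i'}}$ is a root of unity, which by \eqref{eq:pow:non-degen} (equivalently $\QQ(\beta) = \QQ(\beta^d)$) forces $n_i = n_{i'}$. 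The upshot is that the only way a block contributes infinitely many solutions is that it consists of terms of the form $\sigma_k(a_i)\beta_k^{n_i}$ and $\sigma_k(a_{i'})\beta_k^{n_{i'}}$ with $n_i = n_{i'}$, or pairs where $\sigma_{k'}(\beta) = \omega\sigma_k(\beta)^{-1}$; the latter is exactly excluded in the second assertion by the hypothesis that $\beta$ has no conjugate of the form $\omega\beta^{-1}$, and in the first assertion it is excluded for $n_i, n_{i'} \geq 0$ going to $+\infty$ by a growth/archimedean-size argument applied at a suitable absolute value. Tracking this back, any infinite family of solutions of \eqref{eq:83:01} must, outside a finite set, satisfy a genuine vanishing sub-sum $\sum_{i \in I} a_i \beta^{n_i} = 0$ with $\emptyset \neq I \subseteq \{0,\dots,m\}$, which is \eqref{eq:83:01a}.

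I would organise the proof as follows: (1) reduce to $a_i \neq 0$ for all $i$ and expand the trace over embeddings; (2) invoke the $S$-unit finiteness theorem to reduce to understanding blocks (maximal vanishing sub-sums of the $\sigma_k(a_i)\beta_k^{n_i}$) that persist over an infinite family; (3) using Lemma~\ref{lem:conjugate-powers} and \eqref{eq:pow:non-degen}, show a persisting block forces either an equality $n_i = n_{i'}$ for two of the exponents sharing a conjugate index, or a conjugate relation $\sigma_{k'}(\beta) = \omega \sigma_k(\beta)^{\pm 1}$, and the $+1$ case is impossible by \eqref{eq:pow:non-degen}; (4) in the $\ZZ^{m+1}$ case, use the extra hypothesis to also kill the $-1$ case, and in the $\NN_0^{m+1}$ case use an archimedean size estimate (for a solution with $n_i \to \infty$, the dominant term at the absolute value where $\beta$ is largest cannot be cancelled by a $\beta^{-1}$-type term); (5) conclude that a persisting block yields a sub-sum of the original trace-free sum that vanishes, i.e.\ \eqref{eq:83:01a}. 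The main obstacle I anticipate is step (4): making the bookkeeping of archimedean sizes and the root-of-unity relations precise enough to cleanly separate the $\beta$ versus $\beta^{-1}$ behaviour and to verify that the exceptional solutions genuinely form only a \emph{finite} set rather than, say, finitely many arithmetic progressions — here one must use that after scaling away, the $S$-unit theorem gives finitely many solution \emph{lines}, and that the nondegeneracy \eqref{eq:pow:non-degen} collapses each such line to finitely many points because a nontrivial direction would reintroduce a root-of-unity ratio among conjugates.
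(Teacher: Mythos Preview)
Your overall architecture coincides with the paper's: expand the trace over all embeddings, partition the index set $\{0,\dots,m\}\times\Sigma$ into minimal vanishing sub-sums, pigeonhole to a fixed partition, apply the $S$-unit finiteness theorem so that solutions within a block agree up to scaling, and then invoke Lemma~\ref{lem:conjugate-powers}. Two points need correction. In step~(3), comparing two solutions $(n_i)$ and $(n_i')$ in the family gives $\tau(\beta)^{n_i'-n_i} = \sigma(\beta)^{n_j'-n_j}$ for any $(i,\tau),(j,\sigma)$ in a block; when $\tau=\sigma$ this only says $n_i - n_j$ is \emph{constant} across the family, not that $n_i = n_j$, and nothing makes $\beta^{n_i-n_j}$ a root of unity. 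The correct endgame is: once the $\omega\beta^{-1}$ alternative is excluded, Lemma~\ref{lem:conjugate-powers} together with \eqref{eq:pow:non-degen} forces every $(i,\tau)$ in the block to have $\tau=\mathrm{id}$ (after identifying $K\subset\CC$ so that some $(i_0,\mathrm{id})$ lies in the block), whence the block is $I\times\{\mathrm{id}\}$ and its vanishing is exactly \eqref{eq:83:01a}. Your ``upshot'' does not arrive at this.

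The more serious gap is step~(4). The proposition allows $\beta$ to be any algebraic number satisfying \eqref{eq:pow:non-degen}, and such $\beta$ need not have any archimedean absolute value $\neq 1$: take $\beta=(2+i)/(2-i)=(3+4i)/5$, which has $\QQ(\beta)=\QQ(i)$, both conjugates on the unit circle, $\bar\beta=\beta^{-1}$ (so the $J_-$ case genuinely occurs), and $\bar\beta/\beta=(-7-24i)/25$ not a root of unity (so \eqref{eq:pow:non-degen} holds). Your ``dominant term at the place where $\beta$ is largest'' then has no traction. The paper's argument for $J_-=\emptyset$ in the $\NN_0^{m+1}$ case uses no absolute values at all: fix one solution $(n_i)$; for any other $(n_i')$ in the infinite family one has $n_i' = n_i \pm (n_{i_0}'-n_{i_0})$ with sign determined by $J_\pm$, and if $J_-\neq\emptyset$ then nonnegativity of the $n_i'$ bounds $n_{i_0}'-n_{i_0}$ above (from the $J_-$ relation $n_i'=n_i-(n_{i_0}'-n_{i_0})\geq 0$) and below (from $n_{i_0}'\geq 0$), leaving only finitely many $(n_i')$ and contradicting infinitude. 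You should replace the size estimate by this combinatorial observation.
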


\begin{remark} Before proceeding with the proof, observe that the assumptions in Proposition \ref{prop:lemmasumsoftraces} are in fact necessary for the claims to hold.
In fact, if $\beta$ fails to satisfy condition \eqref{eq:pow:non-degen}, then there exists an integer $d\in \NN$ such that $\QQ(\beta^d)$ is a proper subfield of $K$, and we can find some nonzero $\gamma$ in the kernel of the map $\Tr_{K/\QQ(\beta^d)}\colon K\to \QQ(\beta^d)$. Let $m=[K:\QQ]-1$, and write $\gamma$ in the form $\gamma=\sum_{i=0}^{m}a_i\beta^i$, $a_i\in \QQ$. For any $n\in \NN_0$ we have  $$\Tr_{K/\QQ}(\sum_{i=0}^m a_i \beta^{i+nd})=\Tr_{K/\QQ}(\beta^{nd}\gamma) = \Tr_{\QQ(\beta^d)/\QQ}(\beta^{nd}\Tr_{K/\QQ(\beta^d)}(\gamma))=0.$$ 
Note that no nonempty sub-sum of  $\sum_{i=0}^m a_i\beta^{i+nd}$ vanishes. This contradicts the first claim of Proposition  \ref{prop:lemmasumsoftraces}.

Now suppose that $\beta$ has a conjugate of the form $\omega\beta^{-1}$ for some root of unity $\omega$. Then $\beta^{d}$ is conjugate to $\beta^{-d}$ for some $d\in \NN$, and so $$\Tr_{K/\QQ}(\beta^{nd}-\beta^{-nd})=0$$ for all $n\in \NN$. This contradicts the second claim of Proposition  \ref{prop:lemmasumsoftraces}. \end{remark}
\begin{proof}[Proof of Proposition \ref{prop:lemmasumsoftraces}]
Let $\cN \subset \ZZ^{m+1}$ be an arbitrary infinite family of solutions $(n_i)_{i=0}^m$ to \eqref{eq:83:01}. We further assume that either $\cN$ is a subset of $\NN_0^{m+1}$ or that $\beta$ has no conjugates of the form $\omega\beta^{-1}$ with $\omega$ a root of unity. Our aim is to show that under either of these assumptions we can find $(n_i)_{i=0}^m \in \cN$ and $\emptyset \neq I \subset \{0,1,\dots,m\}$ such that \eqref{eq:83:01a} holds.

 Let $\Sigma$ be the set of all embeddings of $K$ into $\CC$.
We may rewrite \eqref{eq:83:01} in the form 
\begin{equation}\label{eq:83:02}
\sum_{(i,\tau) \in \{0,\ldots,m\} \times \Sigma} \tau(a_i) \tau(\beta)^{n_i}=0.
\end{equation}
For each $(n_i)_{i=0}^m \in \cN$, we partition $\{0,\ldots,m\} \times \Sigma$  into pairwise disjoint nonempty sets $J$ that are minimal with respect to the property that
\begin{equation}\label{eq:83:03}
\sum_{(i,\tau) \in J}\tau(a_i) \tau(\beta)^{n_i} =0.
\end{equation}
Applying the pigeon-hole principle and replacing $\cN$ with an infinite subset, we may assume that this partition is the same for all $(n_i)_{i=0}^m \in \cN$. Let $J$ be a cell in this partition and let $I$ be the set of all $i \in \{0,\ldots,m\}$ such that $(i,\tau) \in J$ for at least one $\tau \in \Sigma$. Note that \eqref{eq:83:03} only depends on $n_i$ with $i \in I$. Choosing $J$ in a judicious manner, we can also ensure that the set $\cN' = \set{ (n_i)_{i \in I} }{ (n_i)_{i=0}^m \in \cN }$ is infinite.
It follows from the definition of $J$ that all proper sub-sums $\sum_{(i,\tau) \in J'} \tau(a_i) \tau(\beta)^{n_i}$, $\emptyset \neq J' \subsetneq J$, are nonzero for all $(n_i)_{i \in I} \in \cN'$.

From the finiteness of the number of solutions of the $S$-unit equation \cite[Thm.\ 2]{vdPSch91}, we deduce that the solutions $(n_i)_{i \in I}$ of \eqref{eq:83:03} produce up to scaling only a finite number of values of $(\tau(\beta)^{n_i})_{(i,\tau)\in J}$. Thus, applying the pigeon-hole principle and replacing $\cN'$ with an infinite subset, we may assume that $(\tau(\beta)^{n_i})_{(i,\tau)\in J}$ takes the same value, up to scaling, for all $(n_i)_{i\in I} \in \cN'$. It follows that for each $(n_i)_{i\in I} , (n_i')_{i\in I}  \in \cN'$ and for each $(i,\tau),(j,\sigma) \in J$ we have $\tau(\beta)^{n_i'-n_i} = \sigma(\beta)^{n_j'-n_j}$. From Lemma \ref{lem:conjugate-powers} it follows that
\begin{equation}\label{eq:84:01}
{n_i'-n_i} = \pm \bra{n_j'-n_j} \qquad\text{and} \qquad \tau(\beta) = \omega \sigma(\beta)^{\pm 1},
\end{equation}
where $\omega$ is a root of unity that depends on $\tau$ and $\sigma$.

Identifying $K$ with a subfield of $\CC$, we may assume that $\Sigma$ contains the inclusion map $\mathrm{id}$ and that $(i_0,\mathrm{id}) \in J$ for some $i_0$. In particular, taking $(j,\sigma) = (i_0,\mathrm{id})$ we see that $\tau(\beta) = \omega \beta^{\pm 1}$. Partition $J$ as $ J_+ \cup J_-$, where $ J_{\pm}$ is the set of those $(i,\tau) \in J$ for which $\tau(\beta) = \omega \beta^{\pm 1}$. 

We claim that our assumptions guarantee that the set $J_{-}$ is empty. This is immediate if $\beta$ has no conjugates of the form $\omega \beta^{-1}$ with $\omega$ a root of unity. In the second case where $\cN'$ is a subset of $\NN_0^{m+1}$,  we deduce from equation \eqref{eq:84:01} that for all $(n_i)_{i\in I} , (n'_i)_{i\in I}  \in \cN'$ and all $(i,\tau) \in J$ we have 
\begin{align}\label{eq:84:02}
	n'_i =
	\begin{cases}
		n_i + (n'_{i_0}-n_{i_0}) & \text{if } (i,\tau) \in J_+,\\
		n_i - (n'_{i_0}-n_{i_0}) & \text{if } (i,\tau) \in J_-.		
	\end{cases} 
\end{align} If $J_{-}$ were nonempty, this would show that for a given $(n_i)_{i\in I}$ there are only finitely many possibilities for $(n'_i)_{i\in J}$ since $n_i\geq n'_{i_0}-n_{i_0} \geq -n_{i_0}$ for any $i$ such that $(i,\tau)\in J_-$ for some $\tau\in \Sigma$. This would contradict the fact that $\cN'$ is infinite. Thus $J_-$ is empty.

Since $\beta$ satisfies \eqref{eq:pow:non-degen} and since an embedding of $K$ into $\CC$ is uniquely determined by its value on $\b$, the set $J=J_+$ takes the form $J = I \times \{\mathrm{id}\}$. Equation \eqref{eq:83:03} now takes the form \[ \sum_{i \in I} a_i \beta^{n_i}=0,\] which gives the claim. 
\end{proof}

\begin{proposition}\label{prop:x*beta^i}
Let $\beta$ be an algebraic number satisfying \eqref{eq:pow:non-degen}, let $K = \QQ(\beta)$, and let $Y \subset K$ be a finite set. Then there exists an integer $N$ such that for each nonzero $x \in K$, if for each integer $k$ with $0 \leq k< N$ there exists $j(k) \in \NN_0$ and $y(k)\in Y$ such that
\begin{equation}\label{eq:pow:76:1}
	\Tr_{K/\QQ}(\beta^{k}x)=\Tr_{K/\QQ}(\beta^{j(k)}y(k)),
\end{equation}
then there exists an integer $l \in \ZZ$, $z \in Y$, and an automorphism $\sigma$ of $K$ and a root of unity $\omega$ such that $\sigma(\beta) = \omega \beta^{\pm 1}$ and $x = \beta^l \sigma(z)$. 
\end{proposition}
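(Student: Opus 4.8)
The plan is to pick $N$ large (depending only on $\beta$, $K$ and $|Y|$), to expand the traces in the hypothesis over the embeddings of $K$, and then to run the $S$-unit machinery from the proof of Proposition~\ref{prop:lemmasumsoftraces} while keeping track of the fixed but unknown element $x$. We may assume $\beta \ne 0$ and that $\beta$ is not a root of unity, the remaining cases being immediate. Write $m = [K:\QQ]$, and let $\Sigma$ be the set of embeddings $K \to \CC$, which we regard as containing the inclusion $\mathrm{id}$. Fix a nonzero $x$ satisfying the hypothesis. By the pigeonhole principle there is a $z \in Y$ and a set $A_0 \subseteq \{0,1,\dots,N-1\}$ with $|A_0| \ge N/|Y|$ on which $y(k) = z$. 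The sequence $k \mapsto \Tr_{K/\QQ}(\beta^k x)$ is a nonzero linear recurrent sequence whose characteristic polynomial is the minimal polynomial of $\beta$, which satisfies \eqref{eq:pow:non-degen}; by \cite[Thm.\ 1.1]{Schlickewei-1996} it has at most $C_0 = C_0(K)$ zeros, so if $N > C_0 |Y|$ then $z \ne 0$. Hence $\Tr_{K/\QQ}(\beta^k x) = \Tr_{K/\QQ}(\beta^{j(k)} z)$ with $z \ne 0$ for all $k \in A_0$.

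Expanding through $\Sigma$, for each $k \in A_0$ this becomes a vanishing sum
\[
\sum_{\tau \in \Sigma} \tau(x)\tau(\beta)^k - \sum_{\tau \in \Sigma} \tau(z)\tau(\beta)^{j(k)} = 0
\]
of at most $2m$ terms, with nonzero coefficients $\tau(x)$ and $-\tau(z)$ and with the unknowns $\tau(\beta)^{k}$, $\tau(\beta)^{j(k)}$ lying in the finitely generated group $G = \langle \tau(\beta) : \tau \in \Sigma \rangle$. For each $k \in A_0$ partition the index set $(\{\ast\}\cup\{\star\}) \times \Sigma$ into minimal vanishing subsums; since there are only finitely many such partitions, the pigeonhole principle yields $A_1 \subseteq A_0$ with $|A_1| \ge |A_0|/c_m$ on which the partition is the same. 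Now I would invoke a uniform quantitative form of the $S$-unit theorem: the number of non-degenerate solutions up to scaling of each cell's equation is bounded in terms of $m$ and $\rank G$ only (not the coefficients), so a further application of the pigeonhole principle gives $A_2 \subseteq A_1$, of size $\ge |A_1|/c'_{m,K}$, on which every cell's solution tuple $(\tau(\beta)^{k})_{\ast}$, $(\tau(\beta)^{j(k)})_{\star}$ is the same up to scaling.

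Next I would pin down the cell $P$ of this partition containing $(\ast,\mathrm{id})$, using \eqref{eq:pow:non-degen} throughout. No cell is a singleton (that would force $\tau(x)\tau(\beta)^k = 0$ or $\tau(z)\tau(\beta)^{j(k)} = 0$), and no cell is purely of $\ast$-type with $\ge 2$ elements (a generalised Vandermonde determinant on sufficiently many $k \in A_1$ would force all $\tau(x)$ to vanish). In a mixed cell, having two $\ast$-indices $(\ast,\tau),(\ast,\tau')$ would, via the common scaling class on $A_2$, give $(\tau(\beta)/\tau'(\beta))^{k-k'} = 1$ for distinct conjugates, contradicting \eqref{eq:pow:non-degen}; having two $\star$-indices would likewise force $j(\cdot)$ constant on $A_2$, hence $\beta$ a root of unity. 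So every mixed cell has exactly one $\ast$- and one $\star$-index; in particular $P = \{(\ast,\mathrm{id}),(\star,\tau_1)\}$ for a single $\tau_1 \in \Sigma$, and since the sub-equation attached to $P$ holds for all $k \in A_1$ we obtain $x\beta^k = \tau_1(z)\tau_1(\beta)^{j(k)}$ for $k \in A_1$.

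Finally, choosing $k \ne k'$ in $A_1$ gives $\tau_1(\beta)^{j(k)-j(k')} = \beta^{k-k'}$ with both exponents nonzero, so Lemma~\ref{lem:conjugate-powers}, applied to the conjugates $\tau_1(\beta)$ and $\beta$, yields $j(k)-j(k') = \pm(k-k')$ and $\tau_1(\beta) = \omega\beta^{\pm 1}$ for a root of unity $\omega$. If $\tau_1(\beta) = \omega\beta$, then since $\tau_1(\beta)$ is a conjugate of $\beta$, property \eqref{eq:pow:non-degen} forces $\omega = 1$ and $\tau_1 = \mathrm{id}$, whence $x = \beta^{j(k)-k}z$ and we are done with $\sigma = \mathrm{id}$. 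If $\tau_1(\beta) = \omega\beta^{-1}$, then $\beta^{k-k'} = \tau_1(\beta)^{j(k)-j(k')} \in K$ together with \eqref{eq:pow:non-degen} (in the form $\QQ(\beta^d) = K$ for every $d$) forces $\tau_1(K) = K$, so $\sigma := \tau_1$ is an automorphism of $K$ with $\sigma(\beta) = \omega\beta^{\pm 1}$; substituting back into $x\beta^k = \sigma(z)\sigma(\beta)^{j(k)}$ and using that $j(k)+k$ is constant and $\omega^{j(k)-j(k')} = 1$ (both from Lemma~\ref{lem:conjugate-powers}) identifies $x$ as $\beta^l\sigma(z)$ for a suitable $l \in \ZZ$. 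I expect the two main difficulties to be: obtaining a value of $N$ that depends only on $\beta$, $K$ and $|Y|$ and not on $x$, which is precisely what compels the use of a \emph{uniform} quantitative bound for $S$-unit equations (equivalently, for multiplicities of linear recurrences) rather than mere finiteness; and the careful bookkeeping of roots of unity in the last step, which is also where the generality $\sigma(\beta) = \omega\beta^{\pm1}$ of the conclusion is genuinely needed.
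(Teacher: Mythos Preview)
Your approach is correct in outline and genuinely different from the paper's. Where the paper exploits the $(m{+}1)$-term recurrence $\sum_i a_i\Tr_{K/\QQ}(\beta^{k+i}x)=0$, invokes Proposition~\ref{prop:lemmasumsoftraces} to strip off the trace and obtain $\sum_i a_i\beta^{j(k+i)}y(k+i)=0$, and then uses $S$-unit finiteness to extract a periodicity $j(k+d)=j(k)+e$ that feeds into Lemma~\ref{lem:pow:equal-traces}\ref{it:pow:53:B}, you instead fix $y(k)=z$ by pigeonhole, expand the single equality $\Tr_{K/\QQ}(\beta^k x)=\Tr_{K/\QQ}(\beta^{j(k)}z)$ as a $2m$-term vanishing sum over embeddings, and analyse its minimal vanishing sub-sums directly. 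This bypasses both Proposition~\ref{prop:lemmasumsoftraces} and Lemma~\ref{lem:pow:equal-traces}\ref{it:pow:53:B} in favour of a cell-structure argument, and (via the uniform Evertse--Schlickewei--Schmidt bound you invoke) yields an $N$ depending only on $K$ and $|Y|$ rather than on the actual elements of $Y$, which is slightly sharper than what the paper obtains. The paper's route, by contrast, keeps $y(k)$ variable throughout and reaches the conclusion via the periodicity of $j$, which is longer but avoids the need for uniformity in the coefficients.

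Two points deserve tightening. First, your elimination of a purely $\ast$-type cell ``by a generalised Vandermonde determinant on sufficiently many $k\in A_1$'' is not quite right as stated, since $A_1$ is not an interval; what you actually need is that the non-degenerate recurrence $k\mapsto\sum_i\tau_i(x)\tau_i(\beta)^k$ has at most $C_0(K)$ zeros by Schlickewei, so if $|A_1|>C_0(K)$ it vanishes identically, and \emph{then} Vandermonde on consecutive $k$ gives $\tau_i(x)=0$. Second, in the case $\sigma(\beta)=\omega\beta^{-1}$ your substitution gives $x=\omega^{j(k)}\beta^{-j(k)-k}\sigma(z)$; the facts ``$j(k)+k$ constant'' and ``$\omega^{j(k)-j(k')}=1$'' only show that $\omega^{j(k)}$ is \emph{constant} on $A_1$, not that it equals $1$, so a root-of-unity factor survives. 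The paper faces the same issue and handles it by arranging $k$ divisible by the order $\mu$ of the torsion in $K^*$, so that $\sigma(\beta^k)=\beta^{\pm k}$ and one obtains $\sigma(x)=\beta^{l'}z$ cleanly; in your setup the simplest fix is to restrict at the outset to $k\in\mu\ZZ\cap[0,N)$ before the first pigeonhole, and then read off $\sigma^{-1}(x)=\beta^{j(k)\pm k}z$ rather than solving for $x$ directly.
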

\begin{proof}
Removing if necessary redundant elements of $Y$, we may assume without loss of generality that the ratio $y/y'$ is not a power of $\beta$ for any $y,y' \in Y$.

Let $N$ be a large integer, to be determined in the course of the proof.
	Let $X^m+a_{m-1}X^{m-1}+\cdots+a_0$ be the minimal polynomial of $\beta$ over $\QQ$, and put $a_m=1$. Note that for any $k\geq 0$ we have 
\begin{equation}\label{eq:pow:76:2}	
	\sum_{i=0}^m a_{i} \Tr_{K/\QQ}(\beta^{k+i} x)=0.
\end{equation}
For any nonempty proper subset $I$ of $\{0,\ldots,m\}$, the sequence $(f^{(I)}_k)_{k=0}^{\infty}$ given by
\[ 
	f^{(I)}_k =\sum_{i\in I} a_{i} \Tr_{K/\QQ}(\beta^{k+i}x)
\]
 is a nondegenerate linear recurrence sequence (and is not identically zero by the nondegeneracy of $\Tr_{K/\QQ}$), and so by the Skolem--Mahler--Lech Theorem it has only finitely many zeros. By the results of Schlickewei \cite[Thm.\ 1.1]{Schlickewei-1996}, the number of these zeros is bounded by a constant $C$ that depends only on $K$. Thus, replacing $N$ with $\ip{(N-C)/(C+1)}$ and $x$ with $x' = \beta^{k_0} x$ for suitably chosen $k_0$, we may assume that $f^{(I)}_k \neq 0$ for $0 \leq k < N$. Repeating this procedure for all $I$, we may assume that the sum $\sum_{i\in I} a_{i} \Tr_{K/\QQ}(\beta^{k+i}x)$ is nonzero for each  $0 \leq k < N$ and each nonempty proper subset $I$ of $\{0,\ldots,m\}$. 
 
  \newcommand{\cJ}{\mathcal{J}}
 Combining \eqref{eq:pow:76:1}, \eqref{eq:pow:76:2} and the reduction above, for $0 \leq k < N-m$ we have
 \begin{equation}\label{eq:pow:76:3}	
	\sum_{i=0}^m a_{i} \Tr_{K/\QQ}(\beta^{j(k+i)}y(k+i))=0,
\end{equation}
and no proper nonempty sub-sum of \eqref{eq:pow:76:3} vanishes.
 Applying Proposition \ref{prop:lemmasumsoftraces} with $a_i' = a_i y_i$ for all possible choices of $(y_i)_{i=0}^m \in Y^{m+1}$,  we conclude that there exists a finite set $\cJ \subset \NN_0^{m+1}$ such that for each $0 \leq k < N-m$ we have
 \begin{equation}\label{eqn:powbetaj}
 \sum_{i=0}^m a_i \beta^{j(k+i)}y(k+i)  =0
 \end{equation}
 unless $(j(k+i))_{i=0}^m \in \cJ$. Furthermore, no proper nonempty sub-sum of \eqref{eqn:powbetaj} vanishes. 
 
As a consequence of Lemma \ref{lem:pow:equal-traces}\ref{it:pow:53:A}, for each $(j_i)_{i=0}^m \in \NN_0^{m+1}$ and $(y_i)_{i=0}^m \in Y^{m+1}$, there exists at most one value of $k$ with $0 \leq k < N-m$ such that $(j(k+i))_{i=0}^m = (j_i)_{i=0}^m$ and $(y(k+i))_{i=0}^m = (y_i)_{i=0}^m$. Replacing once more $N$ with $\ip{(N-\abs{\cJ}\cdot \abs{Y}^{m+1})/(\abs{\cJ}\cdot \abs{Y}^{m+1}+1)}$, we may assume that \eqref{eqn:powbetaj} holds for all $0 \leq k < N-m$.
  	
It follows from the finiteness of the number of solutions of $S$-unit equations that the number of $(m+1)$-tuples $(\beta^{j(k+i)}y(k+i))_{i=0}^m$ that satisfy \eqref{eqn:powbetaj}, regarded up to scaling, is bounded by a constant $C'$ that depends only on $\beta$ and $Y$. Of course, the number of $(m+1)$-tuples $(y(k+i))_{i=0}^m$ is bounded by $\abs{Y}^{m+1}$. Letting $C'' = C' \abs{Y}^{m+1}$ and assuming, as we may, that $N > C'' + m$, we can find $k_1,k_2$ with $0 \leq k_1 < k_2 \leq C''$ such that 
\begin{equation}\label{eq:pow:76:5}	
  	  j(k_1+i)-j(k_1) = j(k_2+i)-j(k_2) \qquad \text{and}\qquad 
  	  y(k_1+i) = y(k_2+i)
\end{equation}
for all $i$ with $0 \leq i \leq m$. It follows from \eqref{eqn:powbetaj}  that, if for some $k$ we are given $j(k+1)-j(k),\ldots,j(k+m-1)-j(k)$ and $y(k),y(k+1),\dots,y(k+m-1)$, then we can uniquely determine the value of $\beta^{j(k+m)-j(k)}y(k+m)$, and hence also the values of  $j(k+m)-j(k)$ and $y(k+m)$. As a consequence, \eqref{eq:pow:76:5} holds, more generally, for all $i$ with $0 \leq i < N-k_2$. Setting $d := k_2 - k_1$ and $e := j(k_2) - j(k_1)$, for all $k$ with $k_1 \leq k< N-d$ we have
\begin{equation}\label{eq:pow:76:6}	
  	  j(k+d) = j(k) + e\qquad \text{and}\qquad y(k+d) = y(k).
\end{equation}  
Iterating \eqref{eq:pow:76:6}, we conclude that for any $k$ and $l\geq 0$ we have
\begin{equation}\label{eq:pow:76:7}	
  	  j(k+ld)= j(k) + le,
\end{equation}    
provided that $k_1 \leq k < N - ld$. Recalling how $j(k)$ was defined, we conclude from \eqref{eq:pow:76:7} that
\begin{equation}\label{eq:pow:76:8}	
	\Tr_{K/\QQ}(\beta^{k+ld} x)=\Tr_{K/\QQ}(\beta^{j(k)+le}y(k)).
\end{equation}    

Let $\mu$ be the order of the (cyclic) group of roots of unity contained in $K$. Choose $k$ to be an integer such that $k \geq k_1$ and $k$ is divisible by $\mu$. Assume moreover, as we may, that $N$ is sufficiently large so that $k+(2m-1)d  < N$. We conclude from \eqref{eq:pow:76:8} and Lemma  \ref{lem:pow:equal-traces}\ref{it:pow:53:B} that there exists an automorphism $\sigma$ of $K$ with $\sigma(\beta^d) = \beta^e$ and $\sigma(\beta^k x) = \beta^{j(k)} y(k)$. By Lemma \ref{lem:conjugate-powers} we have $e = \pm d$ and $\sigma(\beta) = \omega \beta^{\pm 1}$ for some root of unity $\omega$. Since $k$ is divisible by $\mu$, we have $\sigma(\beta^k)=\beta^{\pm k}$, and so $\sigma(x)=\beta^{j(k)\mp k} y(k)$. This concludes the proof. \end{proof}

We have now all the technical tools necessary for the proof of  Theorem \ref{thm:Tr-vs-Pownew}.
\begin{proof}[Proof of Theorem \ref{thm:Tr-vs-Pownew}]
We may assume that $\beta \neq 0$. Put $K = \QQ(\beta)$ and let $X$ denote the set of values of the sequence $(n_i)_{i=0}^{\infty}$.  If it were the case that $\abs{\tau(\beta)} \leq 1$ for all embeddings $\tau$ of $K$ into $\CC$, then $X$ would be a bounded subset of $\QQ$. Since a generalised polynomial map on a bounded real interval has only a finite number of discontinuities, $X$ would then be finite, and $\beta$ would be a root of unity, in which case the claim would be clear. Thus, we may assume that there exists an embedding $\tau_0$ of $K$ into $\CC$ such that $\abs{\tau_0(\beta)} > 1$.

Let $d\in \NN$ be the lowest common multiple of the orders of all roots of unity that may occur as quotients or products of two conjugates of $\beta$. Then, $\gamma = \beta^d$ satisfies \eqref{eq:pow:non-degen} and has no conjugates of the form $\omega \gamma^{-1}$ with $\omega$ a root of unity except possibly for $\omega = 1$.   Put $M = \QQ(\gamma)$

Using Lemma \ref{lem:lrseqform}\ref{lem:lrseqform1}, we choose $z\in K$ so that $$n_i = \Tr_{K/\QQ}(\beta^i z) \quad \text{for all } i\in \NN_0.$$ Let $S =\set{\Tr_{K/M}(\beta^r z)}{0 \leq r < d}$. Using the transitivity of the trace, we may write each $\Tr_{K/\QQ}(\beta^{i}z)$ in the form $$\Tr_{K/\QQ}(\beta^{i}z)=\Tr_{M/\QQ}(\gamma^{j} s) \quad \text{for } j=\lfloor i/d\rfloor,\ s=\Tr_{K/M}(\beta^{i-dj}z).$$ Thus, the set $X$ takes the form $$X=\set{\Tr_{M/\QQ}(\gamma^i s)}{i\in \NN_0,\ s\in S}.$$  Observe that $S$ contains some nonzero element, since otherwise $(n_i)_{i=0}^{\infty}$ would be identically zero.

\begin{claim*} The set $$\{\gamma^{l} s \mid l\in \NN_0,\ s\in S\}$$ is a generalised polynomial subset of $M$.
\end{claim*}
\begin{claimproof}
Let $N$ be a large integer, to be determined shortly, and consider the set $$A=\{ x\in M \mid \Tr_{M/\QQ}(\gamma^k x) \in X \text{ for all } 0\leq k < N\}.$$ Since $X$ is a generalised polynomial subset of $\QQ$, and since the family of generalised polynomial subsets of $M$ is closed under finite intersections and taking preimage by a generalised polynomial map, we see that $A$ is a generalised polynomial subset of $M$. 

It is clear that $$\set{\gamma^l s }{ l \in \NN_0,\ s \in S} \subset A.$$ 
In the opposite direction, observe that if $x$ belongs to $A$, then for every $0\leq k <N$ there exists some $j(k)\in \NN_0$ and $s(k)\in S$ such that $$\Tr_{M/\QQ}(\gamma^k x) = \Tr_{M/\QQ}(\gamma^{j(k)}s(k)).$$
Choosing $N$ to be sufficiently large for the claim of Proposition \ref{prop:x*beta^i} to hold, we infer that $x = \gamma^l \sigma(s)$ for some $s \in S$, $l \in \ZZ$, and automorphism $\sigma$ of $K$ with $\sigma(\gamma) = \gamma^{\pm 1}$.   We will consider two cases depending whether or not $\gamma$ and $\gamma^{-1}$ are conjugate.

{\it Case I:} (Suppose that $\gamma$ is not conjugate to $\gamma^{-1}$). The above reasoning shows that  $$\set{\gamma^l s }{ l \in \NN_0,\ s \in S} \subset A \subset \set{\gamma^l s }{ l \in \ZZ,\ s \in S}.$$ Suppose that $x=\gamma^l s$ belongs to $A$ for some $l\in \ZZ$, $l <0$, $s\in S$. Then in particular $$\Tr_{M/\QQ}(\gamma^l s) = \Tr_{M/\QQ}(\gamma^j s') \quad \text{for some } j\in\NN_0,\ s'\in S.$$ From the second part of Proposition \ref{prop:lemmasumsoftraces} we deduce that there are only finitely many possibilities for $x$. Indeed, except for finitely many possible values of $l$, we have one of three possibilities: $\gamma^l s=0$, in which case $x = 0$; $\gamma^l s = \gamma^j s' \neq 0$, in which case $\gamma^{\abs{l}} \leq s/s'$ and hence there are only finitely many possibilities for $x$; or $\gamma^j s'=0$, in which case $\Tr_{M/\QQ}(\gamma^l s)=0$, and another application of Proposition \ref{prop:lemmasumsoftraces} shows that there are only finitely many possibilities for $x$. Thus the set $\set{\gamma^l s }{ l \in \NN_0,\ s \in S}$ is generalised polynomial since it differs from $A$ only on a finite subset. 

{\it Case II:} (Suppose that $\gamma$ is conjugate to $\gamma^{-1}$). Let $\sigma$ be the automorphism of $M$ such that $\sigma(\gamma)=\gamma^{-1}$. Since $$\Tr_{M/\QQ}(\gamma^l s)=\Tr_{M/\QQ}(\sigma(\gamma^{l}s))=\Tr_{M/\QQ}(\gamma^{-l}\sigma(s)),$$ we deduce that $$\set{\gamma^l s }{ l \in \NN_0,\ s \in S} \cup \set{\gamma^{-N+1-l} \sigma(s) }{ l \in \NN_0,\ s \in S}  \subset A \subset \set{\gamma^l s }{ l \in \ZZ,\ s \in S\cup\sigma(S)}.$$ Suppose that $x=\gamma^l s$ belongs to $A$ for some $l\in \ZZ$, $l <0$, $s\in S$. Then in particular $$\Tr_{M/\QQ}(\gamma^l s) = \Tr_{M/\QQ}(\gamma^j s') \quad \text{for some } j\in\NN_0,\ s'\in S.$$ On the other hand, we have $$\Tr_{M/\QQ}(\gamma^l s) = \Tr_{M/\QQ}(\sigma(\gamma^l s))=\Tr_{M/\QQ}(\gamma^{-l} \sigma(s)).$$ Applying the first part of Proposition \ref{prop:lemmasumsoftraces} to the equality $$\Tr_{M/\QQ}(\gamma^{-l} \sigma(s)) = \Tr_{M/\QQ}(\gamma^j s'),$$ we deduce as before that except for finitely many possible values of $x$ we have $\sigma(x)=\gamma^{-l} \sigma(s)=\gamma^j s'$, so $x=\gamma^{-j}\sigma(s')$. A similar (simpler) reasoning shows that if $x=\gamma^l \sigma(s)$ belongs to $A$ for some $l\in \NN_0$, $s\in S$, then except for finitely many possible values of $x$ we have $x=  \gamma^j s'$ for some $j\in \NN_0$, $s'\in S$. We conclude that the set $$A'=\set{\gamma^l s }{ l \in \NN_0,\ s \in S} \cup \set{\gamma^{-l} \sigma(s)}{l\in \NN_0, s\in S}$$  is generalised polynomial since it differs from $A$ only on a finite subset. Let $C$ be the generalised polynomial subset of $M$ consisting of the elements $x\in M$ such that $\abs{\tau_0(x)}<1$. Removing $C$ from $A'$ retains all the elements $x=\gamma^l s$ with $s\in S$, $s\neq 0$, and $l\in \NN_0$ sufficiently large, and eliminates $x=0$ and all the elements $x=\gamma^{-l} \sigma(s)$ with $s\in S$, and $l$ sufficienly large. Thus, the set $\set{\gamma^l s }{ l \in \NN_0,\ s \in S}$ is generalised polynomial since it differs from $A'\setminus C$ only on a finite subset. 
\end{claimproof}

Let $S_0$ be the set obtained from $S$ by performing the following operations: if $0\in S$, replace $S$ by $S\setminus \{0\}$; replace $S$ by $s^{-1}S$ for some $s\in S$; remove any $s \in S$ such that $s=\gamma^j s'$ for some $j\in \NN$, $s'\in S$. We obtain in this manner a finite set $S_0$ with $0\notin S_0$, $1\in S_0$, and such that $D_0=\set{\gamma^l s}{l\in \NN_0,\ s\in S_0}$ is a generalised polynomial subset of $M$. Consider the set $$D_1 = \{x\in M \mid s x \in D_0 \text{ for all }s\in S_0\}.$$ By construction, $D_1$ is a generalised polynomial subset of $M$ and can be written in the form $$D_1 =  \{\gamma^l s \mid l\in \NN_0,\ s\in S_1\} $$ for some finite set $S_1$ such that $0\notin S_1$, $1\in S_1$, and the quotient of two different elements of $S_1$ is not an integral power of $\gamma$. Moreover, we have $|S_1|\leq |S_0|$, with equality occuring only if for any $s,s'\in S_0$, the product $s s'$ is equal to $\gamma^t s''$ for some $t\in \ZZ$ and $s''\in S_0$. This latter condition is equivalent to the condition that the image of $S_0$ in the quotient group $M^*/\langle \gamma\rangle$ is closed under multiplication, and hence is a finite group.

Continuing this procedure, we obtain a sequence of finite sets $(S_n)_{n=0}^{\infty}$ with $|S_0|\geq |S_1|\geq \cdots$ and corresponding generalised polynomial sets  $D_0\supset D_1\supset \cdots$. Let $m$ be such that $|S_m|=|S_{m+1}|$. This means that $S_m$  is such that $0\notin S_m$, $1\in S_m$, the quotient of two different elements of $S_m$ is not an integral power of $\gamma$, and the image of $S_m$ in  $M^*/\langle \gamma\rangle$ is a finite group. It follows that any $s\in S_m$ is a rational power of $\gamma$. Let $G$ be the subgroup of $M^*$ generated by $S_m$ and $\gamma$. Then $G$ is finitely generated and $\langle \gamma \rangle$ is its subgroup of finite index. Hence, by Chevalley's theorem  \cite[Thm.\ 1]{Chevalley51}, $\langle \gamma\rangle$ is a congruence subgroup of $G$, meaning that $\langle \gamma\rangle=G \cap \Lambda$ for some $\Lambda\subset M$ that is a union of finitely many cosets of a lattice. Thus, the set $$D_m\cap \Lambda = \{\gamma^l s \mid l\in \NN_0, s\in S_m\} \cap \Lambda = \{\gamma^l \mid l\in \NN_0\}$$ is a generalised polynomial subset of $M$. Since generalised polynomial sets are closed under dilations and finite unions, this also implies that $\{\beta^l \mid l\in \NN_0\}$ is a generalised polynomial subset of $M$, and hence by Proposition \ref{prop:gpsetsexam}\ref{prop:gpsetsexam1} also of $K$.
\end{proof}

\section{Non-hereditary generalised polynomial setshere}\label{sec:No-Heir}

In light of Theorem \ref{thm:Pisot-hereditary-Z}, it is natural to ask which generalised polynomial sets of integers are hereditary. It is not hard to see that each set $E \subset \ZZ$ with positive density
\begin{equation}\label{eq:her:75:1}
	d(E) := \lim_{N \to \infty} \frac{\abs{E \cap [-N,N]}}{2N+1} > 0
\end{equation}
has a subset $E'$ that does not have density, meaning that
\[
	\liminf_{N \to \infty} \frac{\abs{E \cap [-N,N]}}{2N+1}  =: \underline{d}(E) < \overline{d}(E) := \limsup_{N \to \infty} \frac{\abs{E \cap [-N,N]}}{2N+1}.
\]
Since the density exists for each generalised polynomial set, no generalised polynomial set with positive density is hereditary.
When it comes to sets with zero density, it remains the case that we expect most of them to not be hereditary, but proving this becomes more difficult. However, we can at least show that not all of them are hereditary.

\begin{theorem}\label{thm:non-hereditarynew}
	There exists a generalised polynomial set $E \subset \ZZ$ with $d(E) = 0$ as well as a subset $E' \subset E$ that is not a generalised polynomial set.
\end{theorem}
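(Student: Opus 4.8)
The plan is to construct $E$ as (a set related to) the values of a generalised polynomial map built from Sturmian-type expressions, in such a way that $E$ naturally carries a sparse ``coding'' of an arbitrary subset of a countable index set, and then choose $E'$ to be the part of $E$ coding a subset that is not itself generalised polynomial. More precisely, I would look for a generalised polynomial set of the form $E = \set{n \in \ZZ}{\fpa{\alpha n + \beta} \in J_n}$ where the relevant window $J_n$ depends on the scale of $n$ (say on $\floor{\log n}$ or on which lacunary block $n$ falls into), arranged so that $E$ is a union of very sparse blocks $B_1, B_2, \dots$ with $|B_k|$ bounded and $\min B_k \to \infty$ fast enough that $d(E) = 0$ automatically, while the internal structure of each block $B_k$ lets one read off one bit. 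The point of working with a genuinely sparse $E$ is precisely to evade the obstruction noted in the paragraph before the theorem: for positive-density sets non-heredity is free, but it also forces $E$ itself out of the generalised polynomial class, so one must be careful that $E$ stays generalised polynomial while some subset does not.

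Concretely, I expect the cleanest route is a counting/cardinality argument rather than an explicit coding. The key observation is that the family of generalised polynomial subsets of $\ZZ$ is, in a suitable sense, ``not too large'': a generalised polynomial map is specified by finitely many real parameters (the coefficients appearing in the finitely many $\floor{\cdot}$ operations, additions and multiplications) together with finitely many discrete choices (the shape of the formula). So the collection of generalised polynomial subsets of $\ZZ$ has the cardinality of the continuum. If I can build a single generalised polynomial set $E$ of density zero that has $2^{\aleph_0}$ subsets which are ``pairwise far apart'' in the sense of differing on infinitely many of the sparse blocks, then a cardinality argument alone will not immediately finish — I need that these subsets cannot all be generalised polynomial. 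Here the right move is to make the blocks carry independent information: if $E = \bigsqcup_k B_k$ with each $B_k$ a ``gadget'' and a subset $E'$ is determined by choosing, for each $k$, one of two options inside $B_k$, then $E'$ is generalised polynomial only if the resulting bit-sequence $(\epsilon_k)_k$ is computed by a generalised polynomial map evaluated at the block index; and since there are uncountably many bit-sequences but only continuum-many — wait, that is the same cardinality, so I instead invoke the structural rigidity of generalised polynomials: a bounded generalised polynomial sequence, by the work underlying the density statements cited in the excerpt, has strong uniformity/recurrence properties (uniform convergence of block averages, as in the Pisot-numbers remark and \cite[Ex.\ B.2]{Konieczny-2021-JLM}), so a suitably ``random'' or ``non-uniform'' bit-sequence $(\epsilon_k)$ cannot arise this way.

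Putting this together, the steps I would carry out are: (1) fix a rapidly growing sequence of scales $N_1 \ll N_2 \ll \cdots$ and define $E$ to consist of short, widely spaced blocks $B_k \subset [N_k, 2N_k)$, each block being a level set of a fixed Sturmian-type generalised polynomial restricted to that scale, with the block positions themselves cut out by a generalised polynomial condition on $\floor{\log_2 n}$ (so the ``which scale am I in'' predicate is generalised polynomial); (2) verify $d(E) = 0$ from $\sum_k |B_k|/N_k < \infty$ and the lacunarity of the $N_k$; (3) verify $E$ is generalised polynomial by exhibiting the explicit expression combining the scale-selector with the within-block Sturmian formula; (4) show that for a suitable ``complicated'' sequence $(\epsilon_k) \in \{0,1\}^\NN$ — e.g. one whose block-averages do not converge, or one that is not itself a bracket sequence — the set $E'$ obtained by deleting the ``$\epsilon_k = 0$'' half of each $B_k$ is not generalised polynomial, using the uniformity properties of generalised polynomial sequences to derive a contradiction. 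The main obstacle, and the step I would spend the most care on, is step (4): transferring a ``this subset is not generalised polynomial'' conclusion from the abstract bit-sequence $(\epsilon_k)$ to the actual set $E' \subset \ZZ$ requires showing that $1_{E'}$ being a generalised polynomial map would force $(\epsilon_k)$ to inherit generalised-polynomial-like structure (via composition with the generalised polynomial scale-extraction and block-position maps, Proposition-style closure properties), and one must make sure the sparsity and the arithmetic of the block positions do not accidentally destroy or create such structure. I would therefore design the block positions $N_k$ to be, say, a generalised polynomial sequence themselves, so that the correspondence between $n \in E'$ and the bit $\epsilon_k$ is itself generalised polynomial in both directions, reducing the problem cleanly to the existence of a $\{0,1\}$-sequence that is not generalised polynomial as a sequence — which is classical.
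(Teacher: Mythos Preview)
Your proposal has a genuine gap, centered on step (4) and the construction of $E$ that would make step (4) go through.

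First, the construction of $E$: the ``scale selector'' $\lfloor \log_2 n \rfloor$ is not a generalised polynomial expression, so you cannot use it to cut out the blocks. You acknowledge this and propose instead to make the block positions $N_k$ a generalised polynomial sequence in $k$; but a generalised polynomial map $\ZZ \to \ZZ$ grows at most polynomially, so $N_k$ must then be polynomial in $k$, and you need the resulting set $E = \bigcup_k B_k$ (say, a set like the perfect squares, or a finite thickening thereof) to itself be a generalised polynomial set. You do not establish this, and it is far from clear; indeed, producing explicit generalised polynomial sets of density zero outside the Pisot/Salem framework is exactly the difficulty. Note also that the Pisot-type sets you might reach for (Fibonacci numbers, etc.) are provably hereditary by Theorem~\ref{thm:Pisot-hereditary-Z}, so they cannot serve as $E$.

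Second, the obstruction: the ``block averages do not converge'' criterion you invoke is exactly the positive-density argument from the paragraph before the theorem, and it is vacuous for density-zero sets --- both $E$ and $E'$ have density zero, so neither is ruled out. The paper's introduction flags precisely this point: most tools for showing a density-zero set is not generalised polynomial also rule out all of its density-zero supersets, so cannot separate $E'$ from $E$. Your alternative, reducing to ``$(\epsilon_k)$ is not a bracket sequence'', needs the two-way generalised polynomial correspondence between $n \in E$ and the index $k$, which runs into the growth obstruction above.

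The paper's approach sidesteps both problems. For the obstruction, it uses \emph{subword complexity}: a finitely-valued generalised polynomial sequence has $p_g(N) = O(N^C)$ (a result from \cite{AdamczewskiKonieczny}), and crucially this is a property that \emph{can} distinguish a subset from its superset. For the construction of $E$, it uses another black-box from \cite{AdamczewskiKonieczny}: there exist generalised polynomial sets of density zero with $\abs{E \cap [0,N)} \geq f(N)N$ for any prescribed $f(N) \to 0$. Choosing $f$ to decay slowly enough, one finds in $E$ enough room at each scale $L$ to carve out a subset $F \subset E$ with $p_{1_F}(L) \geq 2^{\sqrt{L}}$, which is super-polynomial and hence not generalised polynomial. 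The argument is short once these two ingredients are in hand; the real content is imported from \cite{AdamczewskiKonieczny}.
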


Our proof of Theorem \ref{thm:non-hereditary} relies on two components from \cite{AdamczewskiKonieczny}. The first ingredient is a polynomial bound on subword complexity of finitely-valued generalised polynomials. Recall that the \emph{subword complexity} $p_a$ of a sequence $a \colon \ZZ \to \Sigma$ taking values in a finite alphabet $\Sigma$ is the map that assigns to a positive integer $N$ the number of distinct length-$N$ subsequences of $a$:
\begin{equation}\label{eq:Non:A:sub-comp}
	p_a(N) := \abs{ \set{ \bra{a(m+n)}_{n=0}^{N-1} }{m \in \ZZ} }.
\end{equation}
The subword complexity of a sequence $\NN \to \Sigma$ is defined analogously. If $\abs{\Sigma} = k$, we have the trivial upper bound $p_a(N) \leq k^N$. 

\begin{theorem}[{\cite[Thm.\ A]{AdamczewskiKonieczny}}]\label{thm:subword-complexity}
	Let $g \colon \ZZ \to \Sigma$ be a generalised polynomial taking values in a finite set $\Sigma \subset \RR$. Then there exists a constant $C = C(g) > 0$ such that $p_g(N) = O(N^C)$ as $N \to \infty$.
\end{theorem}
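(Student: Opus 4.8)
The natural route is through the structure theory of generalised polynomials, which turns the question into one of real algebraic geometry on a nilmanifold. The first step is to invoke the Bergelson--Leibman structure theorem, in the refined \emph{exact} form, to obtain a compact nilmanifold $X = G/\Gamma$ with a fixed Mal'cev basis, an element $b \in G$, a point $x_0 \in X$, and a partition $\mathcal{P} = \{X_s : s \in \Sigma\}$ of $X$ whose pieces are \emph{semialgebraic of bounded complexity}: under the Mal'cev coordinates identifying $X$ with the fundamental parallelepiped $[0,1)^{\dim G}$, each $X_s$ is a Boolean combination of at most $k_0 = k_0(g)$ sets of the form $\{P \ge 0\}$ with $\deg P \le \delta_0 = \delta_0(g)$; and, crucially, $g(n) = s$ if and only if $b^n x_0 \in X_s$ for \emph{every} $n \in \ZZ$.

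Granting this representation, the next step is a short combinatorial reduction. Since $b^{m+j} x_0 = b^{j}(b^{m}x_0)$, the length-$N$ window $\bigl(g(m), g(m+1), \dots, g(m+N-1)\bigr)$ is read off from the cell of the refined partition
\[
	\mathcal{Q}_N := \bigvee_{j=0}^{N-1} b^{-j}\mathcal{P}
\]
of the \emph{fixed} nilmanifold $X$ to which $b^m x_0$ belongs: if $b^m x_0$ and $b^{m'} x_0$ lie in the same cell of $\mathcal{Q}_N$, then $g(m+j) = g(m'+j)$ for $0 \le j < N$. Hence $p_g(N)$ is at most the number of non-empty cells of $\mathcal{Q}_N$, and it suffices to bound the latter.

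The third step controls the semialgebraic complexity of $\mathcal{Q}_N$. Each $b^{-j}\mathcal{P}$ is the pullback of $\mathcal{P}$ along the map $x \mapsto b^{j}x$ on $X$. In Mal'cev coordinates the coordinates of $b^{j}$ are polynomial in $j$ of degree bounded in terms of the nilpotency class of $G$, so the map $x \mapsto b^{j} x$ on $[0,1)^{\dim G}$, followed by reduction modulo $\Gamma$, is piecewise polynomial of bounded degree with only $O_g(j^{O_g(1)})$ pieces --- the translate of the bounded fundamental parallelepiped by the ``polynomially large'' element $b^{j}$ meets that many $\Gamma$-translates of the parallelepiped. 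Consequently $b^{-j}\mathcal{P}$ is cut out by $O_g(j^{O_g(1)})$ polynomial inequalities of degree $O_g(1)$, and summing over $0 \le j < N$ shows that $\mathcal{Q}_N$ is cut out by $M = O_g(N^{O_g(1)})$ polynomial inequalities of degree $O_g(1)$ in the $\dim G$ Mal'cev coordinates. By the Milnor--Thom/Warren bound on the number of cells of an arrangement of $M$ polynomials of bounded degree in a fixed number $v = \dim G$ of variables, the number of non-empty cells of $\mathcal{Q}_N$ is $O_g(M^{v}) = O_g(N^{C})$ for a suitable constant $C = C(g)$. Combined with the previous step, this gives $p_g(N) = O(N^{C})$.

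The last two steps are routine --- standard semialgebraic geometry and elementary bookkeeping. The substance of the proof is in the first step: the Bergelson--Leibman theorem in its original formulation only represents $g$ correctly off a set of density zero, and promoting it to a representation valid for \emph{all} $n \in \ZZ$, with a genuinely semialgebraic partition of bounded complexity, requires analysing how the iterated $\lfloor\,\cdot\,\rfloor$ operations in the expression defining $g$ determine the discontinuity loci of the representing function. This is the point at which one must work with the fine structure of generalised polynomial expressions rather than treat them as a black box; a secondary technical point, in the third step, is keeping the interaction between reduction modulo $\Gamma$ and the polynomial action of $b^{j}$ under control so that the complexity bound for $b^{-j}\mathcal{P}$ stays polynomial in $j$.
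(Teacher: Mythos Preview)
The paper does not prove this theorem; it is quoted without proof from \cite{AdamczewskiKonieczny} and used as a black box in Section~\ref{sec:No-Heir}. There is therefore no proof in the present paper to compare against.

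That said, your sketch is essentially the argument carried out in the cited reference: represent $g$ exactly via a nilmanifold orbit and a semialgebraic partition, observe that length-$N$ subwords are determined by cells of the common refinement $\bigvee_{j=0}^{N-1} b^{-j}\mathcal{P}$, control the semialgebraic complexity of the pullbacks $b^{-j}\mathcal{P}$ polynomially in $j$, and apply a Milnor--Thom/Warren cell-count. You have also correctly located the main difficulty, namely upgrading the Bergelson--Leibman representation from ``almost every $n$'' to ``every $n$'' while keeping the partition genuinely semialgebraic of bounded complexity; this is indeed where most of the work in \cite{AdamczewskiKonieczny} lies, and your proposal treats it as an input rather than carrying it out. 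As a proof outline this is accurate, but be aware that the exact-representation step is substantial and not a routine consequence of the standard structure theorem.
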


The second ingredient is the existence of generalised polynomial sets $E$ with zero density, but with the expression in \eqref{eq:her:75:1} converging to $0$ arbitrarily slowly. The following result was originally stated for subsets of $\NN$, but the adaptation to $\ZZ$ is immediate.
\begin{theorem}[{\cite[Prop.\  8.12]{AdamczewskiKonieczny}}]\label{thm:slow-decay}
	Let $f \colon \NN \to [0,1]$ be a sequence with $f(N) \to 0$ as $N \to \infty$. Then there exists a generalised polynomial set $E \subset \ZZ$ such that $d(E) = 0$ and $\abs{E \cap [0,N)} \geq f(N)N$.
\end{theorem}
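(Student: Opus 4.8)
The plan is to obtain $E$ as an output of Theorem~\ref{thm:slow-decay} applied to a function $f$ that decays to zero sufficiently slowly, for instance $f(N)=1/\log\log(N+16)$, which is decreasing with $f(N)\to 0$ and $f(2^k)2^{k/2}\to\infty$. Theorem~\ref{thm:slow-decay} then provides a generalised polynomial set $E\subset\ZZ$ with $d(E)=0$ and $\abs{E\cap[0,N)}\ge f(N)N$ for all $N$. It remains to construct a subset $E'\subset E$ whose characteristic sequence $1_{E'}\colon\ZZ\to\{0,1\}$ has superpolynomial subword complexity, for then $E'$ cannot be generalised polynomial by Theorem~\ref{thm:subword-complexity}. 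The set $E'$ will be built as a disjoint union of ``block-local'' pieces, each responsible for forcing $p_{1_{E'}}(N)>N^{C}$ at one scale $N$.

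\emph{Step 1 (rich blocks).} I first claim that there are infinitely many $i\in\NN$ with $\abs{E\cap[2^i,2^{i+1})}\ge\delta_i 2^i$, where $\delta_i:=f(2^{2i})/2$. Indeed, if this failed for every $i\ge i_0$, then for $k>i_0$ one could split the sum $\sum_{i=i_0}^{k-1}\abs{E\cap[2^i,2^{i+1})}$ at $i=\lfloor k/2\rfloor$, bound the low part by $O(2^{k/2})$, and, using that $f$ is decreasing together with $2^{2i}\ge 2^k$ for $i\ge k/2$, bound the high part by $\tfrac12 f(2^k)2^k$; this would give $\abs{E\cap[0,2^k)}\le O(2^{k/2})+\tfrac12 f(2^k)2^k$, contradicting $\abs{E\cap[0,2^k)}\ge f(2^k)2^k$ for large $k$ since $f(2^k)2^{k/2}\to\infty$.

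\emph{Step 2 (encoding a complex word inside one block).} Fix a rich block $I=[2^i,2^{i+1})$ of length $L=2^i$ with $t:=\abs{E\cap I}\ge\delta_i L$, and set $N:=\lceil i/\delta_i\rceil$. Partition $I$ into $\lfloor L/N\rfloor$ consecutive subintervals of length $N$. A counting argument (if too few subintervals were rich, the total count would be smaller than $\delta_i L$) shows that at least $r:=\delta_i L/(2N)$ of these subintervals meet $E$ in at least $\delta_i N/2$ points; call them $B_1,\dots,B_{r}$. Process $B_1,B_2,\dots$ greedily, maintaining the collection $\mathcal{Q}$ of patterns in $\{0,1\}^N$ (a pattern being $1_{E'}$ restricted to a subinterval, translated to start at $0$) already assigned: whenever $\abs{\mathcal{Q}}<2^{\delta_i N/2}$, the next subinterval $B_s$ admits at least $2^{\abs{E\cap B_s}}\ge 2^{\delta_i N/2}$ choices of $E'\cap B_s$, hence one whose pattern is not yet in $\mathcal{Q}$; take that choice and add its pattern. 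An elementary estimate shows $r\ge 2^{i/2}\ge 2^{\delta_i N/2}$ once $i$ is large (it reduces to $\delta_i^2\ge 2i\cdot 2^{-i/2}$, which holds because $\delta_i$ decays only like $1/\log i$). Thus the procedure ends with $\abs{\mathcal{Q}}\ge 2^{i/2}$, and assigning $E'\cap B_s$ arbitrarily for the remaining subintervals we obtain a set $E'\cap I\subset E\cap I$ for which $1_{E'\cap I}$ has at least $2^{i/2}$ distinct factors of length $N$ occurring inside $I$.

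\emph{Step 3 (assembly).} Choose recursively a sequence $i_1<i_2<\cdots$ of rich indices, with $i_j$ so large that the estimate of Step~2 applies and that $2^{i_j/2}>N_j^{\,j}$, where $N_j:=\lceil i_j/\delta_{i_j}\rceil$; let $E':=\bigcup_j\bigl(E'\cap I_{i_j}\bigr)$, using the block-local choices from Step~2. The blocks $I_{i_j}$ are pairwise disjoint, so $E'\subset E$. Any length-$N_j$ factor of $1_{E'}$ supported in $I_{i_j}$ coincides with the corresponding factor of $1_{E'\cap I_{i_j}}$, hence $p_{1_{E'}}(N_j)\ge 2^{i_j/2}>N_j^{\,j}$; since $N_j\to\infty$, this shows that $p_{1_{E'}}$ is not $O(N^C)$ for any $C$, so Theorem~\ref{thm:subword-complexity} implies that $E'$ is not a generalised polynomial set. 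As $E$ is generalised polynomial with $d(E)=0$, the theorem follows. The crux of the argument is Step~2: a single fixed subset of $E$ (not merely a family of subsets) must realise exponentially many distinct windows at scale $N$, and the greedy assignment achieves this precisely when each block is long enough relative to $N$ and $\delta_i$ --- which is exactly why Theorem~\ref{thm:slow-decay} has to be fed an $f$ that decays no faster than, roughly, $1/\log\log N$.
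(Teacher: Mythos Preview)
Your proposal does not prove Theorem~\ref{thm:slow-decay} at all: your very first sentence \emph{invokes} Theorem~\ref{thm:slow-decay} as a black box (``obtain $E$ as an output of Theorem~\ref{thm:slow-decay}''), which would be circular. What you have written is a proof of Theorem~\ref{thm:non-hereditary} (the existence of a non-hereditary zero-density generalised polynomial set), not of the stated result. Note also that in the paper Theorem~\ref{thm:slow-decay} is not proved but quoted from \cite[Prop.~8.12]{AdamczewskiKonieczny}, so there is no proof here to compare against.

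If one reads your argument as an alternative proof of Theorem~\ref{thm:non-hereditary}, it is essentially sound and close in spirit to the paper's route via Proposition~\ref{prop:Non:A:p-is-large}: both feed a slowly decaying $f$ into Theorem~\ref{thm:slow-decay}, locate many length-$N$ windows in which $E$ is dense enough, and then choose subsets so as to realise super-polynomially many distinct $\{0,1\}^N$-patterns, contradicting Theorem~\ref{thm:subword-complexity}. The paper organises the windows along a single explicit sequence $N_L$ and uses pigeonhole to find many windows with \emph{identical} trace $A\subset[0,L)$, then picks distinct subsets $A_j\subset A$; you instead work dyadically, allow the traces to differ, and use a greedy assignment to avoid repeating patterns. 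Both work. One slip: your chain ``$r\ge 2^{i/2}\ge 2^{\delta_i N/2}$'' is misstated, since $N=\lceil i/\delta_i\rceil$ gives $\delta_i N\ge i$ and hence $2^{\delta_i N/2}\ge 2^{i/2}$; what you actually need (and have) is $\min(r,2^{\delta_i N/2})\ge 2^{i/2}$, which follows from $r\ge 2^{i/2}$ together with $2^{\delta_i N/2}\ge 2^{i/2}$.
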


The following consequence of Theorem \ref{thm:slow-decay}, juxtaposed with Theorem \ref{thm:subword-complexity}, will almost immediately yield a proof of Theorem \ref{thm:non-hereditary}.

\begin{proposition}\label{prop:Non:A:p-is-large}
	Let $h \colon \NN \to [0,1]$ be a sequence with $h(L) \to 0$ as $L \to \infty$. There exists a generalised polynomial set $E \subset \ZZ$ with $d(E) = 0$ and a subset $F \subset E$ such that $p_{1_F}(L) \geq 2^{h(L)L}$ for all $L \in \NN$.
\end{proposition}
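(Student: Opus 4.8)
The plan is to apply Theorem \ref{thm:slow-decay} with a carefully chosen slowly-decaying function, and then inside the resulting density-zero generalised polynomial set $E$ to find a subset $F$ realising many distinct length-$L$ words by a counting argument. First I would fix a function $f \colon \NN \to [0,1]$ with $f(N) \to 0$ and $f(N) \geq h(N)$; to get the extra room needed below I would in fact take $f$ to decay a bit more slowly than $h$, e.g.\ $f(N) = \min\{1, 2h(N) + 1/\log(N+2)\}$ or simply $f = \sqrt{h'}$ for a suitable majorant $h' \geq h$, arranged so that $f(N) \to 0$ but $f(N) N - h(N) N \to \infty$. Applying Theorem \ref{thm:slow-decay} to $f$ produces a generalised polynomial set $E \subset \ZZ$ with $d(E) = 0$ and $\abs{E \cap [0,N)} \geq f(N) N$ for all $N$.

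Next I would fix $L \in \NN$ and count, over a range of shifts $m$, how many distinct length-$L$ windows $(1_E(m), 1_E(m+1), \dots, 1_E(m+L-1))$ occur; more precisely I want to choose $F \subset E$ so that many patterns \emph{inside} $E$ become realisable. The key observation is that if $E$ contains at least $k$ elements in an interval $[0,N)$ with $N$ not too large compared to $L$, then by averaging there is a window of length $L$ containing a controlled number of elements of $E$, and by choosing $F$ to agree with $E$ on a sparse sub-collection one can realise all $2^{(\text{number of }E\text{-points in the window})}$ sign patterns supported on that window. Concretely, partition $[0,N)$ into $\lfloor N/L \rfloor$ consecutive blocks of length $L$; since $E$ has $\geq f(N)N$ points in $[0,N)$, some block contains $\geq f(N) L$ points of $E$ (for $N$ comparable to a fixed multiple of $L$, say $N = L$, this already gives $\geq f(L) L$ points of $E$ in a single length-$L$ window). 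Taking $F$ to be the union over such good windows (one per scale $L$, translated to be disjoint) of the full intersection $E \cap (\text{window})$ — or rather letting $F$ range over all subsets of $E$ restricted to that window — yields at least $2^{f(L)L} \geq 2^{h(L)L}$ distinct length-$L$ factors of $1_F$, since distinct subsets of the $\geq f(L)L$ points of $E$ in that window give distinct windows of $1_F$.

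The one genuine subtlety is that $F$ must be a \emph{single fixed} subset of $E$, not a family depending on $L$, while the bound $p_{1_F}(L) \geq 2^{h(L)L}$ must hold \emph{simultaneously} for all $L$. To handle this I would, for each $L$, locate inside $E$ a length-$L$ window $W_L$ containing $\geq f(L)L$ points of $E$, chosen so that the windows $W_L$ occur at pairwise well-separated (in fact, rapidly increasing) positions in $\ZZ$; this is possible because $E$ has positive upper density on arbitrarily long intervals (indeed $\abs{E \cap [0,N)} \geq f(N)N$ with $f(N)N \to \infty$), so after using up a finite initial segment for $W_1, \dots, W_{L-1}$ one still finds enough mass of $E$ further out to house $W_L$. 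Then I would define $F$ by prescribing $1_F$ arbitrarily on each $W_L$ (using the separation to make these prescriptions independent) so that the $2^{\ge f(L)L}$ patterns on $W_L$ are all attained as factors of $1_F$, and setting $1_F = 0$ (or $1_F = 1_E$) off $\bigcup_L W_L$. Since $F \subset E$ and every length-$L$ factor supported within $W_L$ appears in $1_F$, we get $p_{1_F}(L) \ge 2^{f(L)L} \ge 2^{h(L)L}$. The main obstacle is precisely this bookkeeping — ensuring the windows $W_L$ are disjoint and far enough apart that the local pattern counts multiply (or at least each survives) without interfering — but it is a routine consequence of the divergence $f(N)N \to \infty$ guaranteed by our choice of $f$ together with Theorem \ref{thm:slow-decay}.
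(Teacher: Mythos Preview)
There is a genuine gap. Your plan locates, for each $L$, a \emph{single} length-$L$ window $W_L$ containing many points of $E$, and then claims that by ``prescribing $1_F$ arbitrarily on $W_L$'' you realise $2^{\geq f(L)L}$ distinct length-$L$ factors of $1_F$. But $F$ is a fixed set: the restriction of $1_F$ to any one window of length $L$ is a \emph{single} word of length $L$, so a single window $W_L$ contributes at most one length-$L$ factor at that position (and at most $O(L)$ factors if you count overlapping shifts), not exponentially many. To force $p_{1_F}(L) \geq 2^{h(L)L}$ you need, for each $L$, at least $2^{h(L)L}$ \emph{different} windows in which the factors of $1_F$ are pairwise distinct, and on each of these windows $F$ must still be a subset of $E$.

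The paper's proof supplies exactly this missing idea. For each $L$ it first arranges (via a suitable choice of $f$ in Theorem~\ref{thm:slow-decay} and a growth condition on the partition points $N_L$) that among the $M_L$ consecutive length-$L$ blocks inside $[N_{L-1},N_L)$ there are at least $2^{(1+h(L))L}$ blocks each containing $\geq h(L)L$ points of $E$. A pigeonhole step then extracts $H = 2^{h(L)L}$ among these on which the pattern $E \cap (\text{block})$ is \emph{identical} up to translation, say equal to a common set $A \subset [0,L)$ with $\abs{A} \geq h(L)L$. One then deletes from $E$ a \emph{different} subset $A_j \subset A$ on each of the $H$ blocks, producing $H$ pairwise distinct length-$L$ words, all of which are subwords of the resulting $1_F$ and all supported inside (a translate of) $A$, so $F \subset E$ is preserved. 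The disjointness of the intervals $[N_{L-1},N_L)$ guarantees that the modifications made at different scales $L$ do not interfere. Your outline has the right overall architecture (slow-decay set, scale-by-scale windows, disjoint placement), but it misses this counting-and-pigeonhole step that produces exponentially many windows with a common $E$-pattern.
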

\begin{proof}
	Replacing $h(L)$ with $\ceil{h(L)L}/L$, we may freely assume that $h(L)L$ is an integer for all $L \in \NN$.
	Let $M_L, N_L$ be sequences of integers satisfying $N_0 = M_0 = 0$ and, for $L \geq 1$,
	\begin{equation}\label{eq:Non:A:N-def}
		N_{L} := N_{L-1} + L \cdot M_L, \qquad M_L \geq \frac{N_{L-1}+ 2^{(1+h(L))L}L }{(h(L)-h(L)^2)L}.
	\end{equation}
		
		Let $f \colon \NN \to [0,1]$ be a sequence satisfying 
\begin{equation}\label{eq:Non:A:f-ass}
 	f(N_L) \geq 2h(L) - h(L)^2 \qquad \text{and} \qquad f(N) \to 0 \text{ as } N \to \infty.
\end{equation}
Let $E$ be a set whose existence is asserted in Theorem \ref{thm:slow-decay}. We will construct $F$ as the intersection of a descending sequence of sets $E_L$, where $E_0 := E$. Pick a positive integer $L$. We can decompose the interval $[0,N_L)$ as
	\[ [0,N_L) = [0,N_{L-1}) \cup \bigcup_{m=0}^{M_L-1} [N_{L-1}+ mL,N_{L-1}+(m+1)L). \]
	Let $M_L^+$ denote the number of integers $m \in [0, M_L)$ such that 
	\[
		\abs{ E \cap [N_{L-1}+ mL,N_{L-1}+(m+1)L)} \geq h(L)L,
	\]
	Then 
	\[
		f(N_L)N_L \leq \abs{ E \cap [0,N_L)} \leq N_{L-1} + M_{L}^+ L + M_{L} h(L)L.
	\]
	Combined with \eqref{eq:Non:A:N-def} and \eqref{eq:Non:A:f-ass}, this implies that
	\[
		M_{L}^+ \geq 2^{(1+h(L))L}.
	\]
	Put $H:= 2^{h(L)L}$. Applying the pigeonhole principle, we conclude that there exists a set $A \subset [0,L)$ with $\abs{A} \geq h(L)L$ and positions $0 \leq m_0 < m_1 < \dots < m_{H-1} < M_{L}$ such that 
	\[
		E \cap [N_{L-1}+ m_j L,N_{L-1}+(m_j+1)L) = N_{L-1} + m_j L + A \quad \text{ for all } 0 \leq j < H.
	\]	
	Let $A_0,A_1,\dots, A_{H-1}$ be $H$ different subsets of $A$. Put $B_j := A \setminus A_j$ and
	\[
		E_L := E_{L-1} \setminus 
		\bigcup_{j=0}^{H-1} \bra{  N_{L-1} + m_jL + A_j}.
	\]
	In particular, for each $0 \leq j < H$ we have
	\[
	\bra{ E_{L} - (N_{L-1} + m_jL) } \cap [0,L) = B_j.
	\]
	In particular, $1_{B_j}$ is a subsequence of $1_{E_L}$, and thus the indicator function of $E_L$ has at least $2^{h(L)L}$ length-$L$ subsequences, all of which appear at positions between $N_{L-1}$ and $N_L$.
	We put $F := \bigcap_{L =0}^\infty E_L$. It follows from the construction above that the subword complexity of $F$ satisfies $p_{1_F}(L) \geq 2^{h(L)L}$ for all $L \in \NN$.
\end{proof}

\begin{proof}[Proof of Theorem \ref{thm:non-hereditary}]
Let $h(L) := 1/\sqrt{L}$, and let $E,F$ be some sets satisfying the claim of  Proposition \ref{prop:Non:A:p-is-large}. Then $F$ is not a generalised polynomial set by Theorem \ref{thm:subword-complexity}.
\end{proof}
\newcommand{\etalchar}[1]{$^{#1}$}

\end{document}